\theoremstyle{definition}
\newtheorem{theorem}{Theorem}[section]
\newtheorem{lemma}[theorem]{Lemma}
\newtheorem{proposition}[theorem]{Proposition}
\newtheorem{definition}[theorem]{Definition}
\newtheorem{example}[theorem]{Example}
\newcommand{\End}{\text{End}}
\newcommand{\OO}{\mathcal{O}}
\newcommand{\Aut}{\mathrm{Aut}}
\newcommand{\ZZ}{\mathbb{Z}}
\newcommand{\Nrd}{{\operatorname{Nrd}}}
\newcommand{\ec}[1]{(#1)}
\renewcommand{\k}{\kappa}
\renewcommand{\hat}[1]{\widehat{#1}}
\newcommand{\QQ}{\mathbb{Q}}
\newcommand{\notinsubfile}[1]{}
\newcommand{\Fp}{{\mathbb{F}_p}}
\newcommand{\Fpp}{{\mathbb{F}_{p^2}}}
\newcommand{\Fpbar}{{\overline{\mathbb{F}}_{p}}}
\newcommand{\FpbarGraph}{{\mathcal{G}^\ell_{\Fpbar}}}
\newcommand{\EGSet}{{|\mathcal{S}_N|}}
\newcommand{\EGCat}{{\mathcal{S}_N}}
\newcommand{\Hom}{\text{Hom}}
\newcommand{\EpCat}{{\mathcal{E}^N_{p,\ell}}}
\newcommand{\hh}[1]{\mathcal{h}_{#1}}
\title[Supersingular Curves with Level Structure]{Adding Level Structure to Supersingular Elliptic Curve Isogeny Graphs}
\author{Sarah Arpin}
\address{Mathematics Institute \\
Universiteit Leiden\\
Leiden, The Netherlands}
\email{sarpinmath@gmail.com}
\date{\today}
\subjclass[]{11G20,11T71}
\keywords{supersingular, level structure, elliptic curves, isogeny graphs}
\thanks{The author was supported by NSF-CAREER CNS-1652238 (P.I. Katherine E. Stange)}
\begin{document}

\maketitle


\begin{center}
\begin{abstract}
In this paper, we add the information of level structure to supersingular elliptic curves and study these objects with the motivation of isogeny-based cryptography. Supersingular elliptic curves with level structure map to Eichler orders in a quaternion algebra, just as supersingular elliptic curves map to maximal orders in a quaternion algebra via the classical Deuring correspondence. We study this map and the Eichler orders themselves. We also look at isogeny graphs of supersingular elliptic curves with level structure, and how they relate to graphs of Eichler orders.
\end{abstract}
\end{center}

\section{Introduction}

Supersingular elliptic curve isogeny graphs have a rich underlying mathematical structure, and yet they appear to be difficult to navigate in a computational sense. The recent appearance of these graphs in cryptographic protocols which aim to be safe against classical and quantum attacks has led to a resurgence of interest in the mathematical properties of supersingular elliptic curves. In this work, we study supersingular elliptic curves endowed with level-$N$ structure:
\begin{definition}[Supersingular elliptic curve with level-$N$ structure, see Definition~\ref{def:SSECsWithLS}]\label{def:SSECsWithLS_intro}
Let $p$ be a prime and $N$ an integer coprime to $p$.
Let $\EGSet$ denote the set of pairs $(E,G)$, up to isomorphism, where $E$ is a supersingular elliptic curve over $\Fpbar$ and $G\subseteq E[N]$ is a cyclic subgroup of order $N$.
\end{definition}

To better understand supersingular elliptic curves with level-$N$ structure, we study maps on these curves, in particular their endomorphisms: 
\begin{definition}[Endomorphisms of $(E,G)$, see Definition~\ref{def:OO(E,G)}]\label{def:OO(E,G)_intro}
As a subring of $\End(E)$, we define the ring of endomorphisms of the pair $(E,G)\in\EGSet$ as follows:
\[\mathcal{O}(E,G) := \{\alpha\in \text{End}(E): \alpha(G)\subseteq G\}.\]
\end{definition}

The endomorphism rings of supersingular elliptic curves are maximal orders in a quaternion algebra. This correspondence is an explicit equivalence of categories called the Deuring correspondence \cite{Deu41}.  Research into this correspondence has greatly expanded our understanding of supersingular elliptic curves, and in this work we extend these tools to supersingular elliptic curves with level-$N$ structure. In Theorem~\ref{thm:O(E,G)_is_EO}, restated below, we prove that the endomorphism rings $\OO(E,G)$ are Eichler orders of level-$N$ in the quaternion algebra $\End(E)\otimes_\ZZ\QQ$. 
\begin{theorem}[See Theorem~\ref{thm:O(E,G)_is_EO}]\label{thm:O(E,G)_is_EO_intro}
$\OO(E,G)$ is isomorphic to an Eichler order of level $|G| = N$. 
\end{theorem}

The correspondence between isomorphism classes of supersingular elliptic curve endomorphism rings and isomorphism classes of maximal orders in a quaternion algebra is not a priori an injective map. The failure of injectivity comes from curves which are related by the $p$-power Frobenius map, as these curves have isomorphic endomorphism rings. In the case of supersingular elliptic curves with level-$N$ structure the failure of injectivity is more complicated, but also reveals more information about the structure of the supersingular elliptic curves. In this case, we restrict to $N$ squarefree and we study the failure of injectivity via involutions which we define on the isomorphism classes in $\EGSet$. This complete description can be found in Section~\ref{sec:vertices_to_EOs}, culminating in Theorem~\ref{thm:fiber_aboveO(E,G)} which completely describes the failure of injectivity of the map from $\EGSet$ to isomorphism classes of Eichler orders of level-$N$. In addition to understanding the fiber size of the map taking supersingular elliptic curves with level-$N$ structure to their Eichler order endomorphism rings, we also understand the structure on the quaternion side which dictates this fiber size. We restate a version of Theorem~\ref{thm:fiber_aboveO(E,G)} below:

\begin{theorem}
Fix an Eichler order $\mathcal{O}$ of level $N = q_1\cdots q_r$ squarefree and coprime to $p$. The number of isomorphism classes of supersingular elliptic curves with level-$N$ structure $(E,G)\in\EGSet$ with endomorphism ring isomorphic to $\mathcal{O}$ is equal to the size of the two-sided ideal class group of $\mathcal{O}$, which equals $2^k$ for some $k\in\{0,1,...,r+1\}$.
\end{theorem}

With all of this structure understood, we prove a formal equivalence of categories for supersingular elliptic curves with level-$N$ structure, in the style of the classical Deuring correspondence: 
\begin{theorem}[Equivalence of Categories, see Theorem~\ref{thm:cats_equiv}]\label{thm:cats_equiv_intro}
Fix a supersingular elliptic curve $E$ defined over $\overline{\mathbb{F}}_p$ and a cyclic subgroup $G\subset E[N]$ of squarefree order $N$, coprime to $p$. There is a contravariant functor $\hh{(E,G)}$ from the category $\mathcal{S}_N$ of supersingular elliptic curves with level-$N$ structure to the category $\mathcal{LM}$ of invertible left $\mathcal{O}(E,G)$-modules. This functor defines an equivalence of categories.
\end{theorem}

Finally, we consider maps between supersingular elliptic curves with level-$N$ structure in the form of $\ell$-isogeny graphs, which are defined and explored in Section~\ref{sec:Graph}. We briefly recap some highlights of isogeny-based cryptography as motivation for studying this isogeny graph variant. Supersingular elliptic curve $\ell$-isogeny graphs were first proposed for use in post-quantum cryptography in 2006 by Charles, Goren, and Lauter \cite{CGL06} with a hash function based on walks in the $\ell$-isogeny graph. This was swiftly followed by papers of Rostovtsev-Stolbunov and Couveignes whose work (which was actually from 1997, but not made public at that time) took the perspective of using a class group action to walk the $\ell$-isogeny graph. These works formed the basis for CSIDH \cite{CSIDH}, a key exchange protocol based on isogenies of supersingular elliptic curves over $\Fp$. Perhaps the most well-known isogeny-based cryptographic protocol was SIKE \cite{SIKE} (a variant of SIDH), a key exchange protocol which made public the images of certain torsion subgroup generators under certain isogenies. 
In 2022, Castryck-Decru \cite{CastryckDecru}, Maino-Martindale \cite{MainoMartindale,MainoMartindale2}, and Robert \cite{Robert} were able to use this information to break the protocol and SIDH in general. This break has shaken trust in isogeny-based cryptography, but the reliance on the extra torsion point information was key to the break; this highlights the importance of understanding isogeny graphs which are enhanced by additional structure. Another protocol of mathematical interest is OSIDH \cite{OSIDH}, in which the supersingular elliptic curves are endowed with the information of an endomorphism in the form of an ``orientation". These graphs have been studied extensively \cite{WIN5I, WIN5II, Bweso_orientations}, and in future work we hope to explore the connection between orientations and level structure.

\subsection{Historical context of level structure}

The notion of extending the Deuring correspondence to a supersingular $\ell$-isogeny graph with level structure is not new. However, there has yet to appear a detailed description of an equivalence of categories for supersingular elliptic curves with level structure. The idea has been called ``folklore" \cite[Section 4]{SqiSign}: Papers have been written about related concepts in the context of modular forms (Ribet), or about different choices of level structure (Goren--Kassaei \cite{goren_kassaei}, with a choice of torsion point, Roda's thesis \cite{RodaThesis} with full level structure). In this paper, the author hopes to provide the details of theorems that many have suspected, as well as some which are perhaps less expected. To begin this work, we provide a brief overview of what we have found in the literature to date.

Voight \cite[Remark 42.3.10]{QASVOIGHT} notes that a generalization of the Deuring correspondence is possible through mild adjustments.
Ribet, in \cite{Ribet1989}, also notes that the Deuring correspondence as phrased by Mestre--Osterl\'e can be extended to ``oriented" Eichler orders, but does not prove the correspondence explicitly.

Eichler \cite{Eichler01} \cite{Eichler2} and Pizer \cite{PizerTypeNumbs} provide the foundational theory of Eichler orders. 

More recently, work of Goren and Kassaei \cite{goren_kassaei} takes the perspective of Hecke operators to prove properties of the supersingular $\ell$-isogeny graph with the added level-$N$ structure of a choice of $N$-torsion point.

The SqiSign authors \cite{SqiSign} have most recently published a version of the Eichler order Deuring correspondence, motivated by commutative isogeny diagrams of supersingular elliptic curves: Under suitable conditions for $p$, the authors prove a bijection between the class set of a fixed Eichler order of squarefree level $N$ and the set of all $N$-isogenies between supersingular elliptic curves over $\Fpbar$. This bijection is essentially the same as the underlying bijection on objects of the equivalence of categories proved in Section~\ref{sec:category}.

Since the first appearance of this paper, Codogni and Lido \cite{CodogniLido} have continued to understand the spectral properties of isogeny graphs with level structure. Page and Wesolowski \cite{PageWeso} introduced a framework to study a generalized notion of level structure.

\subsection{Acknowledgments}
The author is deeply indebted to her advisor, Katherine E. Stange, for continuous guidance on this paper from the very start. The author would like to thank John Voight for promptly answering emails to provide very helpful exposition and clarification. Many thanks to David Grant and Christelle Vincent for their feedback on the thesis-version of this work. Additional thanks to Leo Herr, Soumya Sankar, and Jaap Top for helpful discussions. Many thanks to the reviewer for their encouraging feedback and helpful suggestions, especially in Sections~\ref{sec:vertices_to_EOs} and \ref{sec:category}.

\subsection{Conventions}
In this paper, $p$ is a (cryptographic size) prime, $\ell$ is a small prime, and $N$ is an integer which is coprime to $p\ell$. In Sections~\ref{sec:vertices_to_EOs} and \ref{sec:category}, we restrict $N$ to be squarefree. In Section~\ref{sec:countingNisogconjs}, we require $N$ to be prime.

\section{Background}\label{sec:background}
Let $B_{p,\infty}$ denote the unique (up to isomorphism) quaternion algebra ramified precisely at $p$ and infinity.

\subsection{The Classical Deuring Correspondence}\label{ssec:bckgrnd_classicDeuring}
Deuring provides a correspondence between the endomorphism rings of supersingular elliptic curves over $\Fpbar$ and maximal orders in the appropriate quaternion algebra. The connection to the quaternions provides an important avenue for studying the structure of the isogeny graphs.

\begin{theorem}[Deuring Correspondence]
Fix a maximal order $R$ of the quaternion algebra $B_{p,\infty}$. There is a bijection between isomorphism classes of supersingular elliptic curves over $\Fpbar$ and the left class set of the maximal order $R$. 
\end{theorem}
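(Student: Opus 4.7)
The plan is to fix a supersingular elliptic curve $E_0$ over $\Fpbar$ whose endomorphism ring is isomorphic to $R$ (such an $E_0$ exists since, by the theory of supersingular reduction, every maximal order of $B_{p,\infty}$ arises as $\End(E)$ for some supersingular $E$; this fact is classical and can be cited). I would then define the candidate bijection
\[
[E] \;\longmapsto\; [\Hom(E, E_0)],
\]
where the right-hand side is the isomorphism class of $\Hom(E,E_0)$ regarded as a left $R$-module via precomposition. The first real step is to show that $\Hom(E,E_0)$ is a locally free left $R$-module of rank $1$, i.e.\ an invertible left $R$-module, and hence represents an element of the left class set of $R$. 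This uses the fact that $\Hom(E,E_0) \otimes \QQ$ is a rank-one left $B_{p,\infty}$-module (since any nonzero isogeny $E\to E_0$ gives an identification after tensoring with $\QQ$), together with the standard local-at-each-prime check for lattices in $B_{p,\infty}$.

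Next I would check that the assignment is well-defined and injective on isomorphism classes. Well-definedness is immediate since an isomorphism $E \cong E'$ induces an $R$-module isomorphism $\Hom(E,E_0) \cong \Hom(E',E_0)$. For injectivity, suppose $\Hom(E_1,E_0) \cong \Hom(E_2,E_0)$ as left $R$-modules. The plan is to use the contravariant functor $\Hom(-,E_0)$ together with duality: the double-dual $\Hom(\Hom(E,E_0),E_0)$ (taken appropriately with the $R$-module structure) recovers $E$ up to isomorphism, because an isogeny $\varphi\colon E_1 \to E_2$ inducing the given $R$-module isomorphism can be reconstructed as the dual of a generator of the Hom-module.

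For surjectivity, given a left ideal class $[I]$ of $R$, I would construct a supersingular elliptic curve $E_I$ whose Hom-module lies in the class of $I$ via the kernel-ideal construction: set
\[
E_0[I] \;=\; \bigcap_{\alpha \in I} \ker(\alpha) \;\subseteq\; E_0,
\]
which is a finite subgroup scheme, and let $E_I = E_0/E_0[I]$. The standard computation shows $\Hom(E_I, E_0) \cong I$ as left $R$-modules, and that replacing $I$ by $I\beta$ for $\beta \in B_{p,\infty}^\times$ produces an isomorphic elliptic curve, so the map is well-defined on ideal classes and surjective.

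The main obstacle, and the step I expect to require the most care, is the kernel-ideal construction in the surjectivity argument: one must verify that $E_0[I]$ is genuinely a finite subgroup scheme (which uses that $I$ is a locally principal left ideal of reduced norm prime to $p$, or handles the $p$-part via Dieudonn{\'e} theory in characteristic $p$), that the quotient $E_I$ is again supersingular, and that $\Hom(E_I,E_0)$ is naturally identified with $I$. Once these technical points are established, combining injectivity and surjectivity yields the bijection between isomorphism classes of supersingular elliptic curves over $\Fpbar$ and the left class set of $R$.
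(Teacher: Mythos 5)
The paper offers no proof of this statement: it is recalled purely as classical background (attributed to Deuring, with the basepoint-free form quoted from Kohel and the categorical form from Voight), so there is no internal argument to compare yours against. Your sketch is the standard proof --- fix $E_0$ with $\End(E_0)\cong R$, send $E$ to the invertible left $R$-module $\Hom(E,E_0)$, and get surjectivity from the kernel-ideal construction $E_0[I]=\bigcap_{\alpha\in I}\ker(\alpha)$ --- which is essentially the argument behind Theorem 42.3.2 of \cite{QASVOIGHT} that the paper cites, and it parallels the strategy the paper itself uses for the level-structure analogue in Section~\ref{sec:category}, where the functor $\hh{(E,G)}=\Hom(-,(E,G))$ is shown essentially surjective via the ideal-to-isogeny dictionary and fully faithful via an identification of Hom-sets with $I^{-1}I'$ (Lemma~\ref{lem:v_42.2.22}). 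Two refinements you would want in a complete write-up: first, your injectivity step via a ``double dual'' is the vaguest point --- the clean route is to prove the functor fully faithful, e.g.\ by establishing $\Hom(E_1,E_2)\cong I_2^{-1}I_1$ as in the paper's level-structure Lemma~\ref{lem:v_42.2.22} (or Voight's Lemma 42.2.22), which yields injectivity on isomorphism classes directly rather than by an ad hoc reconstruction of $\varphi$; second, in the kernel-ideal construction it is simplest to replace $I$ within its class by an integral ideal of reduced norm prime to $p$ (always possible), so that $E_0[I]$ is \'etale and no Dieudonn\'e-theoretic care is needed, and supersingularity of $E_I$ is automatic because $E_I$ is isogenous to $E_0$. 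With those adjustments your outline is correct and matches the standard (and the paper's implicitly intended) approach.
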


Deuring's original statement depends on a choice of maximal order $R$ in $B_{p,\infty}$, which is implicitly a choice of supersingular elliptic curve whose endomorphism ring is isomorphic to $R$. For every maximal order, the right orders of ideals in the left ideal class set of that order will run through all of the maximal orders of the quaternion algebra. 
In this way, one can think of mapping the supersingular elliptic curves over $\Fpbar$ to the maximal orders of $B_{p,\infty}$. The fibers of this map have either one or two elements, depending on the field of definition of the supersingular elliptic curve, or equivalently the size of the two-sided ideal class group of the maximal order. 
This perspective removes the necessity of an initial choice of maximal order, but it no longer describes a bijection: If $E$ is defined over $\Fpp\setminus\Fp$, then $E\not\cong E^p$, but $\End(E)\cong\End(E^p)$ map to the same maximal order of $B_{p,\infty}$. If $E$ is defined over $\Fp$, then $E\cong E^p$ and $\End(E)$ is the maximal order uniquely identified with the isomorphism class of $E$. 

Ribet \cite{Ribet1989} credits an unpublished manuscript of Mestre--Osterl\'e for this basepoint-free version of the Deuring Correspondence: He writes that Mestre--Osterl\'e take a perspective of ``oriented" maximal orders to achieve this result. The basepoint-free perspective is also how Kohel presents the Deuring correspondence in his thesis \cite{Kohel}:

\begin{theorem}[Theorem 44 \cite{Kohel}]\label{thm:bp-free-Deuring}
Given a maximal order of the quaternion algebra $B_{p,\infty}$, there exist one or two supersingular $j$-invariants over $\Fpbar$ such that the corresponding endomorphism ring is isomorphic to a maximal order of the given type.
\end{theorem}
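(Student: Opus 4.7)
The plan is to derive this result from the classical Deuring correspondence stated above, combined with an analysis of the two-sided ideal class group of a maximal order in $B_{p,\infty}$. Fix a maximal order $R_0 \subset B_{p,\infty}$ and a supersingular elliptic curve $E_0$ with $\End(E_0) \cong R_0$. The classical Deuring correspondence identifies the $\Fpbar$-isomorphism classes of supersingular elliptic curves with left ideal classes $[I]$ of $R_0$, and under this identification the endomorphism ring of the curve attached to $[I]$ is isomorphic, as a ring, to the right order of $I$ in $B_{p,\infty}$.

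Given a target maximal order $\OO' \subset B_{p,\infty}$, the next step is to count left ideal classes $[I]$ of $R_0$ whose right order is conjugate (equivalently, by Skolem--Noether, abstractly isomorphic) to $\OO'$. First, such a class exists: a connecting ideal between $R_0$ and $\OO'$ gives a left $R_0$-, right $\OO'$-ideal $J$. Every left ideal class with right order conjugate to $\OO'$ has a representative whose right order is \emph{exactly} $\OO'$, and any two such representatives differ by right-multiplication by a two-sided invertible $\OO'$-ideal. Quotienting by principal ideals, the fiber of the ``right-order-up-to-isomorphism'' map over $[\OO']$ becomes a torsor under the two-sided class group $\Pic(\OO')$, so the task reduces to computing $|\Pic(\OO')|$.

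The key computation is that $|\Pic(\OO')| \in \{1, 2\}$. Since $B_{p,\infty}$ is ramified only at $p$ and infinity, the only two-sided prime of $\OO'$ that can contribute nontrivially to the class group is the unique prime $\gP$ above $p$; it satisfies $\gP^2 = p\OO'$, so its class is at most $2$-torsion. Every two-sided invertible ideal is a power of $\gP$ times a principal ideal, so $\Pic(\OO')$ is cyclic of order $1$ or $2$. The main obstacle, and the geometric heart of the argument, is tying this algebraic dichotomy to the Frobenius: the class of $\gP$ corresponds to the $p$-power Frobenius isogeny, so the orbit of a left ideal class $[I]$ under $\Pic(\OO')$ pairs the associated curve $E$ with its Frobenius conjugate $E^p$. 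When $E$ is defined over $\Fp$, we have $E \cong E^p$ and $\gP$ is principal, yielding a single $j$-invariant whose endomorphism ring is isomorphic to $\OO'$; when $E$ is defined over $\Fpp \setminus \Fp$, we have $E \not\cong E^p$ and $\gP$ is nontrivial, yielding exactly two such $j$-invariants.
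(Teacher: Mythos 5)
This theorem is quoted from Kohel's thesis and the paper supplies no proof of its own, but your argument is correct and follows exactly the route sketched in the paragraph preceding the theorem: the Deuring correspondence together with the observation that the fiber of the right-order (type) map over the class of $\OO'$ is a torsor under the two-sided ideal class group of $\OO'$, which has order $1$ or $2$ because it is generated by the class of the unique two-sided prime $\gP$ above $p$ with $\gP^2 = p\OO'$, and whose triviality corresponds to $E\cong E^p$, i.e.\ $j(E)\in\Fp$. The only point to watch is that ``principal'' two-sided ideals must mean ideals $\OO'\beta$ with $\beta$ in the normalizer of $\OO'$; with that convention your torsor computation is exactly right.
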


Kohel also presents the basepoint dependent version of the Deuring Correspondence as a categorical equivalence \cite[Theorem 45]{Kohel}. In Section~\ref{sec:category}, we prove a categorical equivalence in the level structure context.

\subsection{Translating Isogenies to the Quaternion Algebra Side}\label{ssec:bckgrnd_isogenies}
Isogenies of supersingular elliptic curves also have a corresponding object in the quaternion algebra $B_{p,\infty}$. A thorough reference for the correspondence between isogenies and left ideals of a maximal order $\OO\cong \End(E)$ is described in detail in \cite[Section 42.2]{QASVOIGHT}. We briefly recall this theory here: suppose $\varphi:E\to E'$ is a separable isogeny between supersingular elliptic curves over $\Fpbar$. Let $I_\varphi$ be the left ideal of $\End(E)$ in $B_{p,\infty}$ which corresponds to $\varphi$ in the following way: 
\[\ker(\varphi) = \bigcap_{\alpha\in I_\varphi}\ker(\alpha).\]
The norm of $I_\varphi$ is equal to the degree of $\varphi$. The ideal $I_\varphi$ is also a right $\End(E')$ ideal, by the same theory.

\subsection{Embedding Multiple Endomorphism Rings in \texorpdfstring{$B_{p,\infty}$}{}}\label{ssec:bckgrnd_multipleendosembedding}
If one wishes to compare more than one supersingular elliptic curve over $\Fpbar$ to the corresponding maximal order in $B_{p,\infty}$, one must be careful to choose compatible maps into the same copy of $B_{p,\infty}$. A detailed discussion is found in \cite[Section 42.2]{QASVOIGHT}, and we provide a summary of the details which will be necessary for this paper. Fix a supersingular elliptic curve $E/\Fpbar$. The  endomorphism ring of $E$, $\End(E)$, is a maximal order in the quaternion algebra $B_{p,\infty}^E:=\End(E)\otimes_\ZZ\QQ\cong B_{p,\infty}$. All supersingular elliptic curves are isogenous. To map the endomorphism ring $\End(E')$ of another supersingular elliptic curve $E'/\Fpbar$ into $B_{p,\infty}^E$, we choose an isogeny $\varphi:E\to E'$. As described above, $\varphi$ corresponds to a left ideal $I$ of the maximal order $\End(E)$. Any left ideal in the class of $I$ corresponds to an isogeny $E\to E'$. We map the endomorphisms of $E'$ into $B_{p,\infty}^E$ via 
\begin{equation}\label{eq:EndoRingToMaxOrder}
\begin{split}
    \End(E')&\hookrightarrow B_{p,\infty}^E\\
    \beta &\mapsto \frac{1}{\deg\varphi}(\hat{\varphi}\beta\varphi).
\end{split}
\end{equation}
The image of $\End(E')$ is the maximal order of $B_{p,\infty}^E$ which is the right order of $I$. In this way, we are viewing the endomorphism rings of $E$ and $E'$ inside the same copy of $B_{p,\infty}$, namely $B_{p,\infty}^E$ as defined above. Note that this map depends on a choice of $\varphi$. If instead we had chosen an isogeny $\varphi':=\eta\circ\varphi:E\to E'$, where $\eta\in\Aut(E')$, the image of $\End(E')$ in $B_{p,\infty}^E$ would remain the same, but the map itself would be different.

\subsection{\texorpdfstring{$\Fp$}{}-Endomorphism Rings}\label{ssec:bckgrn_FpEndoRing}
While computing the full endomorphism ring of a given supersingular elliptic curve is generically a hard problem, this is not the case for computing the subset of endomorphisms which are defined over $\Fp$, for curves which are defined over $\Fp$. Delfs and Galbraith \cite{DelGal01} show that $\End_{\Fp}(E)\cong \ZZ[\sqrt{-p}]$ or $\ZZ[\frac{1 + \sqrt{-p}}{2}]$, depending on the congruence class of $p$ modulo 4 and the action of the $p$-power Frobenius on the two-torsion points of $E$. We condense and re-state this theorem below for ease of reference: 
\begin{proposition}[Section 2{\cite{DelGal01}}]
Let $E/\Fp$ be a supersingular elliptic curve, and let $\pi_p$ denote the $p$-power Frobenius map on $E$. If $p\equiv 1\pmod{4}$, then $\End(E)\cong\ZZ[\sqrt{-p}]$. If $p\equiv 3\pmod{4}$, then there are two possibilities for $\End_{\Fp}(E)$: if $\pi_p(P) = P$ for all $P\in E[2]$, then $\End_{\Fp}(E)\cong\ZZ[\frac{1+ \sqrt{-p}}{2}]$. Otherwise, $\End_\Fp(E)\cong\ZZ[\sqrt{-p}]$.
\end{proposition}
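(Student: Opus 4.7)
The plan is to identify $\End_\Fp(E)$ as an order in the imaginary quadratic field $\QQ(\sqrt{-p})$ containing $\ZZ[\sqrt{-p}]$, enumerate the possible such orders, and then in the ambiguous case supply a concrete criterion detecting which order occurs.

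First I would observe that since $E/\Fp$ is supersingular, $\#E(\Fp) = p+1$, so the $p$-power Frobenius satisfies $\pi_p^2 + p = 0$. Hence $\ZZ[\pi_p] \cong \ZZ[\sqrt{-p}]$ embeds into $\End_\Fp(E)$. An endomorphism is defined over $\Fp$ exactly when it commutes with $\pi_p$, so $\End_\Fp(E)$ is the centralizer of $\pi_p$ in $\End(E)$. Passing to $B_{p,\infty}^E = \End(E)\otimes_\ZZ\QQ$, the centralizer of $\pi_p$ is the quadratic subfield $\QQ(\pi_p) = \QQ(\sqrt{-p})$, and therefore $\End_\Fp(E) = \End(E) \cap \QQ(\sqrt{-p})$ is an order in $\QQ(\sqrt{-p})$ containing $\ZZ[\sqrt{-p}]$.

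Next I would classify orders in $\QQ(\sqrt{-p})$ containing $\ZZ[\sqrt{-p}]$. If $p \equiv 1 \pmod 4$, then $-p \equiv 3 \pmod 4$, so $\ZZ[\sqrt{-p}]$ is already the maximal order and $\End_\Fp(E) \cong \ZZ[\sqrt{-p}]$ is forced. If $p \equiv 3 \pmod 4$, then $-p \equiv 1 \pmod 4$ and the maximal order of $\QQ(\sqrt{-p})$ is $\ZZ[\frac{1+\sqrt{-p}}{2}]$, which contains $\ZZ[\sqrt{-p}]$ with index $2$; these are the only two orders containing $\ZZ[\sqrt{-p}]$, so $\End_\Fp(E)$ must be one of them.

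To distinguish the two cases when $p \equiv 3 \pmod 4$, I would test whether $\frac{1+\pi_p}{2} \in \End(E)$. Since $[2]$ is a separable isogeny (as $p \neq 2$), $1+\pi_p$ is divisible by $2$ in $\End(E)$ if and only if $E[2] \subseteq \ker(1+\pi_p)$. For $P \in E[2]$, $(1+\pi_p)(P) = P + \pi_p(P)$ vanishes iff $\pi_p(P) = -P$, and since $P \in E[2]$ this is the same as $\pi_p(P) = P$. Moreover, if the quotient $\frac{1+\pi_p}{2}$ exists in $\End(E)$, it is a polynomial in $\pi_p$ and hence commutes with $\pi_p$, so it lies in $\End_\Fp(E)$. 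Combining these, $\End_\Fp(E) \cong \ZZ[\frac{1+\sqrt{-p}}{2}]$ iff $\pi_p$ fixes $E[2]$ pointwise, and $\End_\Fp(E) \cong \ZZ[\sqrt{-p}]$ otherwise.

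The main subtle step is the divisibility criterion in the last paragraph — justifying via separability of $[2]$ that $(1+\pi_p)/2$ exists in $\End(E)$ exactly when $1+\pi_p$ kills $E[2]$ — together with the commutant computation identifying $\End_\Fp(E)$ with $\End(E) \cap \QQ(\sqrt{-p})$. Once these two ingredients are in place, the proposition follows from the standard classification of orders in $\QQ(\sqrt{-p})$.
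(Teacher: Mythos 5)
The paper itself offers no proof of this proposition --- it is quoted as a restatement of Delfs--Galbraith \cite{DelGal01}, Section 2 --- so your proposal is compared against the standard argument there, and it matches it in substance and is correct. The chain $\pi_p^2=-p$ (valid since $p\geq 5$ forces trace zero, which is covered by the paper's convention that $p$ is a cryptographic-size prime), the identification of $\End_\Fp(E)$ with the centralizer of $\pi_p$, hence with $\End(E)\cap\QQ(\pi_p)$, the classification of the (at most two) orders of $\QQ(\sqrt{-p})$ containing $\ZZ[\sqrt{-p}]$ according to $p\bmod 4$, and the separability/factorization criterion showing $(1+\pi_p)/2\in\End(E)$ exactly when $\pi_p$ fixes $E[2]$ pointwise, are all sound; the only cosmetic remark is that the statement's ``$\End(E)\cong\ZZ[\sqrt{-p}]$'' in the $p\equiv 1\pmod 4$ case should read $\End_\Fp(E)$, as you implicitly and correctly interpreted it.
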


\section{Elliptic Curves with Level Structure and Their Endomorphism Rings}

\begin{definition}\label{def:SSECsWithLS}
Let $p$ be a prime and $N$ an integer coprime to $p$.
Let $\EGSet$ denote the set of pairs $(E,G)$, up to equivalence $\sim$, where $E$ is a supersingular elliptic curve over $\Fpbar$ and $G\subseteq E[N]$ is a cyclic subgroup of order $N$. Two pairs $(E_1,G_1), (E_2,G_2)$ are equivalent under the equivalence relation $\sim$ if there exists an isomorphism $\rho:E_1\to E_2$ such that $\rho(G_1) = G_2$. The pairs in $\EGSet$ are \textit{supersingular elliptic curves with level-$N$ structure}. 
\end{definition}

We define the notion of an endomorphism ring of a pair in $\EGSet$ (Section~\ref{ssec:OO(E,G)}) and describe the structure of this endomorphism ring as an object in the quaternion algebra $B_{p,\infty}$ (Section~\ref{ssec:EOS}).

\subsection{Endomorphism Rings \texorpdfstring{$\OO(E,G)$}{}}\label{ssec:OO(E,G)}

\begin{definition}[$\mathcal{O}(E,G)$]\label{def:OO(E,G)}
As a subring of $\End(E)$, we define the ring of endomorphisms of the pair $(E,G)\in\EGSet$ as follows:
\[\mathcal{O}(E,G) := \{\alpha\in \text{End}(E): \alpha(G)\subseteq G\}.\]
\end{definition}

Since $\EGSet$ is a set of equivalence classes, we need to check that $\OO(\cdot,\cdot)$ is well-defined on these equivalence classes.

\begin{proposition}
Let $(E,G),(F,H)\in\EGSet$ and suppose that there exists an isomorphism $\eta:E\to F$ such that $\eta(G) = H$. Then, the map $\OO(F,H)\to\OO(E,G)$ defined $\alpha\mapsto \eta^{-1}\alpha\eta$ is an isomorphism. 
\end{proposition}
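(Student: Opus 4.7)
The plan is to verify three things in order: that the map is well-defined (i.e., lands in $\OO(E,G)$), that it is a ring homomorphism, and that it is bijective. None of these should be hard; the only step with any content is checking the compatibility with level structure, which is a direct unpacking of the definitions.

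First I would check that conjugation by $\eta^{-1}$ carries $\OO(F,H)$ into $\OO(E,G)$. Given $\alpha \in \End(F)$, the map $\eta^{-1}\alpha\eta$ is visibly an endomorphism of $E$ because $\eta$ is an isomorphism $E \to F$. The level-structure condition is the only new point: if $\alpha(H) \subseteq H$, then using $\eta(G) = H$ and hence $\eta^{-1}(H) = G$, we get
\[
(\eta^{-1}\alpha\eta)(G) = \eta^{-1}\alpha(\eta(G)) = \eta^{-1}\alpha(H) \subseteq \eta^{-1}(H) = G,
\]
so $\eta^{-1}\alpha\eta \in \OO(E,G)$ as required.

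Next I would verify the ring homomorphism properties. Additivity follows from the distributivity of composition over addition in $\End(E)$ (isogenies form an abelian group under pointwise addition on an elliptic curve), and multiplicativity is just the usual telescoping $\eta^{-1}\alpha\eta \cdot \eta^{-1}\beta\eta = \eta^{-1}(\alpha\beta)\eta$. The identity of $\OO(F,H)$ (which is $[1]_F$) maps to $\eta^{-1}\eta = [1]_E$, so unitality holds.

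Finally, bijectivity is obtained by exhibiting an explicit inverse. The symmetric map $\OO(E,G) \to \OO(F,H)$ defined by $\beta \mapsto \eta\beta\eta^{-1}$ is well-defined by the same argument as above applied to the isomorphism $\eta^{-1}: F \to E$ (which sends $H$ to $G$), and the two maps are mutually inverse by direct substitution. The main obstacle, such as there is one, is simply keeping the direction of the conjugation straight — everything else is formal.
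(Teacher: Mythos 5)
Your proof is correct and follows essentially the same route as the paper: both establish the isomorphism by conjugation $\alpha \mapsto \eta^{-1}\alpha\eta$ and check that the level-structure condition $\alpha(H)\subseteq H$ transfers to $(\eta^{-1}\alpha\eta)(G)\subseteq G$ using $\eta(G)=H$. Your version just spells out the ring-homomorphism and inverse-map checks that the paper leaves implicit.
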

\begin{proof}
If $\eta:E\to F$ is an isomorphism, then we have an isomorphism $\End(F)\to\End(E)$ given by $\alpha\mapsto \eta^{-1}\alpha\eta$. Since $\eta(G) = H$, $\alpha(H)\subseteq H$ is equivalent to $\eta^{-1}\alpha\eta(G)\subseteq G$. We have:
\begin{equation*}
\begin{split}
    \OO(F,H) &= \{\alpha\in\End(F): \alpha(H)\subseteq H\}\\
    &\cong \{\beta\in\End(E):\beta(G)\subseteq G\}\\
    &=\OO(E,G).
\end{split}
\end{equation*}
\end{proof}

In Theorem~\ref{thm:O(E,G)_is_EO}, we show that $\OO(E,G)$ is an Eichler order of level $N$ of $B_{p,\infty}$. We consider $\OO(\cdot,\cdot)$ as a map that we apply to elements $(E,G)$ of $\EGSet$. Just as supersingular elliptic curves are mapped to the set of maximal orders of $B_{p,\infty}$, we map elements of $\EGSet$ to Eichler orders of level $N$ of $B_{p,\infty}$. By Proposition~\ref{prop:Eichler_order_correspond}, the map $\OO(\cdot,\cdot)$ is surjective onto isomorphism classes of Eichler orders of level $N$ in $B_{p,\infty}$, but injectivity fails in an interesting way. We describe this completely in Section~\ref{sec:vertices_to_EOs}.

\begin{proposition}\label{prop:Int_of_two_MaxOrds}
Let $(E,G)$ be an element of $\EGSet$. Let $\varphi:E \to E/G$ be an isogeny with $\ker(\varphi) = G$. Then, $\OO(E,G)= \End(E)\cap(\frac{1}{\deg\varphi}\widehat{\varphi}\End(E/G)\varphi)$ where the intersection is taken in $B_{p,\infty}^E$ via the embedding described in equation~\eqref{eq:EndoRingToMaxOrder} and is independent of choice of $\varphi$.
\end{proposition}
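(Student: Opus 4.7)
The plan is to use the universal property of the quotient isogeny $\varphi:E \to E/G$: an endomorphism $\alpha \in \End(E)$ satisfies $\alpha(G) \subseteq G$ if and only if there exists a (necessarily unique) $\bar\alpha \in \End(E/G)$ with $\varphi\alpha = \bar\alpha\varphi$. This algebraic reformulation is the bridge to the right-hand side of the claim, because the embedding $\End(E/G) \hookrightarrow B_{p,\infty}^E$ from Section~\ref{ssec:bckgrnd_multipleendosembedding} is precisely $\bar\alpha \mapsto \frac{1}{\deg\varphi}\hat\varphi\bar\alpha\varphi$.

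For the inclusion $\OO(E,G)\subseteq \End(E) \cap \frac{1}{N}\hat\varphi\End(E/G)\varphi$, given $\alpha \in \OO(E,G)$ and the induced $\bar\alpha$, I compose $\varphi\alpha = \bar\alpha\varphi$ on the left by $\hat\varphi$ and apply $\hat\varphi\varphi = [\deg\varphi] = [N]$ on $E$ to obtain $N\alpha = \hat\varphi\bar\alpha\varphi$, i.e. $\alpha = \frac{1}{N}\hat\varphi\bar\alpha\varphi$ in $B_{p,\infty}^E$, which lies in the intersection. For the reverse inclusion, suppose $\alpha \in \End(E)$ can be written as $\frac{1}{N}\hat\varphi\beta\varphi$ for some $\beta \in \End(E/G)$; composing on the left by $\varphi$ and using $\varphi\hat\varphi = [N]$ on $E/G$ yields $\varphi\alpha = \beta\varphi$. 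Then for any $P \in G = \ker\varphi$ we have $\varphi(\alpha(P)) = \beta(\varphi(P)) = 0$, so $\alpha(P) \in \ker\varphi = G$, giving $\alpha \in \OO(E,G)$.

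For independence of $\varphi$: any other isogeny $\varphi':E\to E'$ with $\ker\varphi' = G$ factors as $\varphi' = \eta\varphi$ for some isomorphism $\eta:E/G \to E'$, so $\hat{\varphi'} = \hat\varphi\hat\eta$, $\End(E') = \eta\End(E/G)\hat\eta$, and $\deg\varphi' = N$. Substituting and using $\hat\eta\eta = [\deg\eta] = [1]$ collapses the conjugation to show $\frac{1}{\deg\varphi'}\hat{\varphi'}\End(E')\varphi' = \frac{1}{N}\hat\varphi\End(E/G)\varphi$. The main obstacle, modest as it is, is keeping careful track of the ambient $B_{p,\infty}^E$ in which the intersection is taken, since the rational expression $\frac{1}{N}\hat\varphi\beta\varphi$ need not lie in $\End(E)$ unless $\beta$ comes from a genuinely descended endomorphism — which is exactly what intersecting with $\End(E)$ enforces.
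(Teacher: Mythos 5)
Your proposal is correct and follows essentially the same route as the paper: both directions hinge on factoring $\varphi\alpha$ through $\varphi$ (the quotient/universal property, i.e.\ Corollary III.4.11 of \cite{AEC}) together with the identities $\hat\varphi\varphi=[N]$ and $\varphi\hat\varphi=[N]$, and the independence argument likewise collapses the conjugation by an automorphism (or isomorphism) of the codomain. The only cosmetic difference is that you phrase the forward inclusion via the explicit computation $\alpha=\frac{1}{N}\hat\varphi\bar\alpha\varphi$ and allow a general isomorphic codomain $E'$, which changes nothing of substance.
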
 

\begin{proof} 
We proceed by showing containment in both directions. To see $\OO(E,G)\supseteq \End(E)\cap(\frac{1}{\deg\varphi}\widehat{\varphi}\End(E/G)\varphi)$, take $\alpha\in \End(E)\cap(\frac{1}{\deg\varphi}\widehat{\varphi}\End(E/G)\varphi)$. Immediately we have $\alpha\in\End(E)$, so it remains to show $\alpha(G)\subseteq G$. There exists $\beta\in\End(E/G)$ such that $\varphi\circ\alpha = \beta\circ\varphi$. This guarantees that $\alpha(G)\subseteq G$, as $\varphi\circ\alpha(G)=\beta\circ\varphi(G) =\{O_{E/G}\} $.

To see $\OO(E,G)\subseteq \End(E)\cap(\frac{1}{\deg\varphi}\widehat{\varphi}\End(E/G)\varphi)$, take $\alpha\in \OO(E,G)$. To show $\alpha\in \frac{1}{\deg\varphi}\widehat{\varphi}\End(E/G)\varphi$, we will show that there exists a $\beta\in \End(E/G)$ such that $\varphi\circ\alpha = \beta\circ\varphi$. 
Since $\alpha(G)\subseteq G$, we have $\ker(\varphi) = G\subseteq\ker(\varphi\circ\alpha)$. We apply Corollary III.4.11 of \cite{AEC} to guarantee the existence of a (unique) $\beta:E/G\to E/G$ such that $\varphi\circ\alpha = \beta\circ\varphi$.

Our choice of $\varphi:E\to E/G$ is unique up to post-composition with an automorphism of $E/G$. If we replace $\varphi$ above with $\psi := \eta\circ\varphi$ for some $\eta\in\Aut(E/G)$, we obtain the object:
\[\frac{1}{\deg\psi}\widehat{\psi}\End(E/G)\psi = \frac{1}{\deg\eta\cdot\deg\varphi}\widehat{\varphi}\widehat{\eta}\End(E/G)\eta\varphi.\] 
Since $\eta$ is an automorphism of $E/G$, $\widehat{\eta}\End(E/G)\eta = \End(E/G)$ and $\deg\eta = 1$. This gives an equality:
\[\frac{1}{\deg\psi}\widehat{\psi}\End(E/G)\psi = \frac{1}{\deg\varphi}\widehat{\varphi}\End(E/G)\varphi. \]
\end{proof}

\subsubsection{The effect of extra automorphisms on $\EGSet$}\label{ssec:pnot1mod12}
Counting the number of elements of $\EGSet$ with $j(E)=0,1728$ is not as straightforward as counting subgroups of $\ZZ/N\ZZ\times \ZZ/N\ZZ$ of order $N$, due to the presence of extra automorphisms.
The automorphisms $[\pm 1]$ of any $E$ necessarily map $G$ to itself. In the case where $\Aut(E) = \{[\pm1]\}$, this means a fixed Weierstrass equation for a curve $E$ will have $(E,G_0)\sim(E,G_1)$ if and only if $G_0 = G_1$. In particular, every supersingular $j$-invariant not equal to $0$ or $1728$ will have the same number of equivalence classes in $\EGSet$: we refer to this as the generic case.

This is not necessarily the case for the extra automorphisms of $E$ with $j(E) = 0,1728$. The automorphisms of these curves can provide equivalences $(E,G_0)\sim(E,G_1)$ for $G_0\neq G_1$. In these cases, the $j$-invariants $0$ and $1728$ may have fewer equivalence class representatives in $\EGSet$ than the generic case.

\begin{example}
Let $p\equiv 3\pmod{4}$, $\mathbb{F}_{p^2} := \mathbb{F}_p[s]/(s^2 + 1)$, and $E_{1728}: y^2 = x^3 + x$. The order-two subgroups of $E(\Fpbar)$ are:
\[G_0:= \{(0,0),O_{E_{1728}}\},\quad G_1:= \{(s,0),O_{E_{1728}}\},\quad G_2:= \{(-s,0),O_{E_{1728}}\}.\]
The curve $E_{1728}$ has automorphism group isomorphic to $\mathbb{Z}/4\mathbb{Z}$ and is generated by $[s]:(x,y)\mapsto (-x,sy)$. The automorphism $[s]$ in particular sends $G_1$ to $G_2$ and vice versa, meaning $(E,G_1)\sim (E,G_2)$. Here, the $j$-invariant $1728$ only has two distinct equivalence classes in $|\mathcal{S}_2|$ instead of three, which is the generic case. 
\end{example}

In order to better understand the number of elements $(E,G)\in\EGSet$ with $j(E)=1728$, we write down a matrix representation for the action of $[i]$ on $E[N]\cong\ZZ/N\ZZ\times\ZZ/N\ZZ$ for any $N$. A similar procedure works for $j(E) = 0$ as well. For simplicity of enumerating the subgroups, we restrict to the case $N$ prime, but we hope that even with these simplifications the general principle will be clear to the reader.

\begin{proposition}
Let $E/\overline{\mathbb{F}}_p$ be an elliptic curve with $j(E) = 1728$, and let $N\neq p$ be prime. The action of $[i]$ on the set of order $ N$ subgroups of $E[N]$ is as follows:
\begin{enumerate}
    \item If $N$ is odd and $\left(\frac{-1}{N}\right) = -1$, then $[i]$ permutes the order $N$ subgroups in 2-cycles.
    \item If $N$ is odd and $\left(\frac{-1}{N}\right) = 1$, then $[i]$ fixes two order $N$ subgroups, and permutes the remaining $ N-1$ order $ N$ subgroups in 2-cycles. 
    \item If $N$ is even, then $[i]$ fixes one order $N$ subgroup and permutes the remaining two. 
\end{enumerate}
\end{proposition}

\begin{proof}
We begin by choosing a basis of $E[N]$ in order to write down a matrix representation of the action of $[i]$ on $E[N]$, for any integer $N>1$. Begin by noting that the action of $[i]$ on $E[N]$ cannot be the same as the action of $[m]$, the multiplication-by-$m$ map for any integer $m$. Assume $m<N$, as the action on the $N$-torsion is not changed by reducing $m$ modulo $N$. If it were, then $E[N]\subseteq \ker([m] - [i])$, which implies $N^2\mid \deg([m] - [i]) = m^2+1$. This contradicts the assumption that $m<N$. Thus there exists some $P\in E[N]$ such that $[i](P)$ is not a scalar multiple of $P$. We choose $P, Q:=[i](P)$ as our basis for $E[N]$. The matrix representation of $[i]$ with respect to this basis is then:
\[M_{[i]} = \begin{pmatrix}
0 & -1\\
1 & 0
\end{pmatrix}.\]
Enumerate the subgroups of order $N$ with respect to this basis as follows:
    \[G_0 = \langle P \rangle, \, G_1 = \langle P+Q\rangle,\,\dots\, ,G_{N-1} = \langle P + (N-1)Q\rangle\, ,G_N=\langle Q\rangle.\]
By construction, $M_{[i]}(G_0) = G_N$ and $M_{[i]}(G_N) = G_0$. 

Suppose $N$ is odd. Take $k\in\{1,2,..., N-1\}$. The group $G_k$ is generated by $\begin{pmatrix}1\\k\end{pmatrix}$, which $[i]$ maps to:
\[\begin{pmatrix}0 & -1\\ 1&0\end{pmatrix}\begin{pmatrix}1\\k\end{pmatrix} =\begin{pmatrix}-k\\1\end{pmatrix}.\]
The map $[i]$ fixes $G_k$ precisely when $\begin{pmatrix}-k\\1\end{pmatrix} = \begin{pmatrix}s\\sk\end{pmatrix}$, for some integer $s$, modulo $ N$. This gives the system of congruences:
\[s \equiv -k\pmod{N}\]
\[1\equiv sk\pmod{N}\]
which leads to the equation $-1\equiv k^2\pmod{N}$. There are either 0 or 2 solutions to this equation, depending on the value of the Legendre symbol $\left(\frac{-1}{N}\right)$. Note that $1^2 =1$ and $(-1)^2=1$, so $k$ cannot be $1$ or $ N-1$ for this to be true, so these subgroups are never fixed.

For $N = 2$, direct computation shows that $[i]$ will fix precisely one subgroup of order 2 and permute the remaining two.
\end{proof}

\subsection{Eichler Orders}\label{ssec:EOS}

The classical origins of Eichler orders can be traced to papers of Eichler himself \cite{Eichler01}, \cite{Eichler2}. The theory of Eichler orders was further developed by Pizer \cite{PizerTypeNumbs}. Eichler orders of squarefree level are called \textit{hereditary}. For relevant properties and background on Eichler and hereditary orders, see \cite{QASVOIGHT}.

Any Eichler order in a quaternion algebra is the intersection of two (not necessarily distinct) maximal orders. The level of an Eichler order in $B_{p,\infty}$ is given by its index in one of the maximal orders whose intersection defines the order (this index will be the same for either order). In \cite[Lemma 8]{KLPT}, the authors describe how an Eichler order of level $N$ is equivalent data to two maximal orders with a connecting ideal of reduced norm $N$. We let $\Nrd(I)$ denote the reduced norm of the ideal $I$.

\begin{theorem}\label{thm:O(E,G)_is_EO}
$\OO(E,G)$ is isomorphic to an Eichler order of level $|G| = N$. 
\end{theorem}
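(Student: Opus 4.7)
The plan is to combine Proposition~\ref{prop:Int_of_two_MaxOrds} with the characterization of Eichler orders recalled just before the statement (\cite[Lemma 8]{KLPT}). From Proposition~\ref{prop:Int_of_two_MaxOrds}, for any separable isogeny $\varphi\colon E\to E/G$ with kernel $G$, one has
\[\OO(E,G) = \End(E)\cap \bigl(\tfrac{1}{\deg\varphi}\widehat{\varphi}\End(E/G)\varphi\bigr),\]
with the intersection taking place inside $B_{p,\infty}^{E}$. So the entire problem reduces to (i) identifying both factors as maximal orders and (ii) computing the level.

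For (i), $\End(E)$ is a maximal order by the classical Deuring correspondence (Theorem~\ref{thm:bp-free-Deuring}). The second factor is precisely the image of $\End(E/G)$ under the embedding \eqref{eq:EndoRingToMaxOrder} of Section~\ref{ssec:bckgrnd_multipleendosembedding}, which is the right order of the left $\End(E)$-ideal $I_\varphi$ attached to $\varphi$ in Section~\ref{ssec:bckgrnd_isogenies}; the right order of a left ideal of a maximal order is again a maximal order. For (ii), the connecting ideal $I_\varphi$ satisfies $\Nrd(I_\varphi)=\deg\varphi=|G|=N$, so the two maximal orders above are connected by an ideal of reduced norm $N$. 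Invoking \cite[Lemma 8]{KLPT}, such data is equivalent to an Eichler order of level $N$, and that Eichler order is recovered as the intersection of the two maximal orders. Hence $\OO(E,G)$ is an Eichler order of level $N$.

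The main potential obstacle is purely bookkeeping, namely confirming that the map in \eqref{eq:EndoRingToMaxOrder} genuinely identifies $\End(E/G)$ with the right order of $I_\varphi$ and that the notion of level agrees between the two standard formulations. As a sanity check and alternative route to the level, one can compute the index $[\End(E):\OO(E,G)]$ directly via the reduction map $\End(E)\to\End(E[N])\cong M_2(\ZZ/N\ZZ)$, which is surjective because $B_{p,\infty}$ splits at $N\neq p$. Choosing a basis $\{P_1,P_2\}$ of $E[N]$ with $G=\langle P_1\rangle$, the condition $\alpha(G)\subseteq G$ translates to the matrix of $\alpha$ being upper triangular, so $\OO(E,G)$ is the preimage of the Borel subring of upper triangular matrices. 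Since that Borel has index $N$ in $M_2(\ZZ/N\ZZ)$, we obtain $[\End(E):\OO(E,G)]=N$, matching the definition of level directly and giving an independent verification of the conclusion.
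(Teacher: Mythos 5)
Your argument is correct and follows the paper's own proof essentially verbatim: reduce to Proposition~\ref{prop:Int_of_two_MaxOrds}, identify the two factors as $\End(E)$ and the right order of the connecting ideal $I_\varphi$ of reduced norm $N$, and invoke \cite[Lemma 8]{KLPT} to read off the level. The supplementary check via the surjection $\End(E)\to\End(E[N])\cong M_2(\ZZ/N\ZZ)$, identifying $\OO(E,G)$ as the preimage of the upper-triangular (Borel) subring of index $N$, is a valid and pleasant independent confirmation of the level, though not needed for the proof.
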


\begin{proof}
Proposition~\ref{prop:Int_of_two_MaxOrds} shows that $\OO(E,G) \cong \End(E)\cap (\frac{1}{\deg\varphi}\widehat{\varphi}\End(E/G)\varphi)$, where $E/G$ is the codomain of $\varphi$. Fix $B_{p,\infty}^E:= \End(E)\otimes_\ZZ\QQ$. By the Deuring correspondence $\End(E)$ and $\frac{1}{\deg\varphi}\widehat{\varphi}\End(E/G)\varphi$ are maximal orders in the quaternion algebra $B_{p,\infty}^E$. The intersection of two maximal orders is an Eichler order, so it remains to show that the level of $\OO(E,G)$ is $N$. 

In Proposition~\ref{prop:Int_of_two_MaxOrds}, we introduced the isogeny $\varphi:E\to E/G$ with kernel $G$. 
This isogeny corresponds to a left ideal $I$ of the maximal order $\End(E)$, where $\Nrd(I) = \deg\varphi = N$. 
See Section~\ref{ssec:bckgrnd_isogenies} for a detailed description of this association between isogenies and left ideals. The left order of $I$, which we denote $\OO_L(I)$, is $\End(E)$. Analogously, let $\OO_R(I)$ denote the right order of $I$. 
The image of $\End(E/G)$ in $B_{p,\infty}^E$ under the embedding \eqref{eq:EndoRingToMaxOrder} is the right order of $I$. 
Together with Proposition~\ref{prop:Int_of_two_MaxOrds}, this shows that $\OO(E,G)\cong \OO_L(I)\cap \OO_R(I)$. 
By \cite[Lemma 8]{KLPT}, this is an Eichler order of level $\Nrd(I) = N$.
\end{proof}

The following proposition shows that our map $\OO(\cdot,\cdot)$ to Eichler orders of level $N$ of $B_{p,\infty}$ is surjective. 

\begin{proposition}\label{prop:Eichler_order_correspond}
Every Eichler order $\OO$ of level $N$ in $B_{p,\infty}$ is isomorphic to $\OO(E,G)$ for some pair $(E,G)$ in $\EGSet$.
\end{proposition}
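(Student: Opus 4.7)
The plan is to reverse-engineer the construction of $\OO(E,G)$ from the Eichler order datum. Starting from an Eichler order $\OO$ of level $N$ in $B_{p,\infty}$, the first step is to invoke \cite[Lemma 8]{KLPT}, cited in the proof of Theorem~\ref{thm:O(E,G)_is_EO}, to produce a pair of maximal orders $O_1,O_2\subseteq B_{p,\infty}$ and a connecting (left $O_1$-, right $O_2$-) ideal $I$ of reduced norm $N$ such that $\OO \cong O_L(I)\cap O_R(I) = O_1\cap O_2$. This packages $\OO$ in precisely the form produced at the end of the proof of Theorem~\ref{thm:O(E,G)_is_EO}.

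Next I would translate this back across the Deuring correspondence. By Theorem~\ref{thm:bp-free-Deuring}, choose a supersingular elliptic curve $E/\Fpbar$ together with an isomorphism $\End(E)\cong O_1$, which identifies $B_{p,\infty}$ with $B_{p,\infty}^E$. Under this identification, $I$ becomes a left ideal of $\End(E)$. By the dictionary recalled in Section~\ref{ssec:bckgrnd_isogenies}, this left ideal corresponds to a separable isogeny $\varphi: E\to E'$ (separability is automatic since $N\neq p$) with
\[
\ker(\varphi) \;=\; \bigcap_{\alpha\in I}\ker(\alpha), \qquad \deg\varphi \;=\; \Nrd(I) \;=\; N.
\]
Set $G:=\ker(\varphi)\subseteq E[N]$. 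Since $N$ is prime, the group $G$ is of order $N$ and automatically cyclic, so $(E,G)\in\EGSet$, and the codomain $E'$ is canonically identified with $E/G$.

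The final step is identification of orders. The left order of $I$ is $\End(E)\cong O_1$ by construction, and the right order of $I$ is the image of $\End(E/G)$ in $B_{p,\infty}^E$ under the conjugation map \eqref{eq:EndoRingToMaxOrder}, namely $\tfrac{1}{\deg\varphi}\widehat{\varphi}\End(E/G)\varphi$. By Proposition~\ref{prop:Int_of_two_MaxOrds}, this conjugation image together with $\End(E)$ intersects to $\OO(E,G)$, i.e.
\[
\OO(E,G) \;\cong\; O_L(I)\cap O_R(I) \;\cong\; O_1\cap O_2 \;\cong\; \OO,
\]
which completes the proof.

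The only real point of care is the well-definedness of the passage from an abstract Eichler order $\OO$ to the datum $(O_1,O_2,I)$: different choices of presentation $\OO = O_L(I)\cap O_R(I)$ may lead to non-isomorphic $(E,G)$, but this does not affect the surjectivity statement, which is all we need. The separability of $\varphi$ and the cyclicity of $G$ are immediate consequences of the standing hypothesis that $N$ is a prime distinct from $p$, so no further obstacle arises; the entire argument is essentially a reversal of the forward direction proved in Theorem~\ref{thm:O(E,G)_is_EO}.
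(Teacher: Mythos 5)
Your proposal is correct and follows essentially the same route as the paper: write $\OO$ as an intersection of two maximal orders with a connecting ideal of reduced norm $N$, realize the left order as $\End(E)$ via the Deuring correspondence, pass from the ideal $I$ to the kernel subgroup $G$, and conclude with Proposition~\ref{prop:Int_of_two_MaxOrds}. The only cosmetic difference is that the paper first invokes the local uniqueness of the two maximal orders containing $\OO$ (Voight, Prop.\ 23.4.3) and lifts globally before applying \cite[Lemma 8]{KLPT}, whereas you let that lemma package the data $(O_1,O_2,I)$ directly; this does not change the argument.
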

\begin{proof}
Every local Eichler order $\OO$ of level $N$ is the intersection of two uniquely determined maximal orders $\OO_1,\OO_2$ such that $\OO$ is of index $N$ in both $\OO_1$ and $\OO_2$ \cite[Proposition 23.4.3]{QASVOIGHT}. Eichler orders of prime level are only non-maximal at primes which divide level, so all three orders $\OO,\OO_1,\OO_2$ lift uniquely to the global setting \cite[Theorem 9.1.1]{QASVOIGHT}. By the Deuring correspondence, fix an isomorphism $\End(E_1)\cong \OO_1$ for a supersingular elliptic curve $E_1/\Fpbar$. Let $B_{p,\infty}^{E_1} = \End(E_1)\otimes_\ZZ\QQ$.

By \cite[Lemma 8]{KLPT}, there exists a unique ideal $I$ of $B_{p,\infty}^{E_1}$ which is a left $\OO_1$-ideal and a right $\OO_2$-ideal of reduced norm $N$. This ideal determines a group $G$ of order $N$ given by the scheme theoretic intersection
\[G:=\bigcap_{\alpha\in I}E_1[\alpha]\]
where $E_1[\alpha]$ is the kernel of the endomorphism $\alpha$.
By equation \eqref{eq:EndoRingToMaxOrder} of Section~\ref{ssec:bckgrnd_multipleendosembedding}, the right order of $I$ is given by $\frac{1}{\deg\varphi}\hat{\varphi}\End(E_2)\varphi$. Since $\OO_2$ is the right order of $I$, we have $\OO_2 = \frac{1}{\deg\varphi}\hat{\varphi}\End(E_2)\varphi$.

By Proposition~\ref{prop:Int_of_two_MaxOrds}, $\OO(E_1,G) = \End(E_1)\cap\frac{1}{\deg\varphi}\hat{\varphi}\End(E_2)\varphi\cong \OO_1\cap\OO_2= \OO$.
\end{proof}

The failure of injectivity of $\OO(\cdot,\cdot)$ reveals structural properties of both the supersingular elliptic curves with level-$N$ structure and the Eichler orders of level $N$. We address this completely in Section~\ref{sec:vertices_to_EOs}.

\subsection{\texorpdfstring{$\ell$}{}-isogenies on the Quaternion Side}\label{ssec:ell_isog_on_quat_side}
Fix a small prime $\ell$ coprime to $pN$. The correspondence between $\ell$-isogenies and left ideals of a maximal order $\OO\cong \End(E)$ of reduced norm $\ell$ is well-known, as we recalled in Section~\ref{ssec:bckgrnd_isogenies}. Let $E$ be a supersingular elliptic curve over $\Fpbar$ with level-$N$ structure $G$. Let $\varphi:E\to E'$ be a degree-$\ell$ isogeny. Then $G\subset E[N], \varphi(G)\subset (E')[N]$ and $|G| = |\varphi(G)|=N$. Let $I_G$ be the left ideal of the maximal order isomorphic to $\End(E)$ in $B_{p,\infty}^E$ which corresponds to $\varphi$. The degree of the isogeny is the norm of the ideal. The isogeny $\varphi:E\to E'$ is also a morphism between elements of $\EGSet$, as $\varphi:(E,G)\to (E',\varphi(G))$. In this section, we describe isogenies between elements of $\EGSet$ as quaternion objects.

\begin{proposition}\label{prop:EOidealsandMOideals}
Let $\mathcal{O}$ be an Eichler order of level-$N$ specified by the intersection $M\cap M'$ of two maximal orders $M$ and $M'$. The integral left ideals $I$ of $M$ of norm coprime to $N$ are in bijection with the left ideals of $\mathcal{O}$ of norm coprime to $N$. This bijection is realized by the map $I\mapsto I\cap \mathcal{O}$ and if $\Nrd(I)$ is coprime to $N$, then $\Nrd(I) = \Nrd(\mathcal{O}\cap I)$.
\end{proposition}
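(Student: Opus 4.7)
The plan is to reduce the claim to a purely local statement via the local-global correspondence for $\ZZ$-lattices in the quaternion algebra $B_{p,\infty}$: any left ideal (of $M_1$ or of $\mathcal{O}$) is determined by its collection of completions $(I_\ell)_\ell$, and conversely any compatible collection of local lattices assembles to a global one. The first observation I would record is that $\mathcal{O}_\ell = M_{1,\ell} = M_{2,\ell}$ at every rational prime $\ell \neq N$ (since the Eichler order has level $N$), while at $\ell = N$ the algebra $B_{p,\infty} \otimes \QQ_N \cong M_2(\QQ_N)$ is split and $\mathcal{O}_N = M_{1,N} \cap M_{2,N}$ is a local Eichler order of level $N$, i.e. a local hereditary order sitting with index $N$ inside each of the maximal orders $M_{i,N}$.

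At every prime $\ell \neq N$ the map $I \mapsto I \cap \mathcal{O}$ is tautological: $(I \cap \mathcal{O})_\ell = I_\ell \cap \mathcal{O}_\ell = I_\ell$, and left $M_{1,\ell}$-ideals and left $\mathcal{O}_\ell$-ideals coincide. The whole content of the bijection therefore lives at the prime $N$. If $I$ is an integral left $M_1$-ideal with $\Nrd(I)$ coprime to $N$, then $I_N = M_{1,N}$, so $(I \cap \mathcal{O})_N = M_{1,N} \cap \mathcal{O}_N = \mathcal{O}_N$, which has unit reduced norm at $N$. Conversely, given a left $\mathcal{O}$-ideal $J$ with $\Nrd(J)$ coprime to $N$, I would use the structure of the local hereditary order $\mathcal{O}_N$ to show that $J_N = \mathcal{O}_N$: the reduced norm of a proper left $\mathcal{O}_N$-ideal is a non-trivial power of $N$, so unit reduced norm forces the ideal to be the whole order.

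Having this, I would define the inverse map by local patching: set $I$ to be the lattice with $I_\ell := J_\ell$ for $\ell \neq N$ and $I_N := M_{1,N}$. These completions are mutually compatible and agree with $M_{1,\ell}$ for almost all $\ell$, so $I$ is a left $M_1$-ideal; it is integral because $J$ is, and it recovers $J$ under intersection with $\mathcal{O}$ (checked completion-by-completion). The norm statement $\Nrd(I) = \Nrd(I \cap \mathcal{O})$ (the evident intended reading of the last sentence of the proposition) is then immediate: $I$ and $I \cap \mathcal{O}$ have identical completions at every prime $\ell \neq N$, and at $N$ both completions have unit reduced norm, so the two global reduced norms coincide.

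The principal obstacle is the local claim at $N$ that every left $\mathcal{O}_N$-ideal of unit reduced norm equals $\mathcal{O}_N$. This is where the hereditary nature of Eichler orders of prime level must be used. One can either appeal to the classification of one-sided ideals of a local hereditary order (as in Voight, \S 23--24), or argue directly from an explicit model
\[
\mathcal{O}_N \;\cong\; \left\{\begin{pmatrix} a & b \\ c & d \end{pmatrix} \in M_2(\ZZ_N) \;:\; c \in N\ZZ_N\right\},
\]
where the proper left ideals are generated (up to units) by powers of the uniformizer $\pi = \begin{pmatrix} 0 & 1 \\ N & 0 \end{pmatrix}$ normalizing $\mathcal{O}_N$, each with reduced norm a positive power of $N$. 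Once this local fact is in hand, the global bijection and the norm equality follow from the local-global reassembly described above.
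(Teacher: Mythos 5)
Your proposal is correct and takes essentially the same route as the paper: the paper's proof also reduces to completions, observes that $\mathcal{O}$ and the maximal order agree at all primes away from the level, and invokes the local-global correspondence for lattices (\cite[Theorem 9.1.1]{QASVOIGHT}), simply citing \cite[Lemma 5]{SqiSign} for the local content at $N$ and for the fact that intersection realizes the bijection, which you instead work out explicitly via the patching construction. The only blemish is your parenthetical claim that the proper left ideals of $\mathcal{O}_N$ are the powers of $\pi$ (that describes the two-sided ideals; there are several one-sided ideals of norm $N$), but this is harmless because the fact you actually need---an integral left $\mathcal{O}_N$-ideal of unit reduced norm contains a unit and hence equals $\mathcal{O}_N$---is true and is what your argument uses.
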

\begin{proof}
Let $\OO\subset M$ be as above. At $q\nmid N$, $\OO_q = M_q$, so the ideals of $\OO_q$ are precisely the ideals of $M_q$ intersected with $\OO_q$. At $q|N$, $M_q$ and $\OO_q$ each have a unique ideal of norm coprime to $N$, namely $M_q$ and $\OO_q$ respectively. The intersection $\OO_q = M_q\cap \OO_q$ thus realizes the bijection.

By \cite[Theorem 9.1.1]{QASVOIGHT}, the local bijection realized by intersection with $\mathcal{O}$ is in fact global.
\end{proof}

We have a way of associating the ideals $I$ of maximal orders to isogenies $E\to E'$. To extend this picture to the level structure context, we need to show that a left ideal of $\OO(E,G)$ has right order $\OO(E',G')$, for some isogeny $\varphi:E\to E'$ such that $\varphi(G) = G'$.

By Proposition~\ref{prop:EOidealsandMOideals}, every left-ideal of $\OO(E,G)$ of norm prime to $N$ is of the form $I\cap \OO(E,G)$, where $I$ is a left ideal of the maximal order $\End(E)\supseteq\OO(E,G)$. Let $\varphi_I:E\to E'$ be the isogeny determined by $I$ as in Section~\ref{ssec:bckgrnd_multipleendosembedding}, and let $G':= \varphi_I(G)$.

\begin{proposition}
Let $I\cap\OO(E,G)$ be a left ideal of $\OO(E,G)$ of norm prime to $N$. Then,
\[\OO_R(I\cap \OO(E,G)) = \frac{1}{\deg\varphi_I}\hat{\varphi}_I\OO(E',G')\varphi_I.\]
\end{proposition}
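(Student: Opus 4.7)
The plan is to establish the claimed equality of orders in $B_{p,\infty}^E$ by checking it locally at every rational prime $q$ and then invoking the local-global correspondence for lattices~\cite[Theorem 9.1.1]{QASVOIGHT}. Set $J := I \cap \OO(E,G)$; by Proposition~\ref{prop:EOidealsandMOideals}, $J$ is a left $\OO(E,G)$-ideal whose reduced norm equals $\Nrd(I) = \deg\varphi_I$, which is coprime to $N$. We must compare the right orders of $J$ and $\frac{1}{\deg\varphi_I}\hat{\varphi}_I\OO(E',G')\varphi_I$ as orders of $B_{p,\infty}^E$.

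At every prime $q \neq N$, the Eichler order $\OO(E,G)$ coincides locally with the ambient maximal order $\End(E)$, and similarly $\OO(E',G')_q = \End(E')_q$, since $N$ is the only prime at which an Eichler order of level $N$ fails to be maximal. Consequently $J_q = I_q \cap \End(E)_q = I_q$, and the right order of $J$ at $q$ is just the right order of $I$ at $q$, which by construction equals $\bigl(\frac{1}{\deg\varphi_I}\hat{\varphi}_I\End(E')\varphi_I\bigr)_q = \bigl(\frac{1}{\deg\varphi_I}\hat{\varphi}_I\OO(E',G')\varphi_I\bigr)_q$.

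The substantive step is the check at $q = N$. Since $\Nrd(I)$ is coprime to $N$, the local ideal $I_N$ equals $\End(E)_N$, so $J_N = \OO(E,G)_N$, and its right order is $\OO(E,G)_N$ itself. Because $\deg\varphi_I$ is coprime to $N$, the isogeny $\varphi_I$ restricts to a group isomorphism $E[N] \to E'[N]$ sending $G$ to $G'$, with $\hat{\varphi}_I(G') = [\deg\varphi_I]G = G$; moreover $(\deg\varphi_I)^{-1}$ lies in $\ZZ_N$, so every element of $\frac{1}{\deg\varphi_I}\hat{\varphi}_I\End(E')\varphi_I$ already lies in $\End(E)_N$ and acts on $E[N]$ in the usual way.

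The key calculation then shows, for $\beta \in \End(E')$ and $\alpha := \frac{1}{\deg\varphi_I}\hat{\varphi}_I \beta \varphi_I$, that $\alpha(G) \subseteq G$ if and only if $\beta(G') \subseteq G'$, by tracking $G$ successively through $\varphi_I$ (landing in $G'$), through $\beta$, through $\hat{\varphi}_I$ (bringing things back into $G$), and through the scalar $\frac{1}{\deg\varphi_I}$ (which acts as a unit on $E[N]$); the reverse implication uses the bijectivity of $\varphi_I$ on $N$-torsion. This identifies $\bigl(\frac{1}{\deg\varphi_I}\hat{\varphi}_I\OO(E',G')\varphi_I\bigr)_N$ with $\OO(E,G)_N$, which equals the right order of $J_N$, finishing the local check at $N$ and hence the global equality. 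The main obstacle is choosing the right formalism for the action of quaternion elements on $E[N]$ so that the ``if and only if'' is unambiguous; once this is set up correctly, the verification is a short diagram chase.
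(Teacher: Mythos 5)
Your local--global strategy is a genuinely different route from the paper's argument, and most of it is sound. Away from $N$ your reduction is clean: the Eichler orders agree with the ambient maximal orders locally, the ideal $I\cap\OO(E,G)$ localizes to $I$, and everything collapses to the classical fact $O_R(I)=\frac{1}{\deg\varphi_I}\hat{\varphi}_I\End(E')\varphi_I$; at $N$ the ideal becomes trivial, so the whole content is matching the two local Eichler orders. The paper instead argues globally: it checks by direct computation (using that elements of $I\cap\OO(E,G)$ are exactly the $\beta\circ\varphi_I$ with $\beta\in\Hom(E',E)$ and $\beta(G')\subseteq G$) that $\frac{1}{\deg\varphi_I}\hat{\varphi}_I\OO(E',G')\varphi_I$ multiplies $I\cap\OO(E,G)$ into itself, and then upgrades the resulting containment to equality by rigidity: the right order of a left ideal of an Eichler order of level $N$ is again an Eichler order of level $N$ \cite[Lemma 17.4.11]{QASVOIGHT}, and one such order cannot properly contain another. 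Your approach trades this global rigidity for local rigidity, which is a perfectly reasonable exchange and arguably makes the role of the hypothesis $\gcd(\Nrd(I),N)=1$ more transparent.

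The one step you should not wave through is the final identification at $q=N$. Your ``if and only if'' shows that conjugation by $\varphi_I$ carries the stabilizer of $G'$ in $\End(E')\otimes_\ZZ\ZZ_N$ onto the stabilizer of $G$ in $\End(E)\otimes_\ZZ\ZZ_N$. To conclude that $\left(\frac{1}{\deg\varphi_I}\hat{\varphi}_I\OO(E',G')\varphi_I\right)_N=\OO(E,G)_N$ you also need the equality $\OO(E,G)\otimes_\ZZ\ZZ_N=\{x\in\End(E)\otimes_\ZZ\ZZ_N : x(G)\subseteq G\}$, and its analogue for $(E',G')$. Tensoring the definition of $\OO(E,G)$ with $\ZZ_N$ only gives the containment ``$\subseteq$''; without equality, your computation places the conjugated order and $\OO(E,G)_N$ inside the same stabilizer but does not force them to coincide. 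The fix is short: both lattices have index $N$ in $\End(E)\otimes_\ZZ\ZZ_N$ (the level of $\OO(E,G)$ is $N$ by Theorem~\ref{thm:O(E,G)_is_EO}, while the stabilizer of a line in $M_2(\ZZ/N\ZZ)$ has additive index $N$ in $M_2(\ZZ/N\ZZ)$), so the containment is an equality; alternatively, quote the standard local description of Eichler orders of level $N$. With that lemma added, and with the action of $\End(E)\otimes\ZZ_N$ on $E[N]$ set up as you indicate, your proof closes.
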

\begin{proof} We proceed by showing containment in both directions. By Proposition~\ref{prop:EOidealsandMOideals} $I\cap \OO(E,G)$ is a left ideal of the Eichler order $\OO(E,G)$. 

Take $\frac{1}{\deg\varphi_I}\hat{\varphi}_I\alpha\varphi_I\in \frac{1}{\deg\varphi_I}\hat{\varphi}_I\OO(E',G')\varphi_I$, for some $\alpha\in\OO(E',G')$. To show that 
\\$(I\cap \OO(E,G))\frac{1}{\deg\varphi_I}\hat{\varphi}_I\alpha\varphi_I \subseteq I\cap \OO(E,G)$, note that the elements of $I\cap \OO(E,G)$ are characterized by the following two properties:  
    \begin{itemize}
        \item[(i)] Every $\nu\in I\cap \OO(E,G)$ must be of the form $\beta\circ\varphi_I$, for some $\beta\in \Hom(E',E)$. This property is equivalent to being in $I$, by \cite[Lemma 42.2.7]{QASVOIGHT}.
        \item[(ii)] Every $\nu\in I\cap \OO(E,G)$ must satisfy $\nu(G)\subseteq G$. This property is equivalent to being in $\OO(E,G)$, by definition. Note that this is equivalent to requiring that $\beta(G')\subseteq G$, when we write $\nu$ in the form $\nu = \beta\circ\varphi_I$.
    \end{itemize}
    
For any $\beta\circ\varphi_I\in I\cap \OO(E,G)$ we have:
    \begin{equation*}
    \begin{split}
        \beta\circ\varphi_I\circ\left(\frac{1}{\deg\varphi_I}\hat{\varphi}_I\circ\alpha\circ\varphi_I\right) &= \beta\circ\alpha\circ\varphi_I. \\
    \end{split}
    \end{equation*}
The element $\beta\circ\alpha\circ\varphi_I$ satisfies condition (i). To check condition (ii):
    \begin{equation*}
        \beta\circ\alpha\circ\varphi_I(G)
        =\beta\circ\alpha(G')
        \subseteq\beta(G')
        \subseteq G.
    \end{equation*}

To see $\OO_R(I\cap \OO(E,G)) \subseteq \frac{1}{\deg\varphi_I}\hat{\varphi}_I\OO(E',G')\varphi_I$, recall that $I\cap \OO(E,G)$ is a left ideal of an Eichler order of level $N$, and the right order of this ideal must also be an Eichler order of level $N$ (see \cite[Lemma 17.4.11]{QASVOIGHT}). Since $\OO_R(I\cap\OO(E,G))$ contains the Eichler order $\frac{1}{\deg\varphi_I}\hat{\varphi}_I\OO(E',G')\varphi_I$ of level $N$, this containment is equality.
\end{proof}

For an explicit example illustrating the correspondence between supersingular elliptic curves with level-$N$ structure and Eichler orders, we refer the reader to Example~\ref{examp}.

\section{Failure of Injectivity of \texorpdfstring{$\OO(\cdot,\cdot)$}{O(-,-)}}\label{sec:vertices_to_EOs}
We have shown how to associate supersingular elliptic curves with level-$N$ structure to Eichler orders of $B_{p,\infty}$ of level $N$ via the map $\OO(\cdot,\cdot)$. This map is not usually bijective, and we study the properties of supersingular elliptic curves with level structure which result in the various possible fiber sizes. In this section, we restrict to the case where $N$ is squarefree, in addition to being coprime to $p$. Eichler orders of level $N$ are in fact hereditary. In Section~\ref{ssec:ActionsOnEGSet} we describe two involutions corresponding to a dualizing action and the $p$-power Frobenius action. 
These involutions help us determine the fibers of $\OO(E,G)$ in Theorem~\ref{thm:fiber_aboveO(E,G)} of Section~\ref{ssec:Fibers_Subsec}.

\subsection{Involutions}\label{ssec:ActionsOnEGSet}

In this section, we will define two group involutions on the set $\EGSet$ of equivalence classes of supersingular elliptic curves with level-$N$ structure.

We begin by defining a dualizing involution on the equivalence classes $\ec{E,G}\in\EGSet$. For $N = q_1q_2\cdots q_r$, we will have $r$ dualizing involutions. The initiated reader will recognize these dualizing involutions as Atkin-Lehner involutions, and the author would like to thank Jaap Top in particular for this helpful perspective. First, we will define and illustrate this involution when $N$ is prime:

\begin{definition}[Dualizing Involution on $\EGSet$, prime case]
\[D(E,G):= (E/G,\widehat{G}),\]
where $\varphi_G:E\to E/G$ is an isogeny with kernel $G$, and $\widehat{G}$ denotes the kernel of the dual isogeny $\hat{\varphi}_G: E/G\to E$. In particular, $\widehat{G} = \varphi_G(E[N])$.
\end{definition}

The data of $(E,G)$ is equivalent to that of an isogeny, and that isogeny has a unique dual. In this way, the data of $(E,G)$ is equivalent to the data of $(E/G,\widehat{G})$. When $N$ factors as $N = q_1q_2\cdots q_r$, the kernel $G$ factors as $G = G_1\oplus G_2\oplus\cdots \oplus G_r$. There is a $D_i$ dualizing involution for each $i = 1,2,...,r$, defined as follows:

\begin{definition}[Dualizing Involution on $\EGSet$]\label{def:dualizing_action} 
Let $N = q_1\cdots q_r$, so $G = G_1\oplus G_2\oplus\cdots \oplus G_r$. Take $i\in\{1,...,r\}$ and without loss of generality let $i = 1$. The isogeny with kernel $G$ can be factored as $\varphi_G = \varphi_r\circ\varphi_{r-1}\circ\cdots\circ\varphi_2\circ\varphi_1$ with $\ker\varphi_1 = G_1$, $\ker\varphi_2 = \varphi_1(G_2)$, ..., $\ker\varphi_r = \varphi_{r-1}(\varphi_{r-2}(\cdots(\varphi_1(G_r))))$. Define $\widehat{G}_1 := \varphi_1(E[q_1])$, which is the kernel of $\widehat{\varphi}_1$. Define $\widehat{G}:=\widehat{G}_1\oplus\varphi_1(G_2)\oplus\varphi_1(G_3)\oplus \cdots \oplus \varphi_1(G_r)$, which is a subgroup of $\varphi_1(E)[N]$ of order $N$. Finally, we define:
\[D_1(E,G) := (\varphi_1(E),\widehat{G}).\]
\end{definition}

For $N = q_1\cdots q_r$, there are $r$ distinct dualizing involutions. First, we show that they are compatible with the equivalence relation on $\EGSet$ and then we show that they are commutative.

\begin{lemma}
If $(E,G)\sim (E',G')$ as in Definition~\ref{def:SSECsWithLS}, then $D_i(E,G)\sim D_i(E',G')$.
\end{lemma}
\begin{proof}
Without loss of generality, we take $i = 1$. We can factor $N = q_1\cdots q_r$, $G = G_1\oplus\cdots\oplus G_r$, and $G' = G_1'\oplus \cdots\oplus G_r'$ with $|G_i| = |G_i'| = q_i$. 

Suppose $\eta: E \to E'$ is the isomorphism sending $G$ to $G'$. This gives the following commutative diagram:
\begin{center}
\begin{tikzcd}
O_E \arrow[r] & G \arrow[r,hook] \arrow[d,"\eta"] & E \arrow[r, "\varphi_{G_1}"] \arrow[d,"\eta"] & E/G_1 \arrow[r] \arrow[d,dashed,"\omega"] & O_{E/G_1}\arrow[d,"\eta"]\\
O_{E'} \arrow[r] & G_1' \arrow[r,hook] & E' \arrow[r,"\varphi_{G_1'}"] & E'/G_1' \arrow[r] & O_{E'/G_1'}
\end{tikzcd}
\end{center}
Since $\omega\circ\varphi_{G_1} = \varphi_{G_1'}\circ\eta$ and $\deg\varphi_{G_1} = \deg\varphi_{G'_1}$ and $\eta$ is an isomorphism, the map $\omega: E/G_1\to E'/G_1'$ is an isomorphism. Moreover, $\widehat{\varphi_{G_1}} = \varphi_{\widehat{G}_1}= \widehat{\eta}\circ\varphi_{\widehat{G}_1'}\circ\omega$. Comparing the kernels on the left and right, we see $\omega(\widehat{G}_1)\subseteq\ker\varphi_{\widehat{G}_1'}$, and so $\omega(\widehat{G}_1) = \widehat{G}_1'$. 

We also have $\eta(G_i) = G_i'$ for all $i\neq 1$, so we can construct:
\[\widehat{G} = \widehat{G}_1\oplus\varphi_{G_1}(G_2)\oplus \varphi_{G_1}(G_3)\oplus\cdots\oplus\varphi_{G_1}(G_r)\]
and \[\widehat{G}' = \widehat{G}'_1\oplus\varphi_{G_1'}(G_2')\oplus \varphi_{G_1'}(G_3')\oplus\cdots\oplus\varphi_{G_1'}(G_r')\]
where $\widehat{G}\cong\widehat{G}'$ via $\omega$, and so we have $D_1(E,G)\sim D_1(E',G')$.
\end{proof}

\begin{lemma}
Let $(E,G)\in\EGSet$ for $N = q_1\cdots q_r$ and factor $G = G_1\oplus\cdots\oplus G_r$ with $|G_i| = q_i$.
Let $D_i,D_j$ be any two dualizing involutions with $i\neq j$. Then, $D_j(D_i(E,G))\sim D_i(D_j(E,G))$.
\end{lemma}

\begin{proof}
Without loss of generality, we may reorder the factors of $N = q_1\cdots q_r$ in such a way that $i = 1$ and $j = 2$. 
Let $\varphi_1$ denote the isogeny from $E$ with kernel $G_1$ and let $\varphi_2$ denote the isogeny from $E$ with kernel $G_2$. Let $\varphi_2'$ denote the isogeny from $E/G_1$ with kernel $\varphi_1(G_2)$ and let $\varphi_1'$ denote the isogeny from $E/G_2$ with kernel $\varphi_2(G_1)$. 

By definition, we have:
\[D_2(D_1(E,G)) = ((E/G_1)/\varphi_1(G_2), \varphi_2'(\widehat{G}_1)\oplus \ker\widehat{\varphi_2'}\oplus \varphi_2'(\varphi_1(G_3))\oplus \cdots \oplus \varphi_2'(\varphi_1(G_r)),\]
\[D_1(D_2(E,G)) = ((E/G_2)/\varphi_2(G_1), \ker\widehat{\varphi_1'}\oplus \varphi_1'(\widehat{G}_2)\oplus \varphi_1'(\varphi_2(G_3))\oplus \cdots \oplus \varphi_1'(\varphi_2(G_r)) ).\]

There exist isomorphisms 
\[\eta_1:(E/G_1)/\varphi_1(G_2) \to E/(G_1\oplus G_2),\] 
\[\eta_2:(E/G_2)/\varphi_2(G_1)\to E/(G_1\oplus G_2),\]
since $G_1\oplus G_2$ is the kernel of the composition of the two quotients, in both cases.
So $\eta:=\eta_1^{-1}\circ\eta_2$ is an isomorphism $(E/G_2)/\varphi_2(G_1)\cong (E/G_1)/\varphi_1(G_2)$. This is summarized in the following commutative diagram where we see $\eta\circ\varphi_1'\circ\varphi_2 = \varphi_2'\circ\varphi_1$:
\begin{center}
\begin{tikzcd}
E \arrow[r,"\varphi_1"] \arrow[d,equal]& E/G_1\arrow[r,"\varphi_2'"] & E/(G_1\oplus\varphi_1(G_2))\\
E \arrow[r,"\varphi_2"]& E/G_2\arrow[r,"\varphi_1'"] & E/(\varphi_2(G_1)\oplus G_2) \arrow[u, "\eta"]
\end{tikzcd}
\end{center}
We will show that the isomorphism $\eta$ satisfies the following three properties:
\begin{enumerate}
    \item $\eta(\ker\widehat{\varphi_1'}) = \varphi_2'(\widehat{G}_1)$,
    \item $\eta(\varphi_1'(\widehat{G}_2)) = \ker\widehat{\varphi_2'}$,
    \item $\eta(\varphi_1'(\varphi_2(G_i))) = \varphi_2'(\varphi_1(G_i))$ for all $i = 3, 4,\dots,r$.
\end{enumerate}

For the first property, plug the $q_1$-torsion of $E$ into both sides of the equation $\eta\circ\varphi_1'\circ\varphi_2 = \varphi_2'\circ\varphi_1$:
\begin{equation*}
\begin{split}
    \eta\circ\varphi_1'\circ\varphi_2(E[q_1]) &= \varphi_2'\circ\varphi_1(E[q_1])\\
    \eta\circ\varphi_1'((E/G_2)[q_1]) &= \varphi_2'(\widehat{G}_1)\\
    \eta(\ker\widehat{\varphi_1'}) &= \varphi_2'(\widehat{G}_1)
\end{split}
\end{equation*}

The second property follows symmetrically after applying $\eta^{-1}$ to both sides.

The third property is immediate by plugging $G_i$ into both sides of the equation $\eta\circ\varphi_1'\circ\varphi_2 = \varphi_2'\circ\varphi_1$, for $i\in\{3,\dots,r\}$.

We have established $D_1(D_2(E,G))\sim D_2(D_1(E,G))$ via the isomorphism $\eta(D_1(D_2(E,G))) = D_2(D_1(E,G))$.
\end{proof}

The $p$-power Frobenius map $\pi_p^E:E\to E^p$ defines another involution map $F_p$ on supersingular elliptic curves with level-$N$ structure in the following manner:

\begin{definition}[Frobenius Involution on $\EGSet$]
\[F_p\ec{E,G} := \ec{E^p,G^p},\]
where $E^p$ is the codomain of $\pi_p^E:E\to E^p$ and $G^p = \pi_p^E(G)$. 
\end{definition}
Lemma~\ref{lem:FrobInvIsWellDef} shows that this Frobenius involution gives an involution $F_p$ on the set of equivalence classes $\EGSet$

\begin{lemma}\label{lem:FrobInvIsWellDef}
If $(E,G)\sim (E',G')$ as in Definition~\ref{def:SSECsWithLS}, then $F_p(E,G)\sim F_p(E',G')$.
\end{lemma}
\begin{proof}
Suppose $\alpha:E'\to E$ is the isomorphism such that $\alpha(G') = G$. Let $\pi_p^{E'}:E'\to (E')^p$ denote the $p$-power Frobenius map of $E'$. Since $\alpha$ is separable and $\ker\alpha = O_{E'}\subseteq \ker\pi_p^{E'}$, there exists a unique isogeny $\lambda:E\to (E')^p$ such that $\pi_p^{E'} = \lambda\circ\alpha$ {\cite[Corollary III.4.11]{AEC}}. See Figure~\ref{fig:FpWellDefined01}.

\begin{figure}
    \centering
    \hfill
\begin{subfigure}{.45\textwidth}
\centering
\begin{tikzcd}
E' \arrow[rr,"\alpha"]\arrow[dd,swap, "\pi_p^{E'}"] & & E \arrow[ddll,dashed,"\lambda"]\\
& & \\
(E')^p & & 
\end{tikzcd}
\caption{Given an isomorphism $\alpha:E'\to E$ and the $p$-power Frobenius $\pi_p:E'\to (E')^p$, there exists a unique isogeny $\lambda:E\to (E')^p$ such that $\lambda\circ\alpha = \pi_p^{E'}$.}
\label{fig:FpWellDefined01}
\end{subfigure}
\hfill
\begin{subfigure}{.45\textwidth}
\centering 
\begin{tikzcd}
E' \arrow[rr,"\alpha"]\arrow[dd,swap, "\pi_p^{E'}"] & & E \arrow[dl,dashed,"\pi_p^E"]\\
& E^p\arrow[dl,"\beta"]& \\
(E')^p & & 
\end{tikzcd}
\caption{By inseparable degrees of the right and left sides of $\lambda\circ\alpha = \pi_p^{E'}$, we decompose $\lambda = \beta\circ\pi_p^E$ where $\beta:E^p\to (E')^p$ is an isomorphism.}
\label{fig:FpWellDefined02}
\end{subfigure}
\hfill
    \caption{Diagram to support Lemma~\ref{lem:FrobInvIsWellDef} in proving $F_p$ is well-defined on equivalence classes of $\EGSet$.}
    \label{fig:my_label}
\end{figure}

By comparing the total and inseparable degrees of the left and right sides of $\pi_p^{E'} = \lambda\circ\alpha$, we see that $\lambda$ decomposes as $\lambda = \beta\circ\pi_p^E$, where $\beta$ is separable and of degree 1. See Figure~\ref{fig:FpWellDefined02}.

Thus, $\beta:E^p\to (E')^p$ is an isomorphism. Moreover, $\beta\circ\pi_p^E\circ\alpha = \pi_p^{E'}$, and plugging in $G'$ we have:
\begin{equation*}
\begin{split}
    \pi_p^{E'}(G') & = \beta\circ\pi_p^E\circ\alpha(G')\\
    (G')^p &= \beta\circ\pi_p^E(G)\\
    (G')^p &= \beta(G^p)
\end{split}
\end{equation*}
so $\beta$ realizes $(E^p,G^p)\sim ((E')^p,(G')^p)$. 
\end{proof}

\begin{lemma}\label{lem:DandFpCommute}
For all $i = 1,...,r$,
\[D_i(F_p(E,G)) \sim F_p(D_i\ec{E,G}).\]
\end{lemma}
\begin{proof}
Without loss of generality, we assume $D_i = D_1$ corresponding to the factorizations $N = q_1\cdots q_r$ with $q_i$ distinct primes and $G = G_1\oplus \cdots \oplus G_r$ with $|G_i| = q_i$.

With the notation established, we wish to show that
\[((E/G_1)^p,(\widehat{G}_1)^p\oplus \varphi_1(G_2)^p\oplus\cdots \oplus \varphi_1(G_r)^p)\sim (E^p/G_1^p, \widehat{G^p_1}\oplus\varphi_{G_1^p}(G_2^p)\oplus\cdots \varphi_{G_1^p}(G_r^p)).\]

Since $\varphi_1:E\to E/G_1$ is separable and $\varphi_{G_1^p}\circ\pi_p^E(G_1) = O_{E^p/G_1^p}$, there exists a unique isogeny $\lambda:E/G_1\to E^p/G_1^p$ such that $\lambda\circ\varphi_1 = \varphi_{G_1^p}\circ\pi_p^E$. See Figure~\ref{fig:DandFpCommute01}. 
By comparing the total and inseparable degrees of the left and right sides of $\lambda\circ\varphi_1 = \varphi_{G_1^p}\circ\pi_p^E$, we see that $\lambda$ decomposes as $\alpha\circ\pi_p^{E/G_1}$, where $\pi_p^{E/G_1}$ is the $p$-power Frobenius map on $E/G_1$ and $\alpha$ is an isomorphism. See Figure~\ref{fig:DandFpCommute02}.

Finally, to establish that $D_1(F_p(E,G))\sim F_p(D_1(E,G))$, we must show \[\alpha((\widehat{G}_1)^p\oplus \varphi_1(G_2)^p\oplus\cdots \oplus \varphi_1(G_r)^p) = \widehat{G^p_1}\oplus\varphi_{G_1^p}(G_2^p)\oplus\cdots \varphi_{G_1^p}(G_r^p),\]
which can be accomplished by showing the following two properties:
\begin{enumerate}
    \item $\alpha((\widehat{G}_1)^p) = \widehat{G_1^p}$ and
    \item $\alpha(\varphi_1(G_i)^p) = \varphi_{G_1^p}(G_i^p)$ for all $i = 2, \dots, r$.
\end{enumerate}

For the first property, plug the $q_1$-torsion of $E$ into both sides of the equation $\alpha\circ\pi_p^{E/G_1}\circ\varphi_1 = \varphi_{G_1^p}\circ\pi_p^E$:
\begin{equation*}
\begin{split}
\alpha\circ\pi_p^{E/G_1}\circ\varphi_1 (E[q_1])&= \varphi_{G_1^p}\circ\pi_p^E(E[q_1]) \\
\alpha\circ\pi_p^{E/G_1}(\widehat{G}_1) &= \varphi_{G_1^p}(E^p[q_1])\\
\alpha((\widehat{G}_1)^p) &= \widehat{G_1^p}.
\end{split}
\end{equation*}

For the second property, we proceed in the same way by plugging $G_i$ into both sides of the equation $\alpha\circ\pi_p^{E/G_1}\circ\varphi_1 = \varphi_{G_1^p}\circ\pi_p^E$:
\begin{equation*}
\begin{split}
    \alpha\circ\pi_p^{E/G_1}\circ\varphi_1(G_i) &= \varphi_{G_1^p}\circ\pi_p^E(G_i)\\
    \alpha(\varphi_1(G_i)^p) &= \varphi_{G_1^p}(G_i^p).
\end{split}
\end{equation*}

\begin{figure}
    \centering
    \hfill 
    \begin{subfigure}{.48\textwidth}
    \begin{tikzcd}
    E \arrow[r,"\pi_p^E"]\arrow[dd, "\varphi_1",swap] & E^p \arrow[r,"\varphi_{G_1^p}"] & E^p/G_1^p\\
     & & \\
     E/G_1 \arrow[uurr,dashed,swap,"\lambda"]
    \end{tikzcd}
    \caption{Given the separable isogeny $\varphi_1:E\to E/G_1$ and the isogeny $\varphi_{G_1^p}\circ\pi_p^E$, there exists a unique isogeny $\lambda:E/G_1\to E^p/G_1^p$ such that $\lambda\circ\varphi_1 = \varphi_{G_1^p}\circ\pi_p^E$.}
    \label{fig:DandFpCommute01}
    \end{subfigure}
    \hfill
    \begin{subfigure}{.48\textwidth}
        \begin{tikzcd}
    E \arrow[r,"\pi_p^E"]\arrow[dd,swap,"\varphi_1"] & E^p\arrow[r, "\varphi_{G_1^p}"] & E^p/G_1^p\\
     &(E/G_1)^p \arrow[ur,swap,"\alpha"]& \\
     E/G_1 \arrow[ur,swap,"\pi_p^{E/G_1}"]
    \end{tikzcd}
    \caption{By inseparable degrees of the right and left sides of $\lambda\circ\varphi_1 = \varphi_{G_1^p}\circ\pi_p^E$, we decompose $\lambda = \alpha\circ\pi_p^{E/G_1}$, where $\alpha:(E/G_1)^p \to E^p/G_1^p$ is an isomorphism.}
    \label{fig:DandFpCommute02}
    \end{subfigure}
    \hfill 
    \caption{Diagram to support Lemma~\ref{lem:DandFpCommute} in proving that $F_p$ and $D_1$ commute on equivalence classes of $\EGSet$.}
    \label{fig:my_label2}
\end{figure}
\end{proof}

\subsection{Fibers of \texorpdfstring{$\OO(\cdot,\cdot)$}{O(-,-)}}\label{ssec:Fibers_Subsec}

The involutions $D_i$ and $F_p$ descend to well-defined involutions on isomorphism classes of Eichler orders via the map $\OO(\cdot,\cdot)$ as defined in Definition~\ref{def:OO(E,G)}. In this section, we show that these descended involutions are all trivial on isomorphism classes of Eichler orders. As in the previous section, we factor $N$ into distinct prime factors $N = q_1\cdots q_r$, we decompose $G = G_1\oplus\cdots\oplus G_r$ with $|G_i| = q_i$, and we let $D_i$ denote the dualizing involutions for $i\in\{1,\dots,r\}$.

\begin{lemma}\label{lem:D_act_triv}
For any $\ec{E,G}\in \EGSet$ and any $i\in \{1,\dots,r\}$, $\OO( D_i\ec{E,G}) \cong\OO\ec{E,G}$. Moreover, the isomorphism is given by an isomorphism of the quaternion algebras $B_{p,\infty}^{E/G_i}\to B_{p,\infty}^E$.
\end{lemma}
\begin{proof}
Without loss of generality, reorder the factors of $N$, $G$ such that $D_i = D_1$. By definition of the dualizing involution, we have:
\[D_1(E,G) = (E/G_1,\widehat{G}_1\oplus\varphi_1(G_2)\oplus\cdots\oplus\varphi_1(G_r)),\]
where $\varphi_1:E\to E/G_1$ is the isogeny with $\ker\varphi_1 = G_1$ and where $\widehat{G}_1 = \ker\widehat{\varphi}_1$. To ease notation, 
\[D_1(G):=\widehat{G}_1\oplus\varphi_1(G_2)\oplus\cdots\oplus\varphi_1(G_r).\]

By Proposition~\ref{prop:Int_of_two_MaxOrds}, 
\[\OO(E,G) = \End(E)\cap\left(\frac{1}{\deg\varphi_G}\widehat{\varphi}_G\End(E/G)\varphi_G\right)\subseteq B_{p,\infty}^E,\] 
where $\varphi_G:E\to E/G$ is an isogeny with $\ker\varphi_G = G$. Likewise,
\[\OO(D_1(E,G)) = \End(E/G_1)\cap \left(\frac{1}{\deg\varphi_{D_1(G)}}\widehat{\varphi}_{D_1(G)}\End((E/G_1)/D_1(G))\varphi_{D_1(G)}\right)\subseteq B_{p,\infty}^{E/G_1},\]
where $\varphi_{D_1(G)}:E/G_1\to (E/G_1)/D_1(G)$ is the isogeny with kernel $D_1(G)$.

The map from $B_{p,\infty}^{E/G_1}\to B_{p,\infty}^E$ is given by conjugation by $\frac{1}{q_1}\widehat{\varphi}_1(-)\varphi_1$. Mapping $\OO(D_1(E,G))$ into $B_{p,\infty}^E$ by this map,  we wish to show 
\[\frac{1}{q_1}\widehat{\varphi}_1\OO(D_1(E,G)))\varphi_1 = \OO(E,G).\]
We will achieve this by showing containment in one direction. As these are Eichler orders of the same level, this suffices to show equality. 

Take $\alpha\in\OO(D_1(E,G))$. By definition, $\alpha$ satisfies the following two properties:
\begin{enumerate}
    \item $\alpha(\widehat{G}_1)\subseteq \widehat{G}_1$ and
    \item $\alpha(\varphi_1(G_i))\subseteq \varphi_1(G_i)$ for $i = 2, \dots, r$.
\end{enumerate}
We need to show that $\frac{1}{q_1}\widehat{\varphi}_1\circ\alpha\circ\varphi_1$ satisfies the following two properties:
\begin{enumerate}
    \item $\frac{1}{q_1}\widehat{\varphi}_1\circ\alpha\circ\varphi_1(G_1)\subseteq G_1$ and 
    \item $\frac{1}{q_1}\widehat{\varphi}_1\circ\alpha\circ\varphi_1(G_i)\subseteq G_i$ for $i = 2,\dots,r$.
\end{enumerate}

For the first property, we plug $G_1$ into the equation and use the fact that $G_1 = \widehat{\varphi}_1((E/G_1)[q_1])$ and $\alpha((E/G_1)[q_1])\subseteq (E/G_1)[q_1]$:
\[\frac{1}{q_1}\widehat{\varphi}_1\circ\alpha\circ\varphi_1(G_1) = \frac{1}{q_1}\widehat{\varphi}_1\circ\alpha\circ\varphi_1(\widehat{\varphi}_1((E/G_1)[q_1])) = \widehat{\varphi}_1\circ\alpha((E/G_1)[q_1])\subseteq\widehat{\varphi}_1((E/G_1)[q_1]) = G_1.\]

For the second property, take $G_i$ with $i\in\{2,\dots,r\}$ and plug into the equation and use the fact that $\alpha(\varphi_1(G_i))\subseteq\varphi(G_i)$:
\[\frac{1}{q_1}\widehat{\varphi}_1\circ\alpha\circ\varphi_1(G_i) \subseteq \frac{1}{q_1}\widehat{\varphi}_1(\varphi_1(G_i)) = G_i.\]

This establishes $\frac{1}{q_1}\widehat{\varphi}_1\OO(D_1(E,G)))\varphi_1 = \OO(E,G)$, and thus $\OO(D_1(E,G))) \cong \OO(E,G)$, where the isomorphism is the map $B_{p,\infty}^{E/G_1}\to B_{p,\infty}^E$ given by conjugation by $\varphi_1$.
\end{proof}

Denote an arbitrary composition of $D_i$ involutions as $D_J(E,G)=(\varphi_J(E),G_J)$, where $J\subseteq\{1,\dots,r\}$ (including the possibility $J = \varnothing$), to ease notation.
We see that there are $2^r$ possible compositions of dualizing involutions: $(E,G), D_1(E,G), D_2(E,G),...,D_2\circ D_1(E,G), ...$, etc. 
For every arbitrary composition $D_J(E,G)$, we have $\OO(E,G) = \OO(D_J(E,G))$. 

\begin{lemma}\label{lem:Fp_act_triv}
For any $\ec{E,G}\in \EGSet$, $\OO( F_p\ec{E,G}) \cong \OO\ec{E,G}$. Moreover, the isomorphism is given by an isomorphism of the quaternion algebras $B_{p,\infty}^{E^p}\to B_{p,\infty}^E$.

\end{lemma}
\begin{proof}
By Proposition~\ref{prop:Int_of_two_MaxOrds}, 
\[\OO(E,G) = \End(E)\cap\left(\frac{1}{\deg\varphi_G}\widehat{\varphi}_G\End(E/G)\varphi_G\right) \subseteq B_{p,\infty}^E,\] 
where $\varphi_G:E\to E/G$ is an isogeny with $\ker\varphi_G = G$. Likewise,
\[\OO(E^p,G^p) = \End(E^p)\cap\left(\frac{1}{\deg\varphi_{G^p}}\widehat{\varphi}_{G^p}\End(E^p/G^p)\varphi_{G^p}\right)\subseteq B_{p,\infty}^{E^p},\] 
where $\varphi_{G^p}:E^p\to E^p/G^p$ is an isogeny with $\ker\varphi_{G^p} = G^p = \pi_p(G)$, where $\pi_p$ is the $p$-power Frobenius isogeny from $E$ to $E^p$. By Lemma~\ref{lem:DandFpCommute}, we have a relationship between $\varphi_G$ and $\varphi_{G^p}$:
\[\varphi_{G^p}\circ\pi_p^E = \lambda\circ\varphi_G,\]
where $\lambda = \alpha\circ\pi_p^{E/G}$ for an isomorphism $\alpha:(E/G)^p\to E^p/G^p$.

Next, we map $\OO(E^p,G^p)$ into $B_{p,\infty}^{E}$. For what follows, write $\pi_p:= \pi_p^E$. Since $\pi_p:E\to E^p$, we have $\hat{\pi_p}:E^p\to E$. This map gives an isomorphism $\frac{1}{p}\hat{\pi_p}\End(E^p)\pi_p= \End(E)$. Conjugating $\OO(E^p,G^p)$ by this map, we obtain the image of $\OO(E^p,G^p)$ in $B_{p,\infty}^E$:
\begin{equation*}
\begin{split}
    \frac{1}{p}\widehat{\pi_p}\OO(E^p,G^p)\pi_p &= \frac{1}{p}\hat{\pi_p}\left(\End(E^p)\cap(\frac{1}{\deg\varphi_{G^p}}\widehat{\varphi}_{G^p}\End(E^p/G^p)\varphi_{G^p})\right)\pi_p\subseteq B_{p,\infty}^E\\
    & = \frac{1}{p}\hat{\pi_p}\End(E^p)\pi_p\cap\left(\frac{1}{p\deg\varphi_{G^p}}\widehat{\varphi_{G^p}\pi_p}\End(E^p/G^p)\varphi_{G^p}\pi_p\right)\\
    &= \End(E)\cap \left(\frac{1}{p\deg\varphi_G}\widehat{\lambda\varphi_G}\End(E^p/G^p)\lambda\varphi_G\right)
\end{split}
\end{equation*}
Recall that $\lambda:E/G\to E^p/G^p$ factors as $\lambda = \alpha\circ\pi_p^{E/G}$, where $\alpha:(E/G)^p\to E^p/G^p$ is an isomorphism. Substituting this in above gives:
\begin{equation*}
\begin{split}
    \frac{1}{p}\widehat{\pi_p}\OO(E^p,G^p)\pi_p &= \End(E) \cap \left(\frac{1}{p\deg\varphi_G}\widehat{\varphi_{G}}\widehat{\pi_p^{E/G}}\widehat{\alpha}\End(E^p/G^p)\alpha\pi_p^{E/G}\varphi_{G}\right)\\
    &= \End(E) \cap \left(\frac{1}{p\deg\varphi_G}\widehat{\pi_p^{E/G}\varphi_{G}}\End((E/G)^p)\pi_p^{E/G}\varphi_{G}\right)\\
    &= \End(E) \cap \left(\frac{1}{\deg\varphi_G}\widehat{\varphi_{G}}\End(E/G)\varphi_{G}\right)\\
    &=\OO(E,G),
\end{split}
\end{equation*}
where the equality between lines (2) and (3) follows from $\frac{1}{p}\widehat{\pi_p^{E/G}}\End((E/G)^p)\pi_p^{E/G} = \End(E/G)$.
We have recovered $\OO(E^p,G^p)\cong\OO(E,G)$, where the isomorphism is the map $B_{p,\infty}^{E^p}\to B_{p,\infty}^E$ given by conjugation by $\pi_p$. 
\end{proof}


The fiber along $\OO(\cdot,\cdot)$ above $\OO(E,G)$ contains $D_J(E,G),D_J(E^p,G^p)\in \EGSet$. 
We will see that these are the only possible elements of the fiber along $\OO(\cdot,\cdot)$ in Theorem~\ref{thm:fiber_aboveO(E,G)}. 
However, this does not mean that the fiber along $\OO(\cdot,\cdot)$ is always of the same size: it can happen that two or more of the equivalence classes listed above coincide.

\begin{example}[$p = 61$, $N = 2$]\label{ex:p61N2Sets}
Let $\mathbb{F}_{61^2} = \mathbb{F}_{61}[s]/(s^2 + 60s + 2)$. Table~\ref{tbl:p61N2JInvs} lists supersingular $j$-invariants and Weierstrass equations.

\begin{table}
\begin{tabular}{|l|l|}
\hline
$j(E)$     & Weierstrass Equation               \\ \hline
$9$        & $E_9:y^2 = x^3 + 53x + 18$             \\ \hline
$41$       & $E_{41}:y^2 = x^3 + 6x + 34$              \\ \hline
$50$       & $E_{50}: y^2 = x^3 + 14x + 36$     \\ \hline
$20s + 32$ & $E_{20s+32}: y^2 = x^3 + (30s+47)x + (48s+49)$ \\ \hline
$41s+52$   & $E_{41s+52}: y^2 = x^3 + (31s+16)x + (13s+36)$ \\ \hline
\end{tabular}
\caption{$j$-invariants and Weierstrass equations for the computations of Example~\ref{ex:p61N2Sets}.}
\label{tbl:p61N2JInvs}
\end{table}

Table~\ref{tbl:setsforp61N2} sorts the pairs $(E,G)$ into sets of the form:
\[\{(E,G), (E/G,\hat{G}), (E^p,G^p),((E/G)^p,\hat{G}^p)\}.\] 
The last column indicates the size of the set $\{(E,G), (E/G,\hat{G}), (E^p,G^p),((E/G)^p,\hat{G}^p)\}$, i.e., the size of the fiber above the corresponding image under $\OO(\cdot,\cdot)$.

\begin{table}
\begin{center}
\scalebox{.8}{\begin{tabular}{|l|l|l|l|l|}
\hline
$(E,G)$                                 & $(E/G, \hat{G})$                              & $(E^p,G^p)$                             & $((E/G)^p,\hat{G}^p)$                       & $\mid$Set$\mid$ \\ \hline
$(E_{50},\langle(59,0)\rangle)$         & $(E_{41},\langle (4,0)\rangle)$         & $(E_{50},\langle(59,0)\rangle)$         & $(E_{41},\langle (4,0)\rangle)$         & 2        \\ \hline
$(E_{50},\langle(60s+32,0)\rangle)$     & $(E_{20s+32},\langle(2s+58,0)\rangle)$  & $(E_{50},\langle(s+31,0)\rangle)$       & $(E_{41s+52},\langle(59s+60,0)\rangle)$ & 4        \\ \hline
$(E_{41},\langle (43s + 7,0) \rangle)$  & $(E_{41},\langle (18s + 50,0) \rangle)$ & $(E_{41},\langle (18s + 50,0) \rangle)$ & $(E_{41},\langle (43s + 7,0) \rangle)$  & 2        \\ \hline
$(E_{20s+32},\langle(40s+6,0)\rangle)$  & $(E_{41s+52},\langle(21s+46,0)\rangle)$ & $(E_{41s+52},\langle(21s+46,0)\rangle)$ & $(E_{20s+32},\langle(40s+6,0)\rangle)$  & 2        \\ \hline
$(E_{20s+32},\langle(19s+58,0)\rangle)$ & $(E_9,\langle(50s+2,0)\rangle)$         & $(E_{41s+52},\langle(42s+16,0)\rangle)$ & $(E_9,\langle(11s+52,0)\rangle)$        & 4        \\ \hline
$(E_9,\langle(7,0)\rangle)$             & $(E_9,\langle(7,0)\rangle)$             & $(E_9,\langle(7,0)\rangle)$             & $(E_9,\langle(7,0)\rangle)$             & 1        \\ \hline
\end{tabular}}
\caption{Table of the sets $\{(E,G),(E/G,\hat{G}),(E^p, G^p),((E/G)^p,\hat{G}^p)\}\subset \EGSet$ for $p = 61$, $N=2$}\label{tbl:setsforp61N2}
\end{center}
\end{table}

\end{example}

\begin{lemma}\label{lem:sameendring}
If $(E,G),(F,H)\in \EGSet$ and $\OO(E,G)\cong \OO(F,H)$, then $(F,H) = D_J(E,G)$ or $D_J(E^p,G^p)$ for some $J\subseteq\{1,...,r\}$.
\end{lemma}
\begin{proof}
    By definition,
    \[\OO(E,G) = \End(E)\cap \left(\frac{1}{\deg\varphi_G}\widehat{\varphi}_G\End(E/G)\varphi_G \right)\subseteq B_{p,\infty}^E\]
    \[\OO(F,H) = \End(F)\cap \left(\frac{1}{\deg\varphi_H}\widehat{\varphi}_H\End(F/H)\varphi_H \right)\subseteq B_{p,\infty}^F.\]
    Fix an isomorphism $B_{p,\infty}^F\cong B_{p,\infty}^E$ and work solely in $B_{p,\infty}^E$. Our assumption gives an isomorphism 
    \[\End(E)\cap \left(\frac{1}{\deg\varphi_G}\widehat{\varphi}_G\End(E/G)\varphi_G \right) \cong \End(F)\cap \left(\frac{1}{\deg\varphi_H}\widehat{\varphi}_H\End(F/H)\varphi_H \right).\]
    If we localize at any prime $q\nmid N$, this isomorphism is an equality of maximal orders. At any prime $q\mid N$, $\OO(E,G)_q$ is a local Eichler order, which is the intersection of two uniquely determined maximal orders. This gives one of two possibilities:
    \[\End(F)_q = \End(E)_q\,\text{ or }\End(F)_q = \left(\frac{1}{\deg\varphi_G}\widehat{\varphi}_G\End(E/G)\varphi_G\right)_q,\]
    where `$\cdot_q$' denotes `$\cdot\otimes_\ZZ \ZZ_q$'. Let $J = \{i\in\{1,...,r\}:\End(F)_{q_i}\neq \End(E)_{q_i}\}$ and define $(E_J,G_J):=D_J(E,G)$. Let $\varphi_J:E\to E_J$ denote the isogeny in the definition of $D_J(E,G)$. Then, $\End(F)_q=\left(\frac{1}{\deg\varphi_J}\widehat{\varphi}_J\End(\varphi_J(E))\varphi_J\right)_q$ locally at every prime $q$, so this equality holds globally as well. By the Deuring correspondence, either $F\cong \varphi_J(E)$ or $F\cong\pi_p\circ\varphi_J(E)$.

    The proof of Lemma~\ref{lem:D_act_triv} gives
    \[\OO(E,G) = \frac{1}{\deg\varphi_J}\widehat{\varphi}_J\OO(E_J,G_J)\varphi_J.\]
    Applying this above:
    \begin{equation*}
    \begin{split}
        \End(F)\cap\left(\frac{1}{\deg\varphi_H}\widehat{\varphi}_H\End(F/H)\varphi_H\right) =& \OO(E,G)\\
        =&\frac{1}{\deg\varphi_J}\widehat{\varphi}_J\OO(E_J,G_J)\varphi_J\\
        =&\left(\frac{1}{\deg\varphi_J}\widehat{\varphi}_J\End(E_J)\varphi_J\right)\cap\\
        &\left(\frac{1}{\deg\varphi_J}\widehat{\varphi}_J\left(\frac{1}{\deg\varphi_{G_J}}\widehat{\varphi}_{G_J}\End(E_J/G_J)\varphi_{G_J}\right)\varphi_J\right),
    \end{split}
    \end{equation*}
    where $\varphi_{G_J}:E_J\to E_J/G_J$ denotes the isogeny with kernel $G_J$. Since $\End(F) = \frac{1}{\deg\varphi_J}\widehat{\varphi}_J\End(E_J)\varphi_J$, the left ideal linking $\End(F)$ to $\End(F/H)$ is the same left ideal linking $\End(E_J)$ to $\End(E_J/G_J)$, up to conjugation by $\varphi_J$. The connecting ideal determines the kernel of a separable isogeny, giving either $(F,H) = (E_J,G_J)$ or $(F,H) = F_p(E_J,G_J)$.
\end{proof}

By \cite[Section 23.3.19]{QASVOIGHT}, above each of the primes dividing the reduced discriminant $pN = pq_1\cdots q_r$ of $\OO(E,G)$, there is a unique two-sided maximal ideal of order two in the two-sided ideal class group of $\OO(E,G)$, and the collection of these ideals generates the two-sided ideal class group of $\OO(E,G)$. Let $\mathfrak{p}$ denote the two-sided ideal of $\OO(E,G)$ above $p$ and let $\mathfrak{q}_i$ denote the two-sided ideal of $\OO(E,G)$ above $q_i$.

\begin{lemma}\label{lem:twosidedideals}
    Let $J\subseteq\{1,...,r\}$. Then, $(E,G)\sim F_p^e(D_J(E,G))$ if and only if $\mathfrak{p}^e\prod_{i\in J}\mathfrak{q}_i$ is a principal two-sided ideal of $\OO(E,G)$. 
\end{lemma}
\begin{proof}
    For simplicity, we begin with the case $J = \{1\}$ and $e = 0$. First suppose $\mathfrak{q}_1$ is principal with generator $\psi\in\OO(E,G)$. Then, $\psi(G)\subseteq G$ and $\deg\psi = Nrd(\mathfrak{q}_1) = q_1$. Since $q_1$ is prime, either $\psi(G_1) = G_1$ or $\psi(G_1) = O_E$. By theory of hereditary orders, $\mathfrak{q}_1^2 = (q_1)$, so $[q_1] = \mu\circ\psi^2$ for some $\mu\in\Aut(E)$, and we must have $\psi(G_1) = O_E$. Since $\ker\varphi_1\subseteq\ker\psi$ and $\deg\varphi_1 = \deg\psi$, there exists an isomorphism $\tau:E/G_1\to E$ such that $\tau\circ\varphi_1 = \psi$. Since $\psi(G_i) = G_i$ for all $i = 2,...,r$, it follows that $\tau(\varphi_1(G_i)) = G_i$ for all $i = 2,...,r$. 
    By separable degree considerations and the facts that $\psi(G_1)=O_E$ and $[q_1] = \mu\circ\psi^2$, we conclude that $\psi(E[q_1]) = G_1$. 
    By construction, $\varphi_1(E[q_1]) = \widehat{G}_1$, so $\tau(\widehat{G}_1) = G_1$. Thus, $\tau$ gives an isomorphism from $D_1(E,G)=(\varphi_1(E),\widehat{G})$ to $(E,G)$.

    For the reverse implication, assume $(E,G)\sim D_1(E,G)$, so there exists an isomorphism $\eta:E/G_1\to E$ such that $\eta(\widehat{G}) = G$. Additionally, $\eta\circ\varphi_1$ is an endomorphism of $E$ and $\eta\circ\varphi_1(G) = G_2\oplus\cdots\oplus G_r\subseteq G$, so $\eta\circ\varphi_1\in\OO(E,G)$. Thus, $\eta\circ\varphi_1$ generates a two-sided ideal of $\OO(E,G)$ of norm $q_1$. To see that this is the unique two-sided ideal $\mathfrak{q}_1$ of $\OO(E,G)$ above $q_1$, we show that $[q_1]\in(\eta\circ\varphi_1)^2$:
    \begin{equation*}
    \begin{split}
        (\eta\circ\varphi_1\circ\eta\circ\varphi_1)(E[q_1]) &=(\eta\circ\varphi_1\circ\eta)(\widehat{G}_1) \\
        &=(\eta\circ\varphi_1)(G_1)\\
        &= O_E.
    \end{split}
    \end{equation*}

    The above argument establishes that $(E,G)\sim D_1(E,G)$ if and only if $\mathfrak{q}_1$ is a principal two-sided ideal of $\OO(E,G)$. 

    Now, suppose $e = 1$. For simplicity, assume $J = \varnothing$. If $\mathfrak{p}$ is a principal two-sided ideal of $\OO(E,G)$, then by \cite[23.3.19]{QASVOIGHT}, $\mathfrak{p}$ is the unique prime two-sided ideal of $\OO(E,G)$ above $p$ and $\mathfrak{p}^2 = (p)$. Let $\pi\in\OO(E,G)$ denote the generator of $\mathfrak{p}$, which must be of degree $p$. Recall that $E$ is supersingular, and so $[p]$ is purely inseparable. By definition, $\pi(E) = E$ and $\pi(G)\subseteq G$. It follows by degree argument that $\pi = \alpha\circ\pi_p^E$ for an isomorphism $\alpha:E^p\to E$ sending $G^p\to G$, so we have $(E,G)\sim F_p(E,G)$. Conversely, suppose $(E,G)\sim F_p(E,G)$. Let $\omega:E^p\to E$ be the isomorphism such that $\omega(G^p) = G$. Then $\omega\circ\pi_p^E$ generates a principal two-sided ideal of $\OO(E,G)$ of norm $p$, which is necessarily the unique principal two-sided ideal of $\OO(E,G)$ above $p$.

    For general $J,e$, both directions of the proof require us to establish the existence of an element $\psi\in\OO(E,G)$ of norm $p^e\prod_{i\in J}q_i$ with inseparable degree $p^e$ and $\psi(G_i) = O_E$ and $\psi(E[q_i]) = G_i$ for $i\in J$, and $\psi(G_i)=G_i$ for $i\not \in J$. Such properties are necessary and sufficient to prove the existence of a unique isomorphism $\eta$ such that $\eta\circ\varphi_J\circ(\pi_p^E)^e) = \psi$, and $\eta$ establishes the isomorphism required to show $(E,G)\sim D_J(F_p^e(E,G))$.
\end{proof}

\begin{theorem}\label{thm:fiber_aboveO(E,G)}
Let $\OO$ be an Eichler order of $B_{p,\infty}$ of squarefree level $N$. The size of the two-sided ideal class group of $\OO$ is equal to the number of distinct pairs $(E,G)$ of $\EGSet$ for which $\OO(E,G)\cong \OO$. Moreover, the fiber above $\OO(E,G)$ along the map $\OO(\cdot,\cdot)$ contains precisely the equivalence classes $\ec{E,G}$, $D_J\ec{E,G}$, $F_p\ec{E,G}$, and $D_J(F_p\ec{E,G})$, where $D_J$ denotes the composition of dualizing involutions for some $J\subseteq\{1,\dots,r\}$. This fiber is of size $2^{k}$ for some $k\in\{0,1,...,r+1\}$. 
\end{theorem}
\begin{proof}
By Proposition~\ref{prop:Eichler_order_correspond}, $\OO\cong \OO(E,G)$ for some $(E,G)\in\EGSet$. By Lemma~\ref{lem:D_act_triv} and Lemma~\ref{lem:Fp_act_triv}, the fiber above $\OO$ along $\OO(\cdot,\cdot)$ contains $D_J(E,G)$ and $D_J(E^p,G^p)$ for all $J\subseteq\{1,...,r\}$: $\OO\cong \OO(D_J(E,G))\cong \OO(D_J(E^p,G^p))$. Lemma~\ref{lem:sameendring} shows these are the only possible elements of the fiber above $\OO$ along $\OO(\cdot,\cdot)$. Lemma~\ref{lem:twosidedideals} shows that the two-sided ideal class group of $\OO(E,G)$ dictates when $F_p^{e_1}(D_{J_1}(E,G))\sim F_p^{e_2}(D_{J_2}(E,G))$. 
Since $\OO$ is a hereditary order of level $N$, the only possible non-principal two-sided ideals of $\OO$ are the $2^{r+1}$ products of the ideals above primes dividing $pN$, and the information of Lemma~\ref{lem:twosidedideals} completely determines the the number of distinct pairs of $\EGSet$ with isomorphic images under the map $\OO(\cdot,\cdot)$. 
Since $D_i(D_i(E,G))\sim (E,G)$ and $F_p(F_p(E,G))\sim(E,G)$, any relation among the elements of the fiber above $\OO(E,G)$ along $\OO(\cdot,\cdot)$ can be phrased as $(E,G)\sim \cdot$, and the fiber size is equal to size of the two-sided ideal class group of $\OO(E,G)$.
\end{proof}

For example, if $\mathfrak{q}_1$ is the only principal two-sided ideal of $\OO(E,G)$, then $(E,G)\sim D_1(E,G)$ and the fiber above $\OO$ along $\OO(\cdot,\cdot)$ is of size $2^{r}$ containing the inequivalent elements $F_p^e(D_J(E,G))$ for all $(e,J)\subseteq \{0,1\}\times\mathcal{P}(\{2,...,r\})$, where $\mathcal{P}$ denotes the power set. 

\begin{example}\label{ex:p1123N235}
We fix $p = 1123$ and $N = 2\cdot 3\cdot 5$ and use Magma \cite{magma} to enumerate the isomorphism classes of Eichler orders of level $N$ in the definite quaternion algebra ramified at $p$ and $\infty$. There are 456 such classes. Then, we can use built-in functions of Magma to determine the sizes of the two-sided ideal class groups of these orders:
\begin{itemize}
    \item No isomorphism classes have two-sided ideal class groups of sizes 1 or 2.
    \item Three isomorphism classes have two-sided ideal class groups of size 4.
    \item Sixty-six isomorphism classes have two-sided ideal class groups of size 8.
    \item The remaining three hundred and eighty-seven isomorphism classes have two-sided ideal class groups of 16.
\end{itemize}
This indicates the expected ``generic" behavior when $p$ is large relative to $N$: the fiber size is in general as large as possible, as collisions via the involutions we have defined do not often occur.
\end{example}

\section{Counting \texorpdfstring{$N$}{N}-isogenies \texorpdfstring{$E\to E^{p}$}{}}\label{sec:countingNisogconjs}
In this section, we require $N$ to be a prime $N\neq p$, and we will note where we use the fact that $N<<p$. We apply the results of Section~\ref{sec:vertices_to_EOs} to provide a new approximate upper bound on the number of $N$-isogenies between pairs of distinct supersingular elliptic curves with conjugate $j$-invariants. We contrast this to approximate counts and bounds provided in \cite[Lemma 6]{CGL06} and \cite[Theorem 3.9]{EHLMP_lisogconjcount} in Section~\ref{sssec:compare_to_other_bounds}.

Let $\alpha_N$ denote the number of pairs $(E,\psi)\in \EGSet$ where $\psi:E\to E^p$ is a degree-$N$ isogeny from $E$ to its $p$-power Frobenius conjugate $E^p$. Let $\alpha_N'\leq\alpha_N$ be the count of the subset of pairs $(E,\psi)$ as above with $E$ defined over $\Fpp\setminus\Fp$.

\subsection{An Approximate Upper-Bound From Eichler Orders}\label{ssec:UBfromEOs}

For this section, we assume $p\equiv1\pmod{12}$ unless otherwise stated. This is to avoid the case where there are fewer than $(N+1)$ level-$N$ structures on a supersingular elliptic curve $E$ (see Section~\ref{ssec:pnot1mod12}). In this case, we have $\#\EGSet = (N+1)(\#\mathbf{S}_p)$, where $\#(\mathbf{S}_p)$ denotes the number of supersingular $j$-invariants over $\overline{\mathbb{F}}_p$. 

Let $\mathcal{F}_{1},\mathcal{F}_{2},\mathcal{F}_{4}$ denote the number of isomorphism classes of Eichler orders whose fibers along $\OO(\cdot,\cdot)$ are sizes 1, 2, and 4, respectively.
Let $T$ denote the number of isomorphism classes of Eichler orders of level $N$ of $B_{p,\infty}$. 

\begin{proposition}\label{prop:CountingFacts} The following relations hold between the quantities $\mathcal{F}_1,\mathcal{F}_2,\mathcal{F}_4,T,(\#\mathbf{S}_p)$:
\begin{equation}\label{eq:FibersGiveTypeNumb}
\mathcal{F}_1 + \mathcal{F}_2 + \mathcal{F}_4 = T
\end{equation}
\begin{equation}\label{eq:SizeOfEGSetFromFibers}
\mathcal{F}_1 + 2\mathcal{F}_2 + 4\mathcal{F}_4 =\#\EGSet =  (N+1)(\#\mathbf{S}_p).
\end{equation}
\end{proposition}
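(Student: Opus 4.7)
The plan is to prove both equations by a counting argument based on the fiber structure established in Section~\ref{ssec:Fibers_Subsec}. The central observation is that the fibers of $\OO(\cdot,\cdot)\colon \EGSet \to \{\text{isomorphism classes of Eichler orders of level }N\}$ partition the domain, and by Proposition~\ref{prop:Eichler_order_correspond} the map is surjective, so the set of fibers is in bijection with the set of isomorphism classes of Eichler orders of level $N$ in $B_{p,\infty}$.

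For equation~\eqref{eq:FibersGiveTypeNumb}, I will observe that Theorem~\ref{thm:fiber_aboveO(E,G)} identifies the fiber above $\OO(E,G)$ with the orbit of $\ec{E,G}$ under the action of $\langle D, F_p\rangle$, which is a quotient of the Klein four-group acting on $\EGSet$. Every orbit of a $(\ZZ/2\ZZ)^2$-action has size dividing $4$, so size $3$ is excluded, and every fiber has size $1$, $2$, or $4$. Since the fibers are nonempty and are classified by size, summing the partition gives $\mathcal{F}_1 + \mathcal{F}_2 + \mathcal{F}_4 = T$.

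For equation~\eqref{eq:SizeOfEGSetFromFibers}, I will double-count $|\EGSet|$. One count comes directly from the fiber decomposition: $|\EGSet| = \mathcal{F}_1 + 2\mathcal{F}_2 + 4\mathcal{F}_4$. The other count uses the hypothesis $p\equiv 1\pmod{12}$: since $j=0$ is supersingular only when $p\equiv 2\pmod{3}$ and $j=1728$ is supersingular only when $p\equiv 3\pmod{4}$, neither value occurs as a supersingular $j$-invariant in this setting. Hence every supersingular $E/\Fpbar$ has $\Aut(E) = \{[\pm 1]\}$, whose nontrivial element acts trivially on the subgroup lattice of $E[N]$. With the fixed Weierstrass models chosen at the start of Section~\ref{ssec:ActionsOnEGSet}, each equivalence class $\ec{E,G}$ has a unique pair representative, so $|\EGSet|$ equals the number of pairs $(E,G)$ with $E$ one of the $\#\mathbf{S}_p$ chosen models and $G\subseteq E[N]$ cyclic of order $N$. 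Since $N$ is prime, $E[N]\cong (\ZZ/N\ZZ)^2$ has exactly $N+1$ such subgroups, yielding $|\EGSet| = (N+1)(\#\mathbf{S}_p)$.

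The potential obstacles are minor. The only things that require care are the stabilizer analysis ruling out fiber size $3$, which is automatic from divisibility of orbit sizes, and the verification that the hypothesis $p\equiv 1\pmod{12}$ is exactly what excludes extra automorphisms so that equivalence classes are in one-to-one correspondence with pairs $(E,G)$ built from the fixed Weierstrass representatives. Once these points are in place, both identities follow by direct counting without further input.
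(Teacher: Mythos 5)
Your proposal is correct and follows essentially the same route as the paper: the fibers of $\OO(\cdot,\cdot)$ partition $\EGSet$, surjectivity (Proposition~\ref{prop:Eichler_order_correspond}) identifies the nonempty fibers with the $T$ isomorphism classes, Theorem~\ref{thm:fiber_aboveO(E,G)} limits fiber sizes to $1$, $2$, or $4$, and double counting gives both identities. Your added details --- the orbit-of-$(\ZZ/2\ZZ)^2$ argument ruling out size $3$ and the observation that $p\equiv 1\pmod{12}$ excludes $j=0,1728$ so that $|\EGSet|=(N+1)(\#\mathbf{S}_p)$ --- simply make explicit points the paper relies on from Sections~\ref{ssec:p1Mod12} and~\ref{ssec:UBfromEOs}.
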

\begin{proof}
For \eqref{eq:FibersGiveTypeNumb}: every Eichler order of level $N$ has fiber size 1, 2, or 4 along $\OO(\cdot,\cdot)$ by Theorem~\ref{thm:fiber_aboveO(E,G)}. For \eqref{eq:SizeOfEGSetFromFibers}: the size of $\EGSet$ is $(N+1)(\#\mathbf{S}_p)$, and every element of $\EGSet$ lies in a fiber above some isomorphism class of Eichler order of level $N$ along $\OO(\cdot,\cdot)$.
\end{proof}

Combine the equations in Proposition~\ref{prop:CountingFacts} to solve for $\mathcal{F}_2$:
\begin{equation}\label{eq:CombineEqs}
    \mathcal{F}_2 = 2T - \frac{N+1}{2}(\#\mathbf{S}_p) - \frac{3}{2}\mathcal{F}_1.
\end{equation}
When the fiber above $\OO(E,G)$ is size one, all of the involutions described in Section~\ref{sec:vertices_to_EOs} act as the identity: In particular, the isogeny $\varphi_G$ is an endomorphism with kernel stable under the $p$-power Frobenius. 
For $N$ much smaller than $p$, curves with degree-$N$ endomorphisms are rare: The number of curves with a non-scalar endomorphism of degree less than or equal to $N$ is $O(N^{3/2})$ (see for example \cite{BonehLove}). Example~\ref{ex:p1123N235} provides some empirical evidence to this claim.
Assuming $\mathcal{F}_1 = 0$, we obtain an approximate upper bound for $\mathcal{F}_2$ from Equation~\eqref{eq:CombineEqs}:
\begin{equation}\label{eq:approx_ub}
2T - \frac{N+1}{2}(\#\mathbf{S}_p).
\end{equation}
The number of degree-$N$ isogenies between conjugate curves $\alpha_N'$ is also generically counted in $\mathcal{F}_2$. To see this, begin by noting that if $E$ is defined over $\Fpp$ and not $\Fp$, then the fiber above $\OO(E,G)$ is either of size 2 or 4. Furthermore, if $G$ is the kernel of an $N$-isogeny from $E$ to $E^p$, then $E/G = E^p$.  From here, either $G^p = \hat{G}$ or $G^p\neq \hat{G}$. In the first case, the fiber above $\OO(E,G)$ is size 2. If $G^p \neq \hat{G}$, then there are two separate degree-$N$ isogenies from $E^p$ to $E$: one with kernel $G^p$ and the other with kernel $\hat{G}$. This corresponds to a double-edge in the $N$-isogeny graph, which is a rare occurrence (as discussed in \cite{arpin2019adventures}). Therefore, Equation \eqref{eq:approx_ub} gives an approximate upper-bound for the number of conjugate pairs of supersingular curves over $\Fpp\setminus\Fp$ connected by an $N$-isogeny, namely $\alpha_N'/2$. We conclude that an approximate upper-bound for $\alpha_N'$ is:
\begin{equation}\label{eq:approx_ub_actual}
    4T - (N+1)(\#\mathbf{S}_p).
\end{equation}

This approximate upper-bound for $\alpha_N'$ can be compared with the data computed in \cite{arpin2019adventures}, which we discuss in Section~\ref{sssec:compare_to_other_bounds}.

\subsection{Comparison With Other Bounds}\label{sssec:compare_to_other_bounds} 

The author's first interest in the question of counting $N$-isogenous conjugate curves began with research as part of the collaborative work in \cite{arpin2019adventures}. We wished to identify the frequency of \textit{mirror paths}, which are invariant under the Frobenius conjugate. These mirror paths necessarily have a central point of symmetry, which either corresponds to a $j$-invariant defined over $\Fp$, or a pair of $N$-isogenous conjugate $j$-invariants both defined over $\Fpp$ and not $\Fp$. We posed a question about counting the number of $N$-isogenous conjugate pairs, as described in the second mirror path scenario. This corresponds to estimating $\alpha_N'$. In \cite{arpin2019adventures}, we computed $\alpha'_N$ for a wide range of values $p$.

Subsequently, \cite{EHLMP_lisogconjcount} considered the question of counting the number $\alpha_N$ of supersingular $j$-invariants with an $N$-isogeny to their $p$-power Frobenius conjugate. They pointed out that an upper-bound for this value, which they denote $|S^p|$, could be computed using \cite[Lemma 6]{CGL06}, which provides an approximation for this value. The authors also provided a lower-bound \cite[Theorem 3.9]{EHLMP_lisogconjcount}:
\[|S^p|\geq \frac{\sqrt{Np}}{6(N+1)\log\log(Np)}.\]

This lower-bound is an easily computed function which provides a lower-bound on the class number of the order $\ZZ[\sqrt{-Np}]$. In Figure~\ref{fig:3IsogConj}, we plot this rational function for $N=3$ against the data for $3$-isogenous conjugates provided in \cite{arpin2019adventures} and the upper-bound for $\alpha_3'$ in Equation~\eqref{eq:approx_ub_actual} is plotted. 


\begin{figure}
    \centering
    \begin{subfigure}[b]{0.49\textwidth}
        \includegraphics[scale=.45]{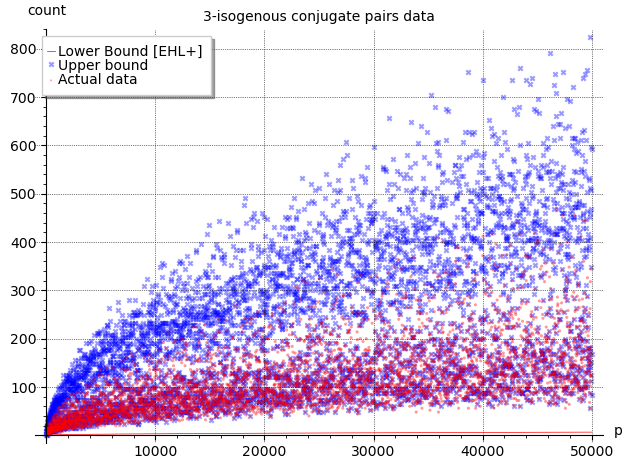}
        \caption{All primes $5\leq p<50000$. A total of $5131$ data points.}\label{fig:part1}
    \end{subfigure}
    \hfill 
    \begin{subfigure}[b]{0.49\textwidth}
        \includegraphics[scale=.45]{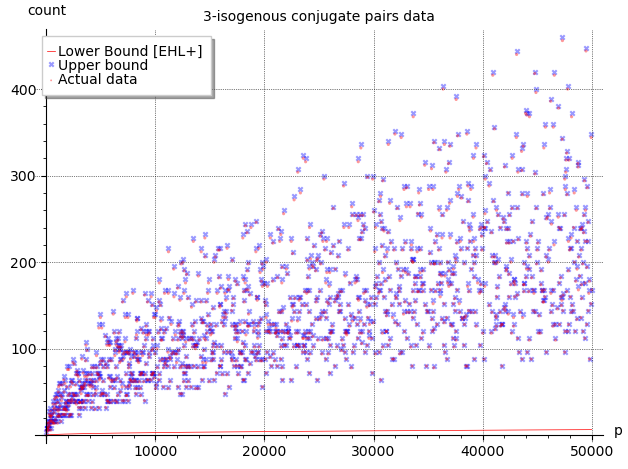}
        \caption{Primes $5\leq p<50000$ with $p\equiv1\pmod{12}$. A total of $1264$ data points.}
    \end{subfigure}
    \caption{The counts of 3-isogenies between distinct $p$-power Frobenius conjugate supersingular elliptic curves over $\Fpbar$. For $p\equiv 1\pmod{12}$, the estimated upper bound described in this section was never off by more than four: exactly accurate for $23.33\%$ of the data, over by two for $51.34\%$ of the data, and over by four for $25.32\%$ of the data. The lower bound from \cite{EHLMP_lisogconjcount} is plotted in red.}
    \label{fig:3IsogConj}
\end{figure}

The big-O notation approximation provided in \cite[Lemma 6]{CGL06} can be adjusted to give an exact count of $N$-isogenous conjugate curves. To begin this analysis, we provide the statement of \cite[Lemma 6]{CGL06}:
\begin{lemma}[Lemma 6~\cite{CGL06}]\label{lem:CGL_NIsogConj}
Let $i$ be a non-negative integer. The number $\alpha(i)$ of supersingular $j$-invariants such that $\text{dist}_G(j,j^p)\leq i$ is the number of pairs $(E,g)$ consisting of a supersingular elliptic curve $E$ and an endomorphism $g$ of $E$ of degree $p\cdot\ell^j$, $j\leq i$, up to isomorphism. Assume that $i\leq \log_\ell(p/4)$. Then
\[\alpha(i) = \ell^{i/2}\widetilde{O}(\sqrt{p}).\]
\end{lemma}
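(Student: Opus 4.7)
The plan is to prove the lemma in two stages: first establish the correspondence between the objects being counted on the two sides of the equality, then bound that count via the Eichler--Deuring mass formula and standard class-number estimates.

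First I would establish the correspondence between pairs $(E,g)$ with $g\in\End(E)$ of degree $p\ell^{j'}$ and pairs $(E,\psi)$ with $\psi:E\to E^p$ a separable isogeny of degree $\ell^{j'}$. Since the hypothesis $i\leq\log_\ell(p/4)$ forces $\ell^{j'}<p$, the degrees $\ell^{j'}$ and $p$ are coprime, so any $g\in\End(E)$ of degree $p\ell^{j'}$ decomposes uniquely into inseparable and separable parts as $g=\hat{\pi}_p\circ\psi$, where $\pi_p:E\to E^p$ is the $p$-power Frobenius and $\psi$ is separable of degree $\ell^{j'}$. Cyclic separable isogenies $\psi:E\to E^p$ of degree $\ell^{j'}$ in turn biject with non-backtracking walks of length $j'$ in the $\ell$-isogeny graph from $j(E)$ to $j(E)^p$. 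Hence the existence of such a pair with $j'\leq i$ is equivalent to $\text{dist}_G(j(E),j(E)^p)\leq i$, identifying the two sides of the claim.

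The second step is to count pairs $(E,g)$ with $\deg g=p\ell^{j'}$. Any such $g$ of trace $t$ satisfies $g^2-tg+p\ell^{j'}=0$, so $\ZZ[g]\subseteq\End(E)$ is an order of discriminant $t^2-4p\ell^{j'}$. By the Deuring correspondence together with Eichler's theory of optimal embeddings (or Hurwitz--Kronecker--Gross), the number of supersingular $E$ admitting such an embedding, weighted appropriately by automorphisms, is governed by the Hurwitz class number $H(4p\ell^{j'}-t^2)$. Applying an effective bound of the shape $H(|D|)=\widetilde{O}(\sqrt{|D|})$, summing over the $O(\sqrt{p\ell^{j'}})$ admissible values of $t$ with $t^2<4p\ell^{j'}$, and then summing over $j'\leq i$ using the geometric series $\sum_{j'=0}^{i}\ell^{j'/2}=O(\ell^{i/2})$ would yield the asserted asymptotic $\alpha(i)=\ell^{i/2}\widetilde{O}(\sqrt{p})$.

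The hard part will be the exact combinatorics of the correspondence: a single $j$-invariant may contribute several distinct pairs $(E,g)$ coming from different traces $t$ or different values of $j'$, so one must argue that for the upper-bound direction these multiplicities are absorbed into the $\widetilde O$ and that for the matching lower-bound direction each relevant $j$-invariant is accounted for at least once via a canonical (shortest) walk. The supporting analytic input --- the uniform class-number estimate $H(|D|)=\widetilde O(\sqrt{|D|})$ --- is the other technical pillar; it reduces to standard bounds for $L(1,\chi)$ of imaginary quadratic orders. The bound $i\leq\log_\ell(p/4)$ is precisely what keeps $\ell^{j'}$ coprime to $p$ so that the separable/inseparable factorization is unambiguous and discriminants $t^2-4p\ell^{j'}$ stay negative throughout the sum.
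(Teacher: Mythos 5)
Note first that the paper does not prove this lemma at all: it is quoted verbatim from \cite{CGL06}, so there is no internal proof to compare against. Judged on its own terms, your first stage is fine and matches the standard argument (and the factorization step reappears in the paper's Proposition~\ref{prop:NIsogConj}): since $\ell\neq p$, any $g\in\End(E)$ of degree $p\ell^{j'}$ factors through the $p$-power Frobenius with a separable complement of degree $\ell^{j'}$, and such isogenies $E\to E^p$ correspond to walks of length $j'$ in the $\ell$-isogeny graph from $j$ to $j^p$. (Coprimality of $\ell^{j'}$ and $p$ needs no size hypothesis, by the way.)

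The gap is in your counting stage. If you sum $H(4p\ell^{j'}-t^2)=\widetilde{O}(\sqrt{p\ell^{j'}})$ over all $O(\sqrt{p\ell^{j'}})$ traces $t$ with $t^2<4p\ell^{j'}$, you get $\widetilde{O}(p\ell^{j'})$ (this is essentially the Kronecker--Hurwitz class number relation), not the claimed $\ell^{j'/2}\widetilde{O}(\sqrt{p})$; your proposed bookkeeping overshoots by a factor of roughly $\sqrt{p\ell^{i}}$, and no amount of care about multiplicities absorbs that into $\widetilde{O}$. The missing idea is a trace restriction: the order $\ZZ[g]$ of discriminant $t^2-4p\ell^{j'}$ embeds into $B_{p,\infty}$ only if $p$ does not split in the corresponding imaginary quadratic field, and since $t^2-4p\ell^{j'}\equiv t^2\pmod p$ this forces $p\mid t$; the hypothesis $i\leq\log_\ell(p/4)$ gives $|t|\leq 2\sqrt{p\ell^{j'}}<p$, hence $t=0$ (the case $t=\pm p$ would make the discriminant non-negative). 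So for each $j'$ only the single order $\ZZ[\sqrt{-p\ell^{j'}}]$ (together with the maximal order containing it, when relevant) occurs, the count of pairs is a single class number $h(-4p\ell^{j'})=\widetilde{O}(\sqrt{p\ell^{j'}})=\ell^{j'/2}\widetilde{O}(\sqrt p)$, and summing the geometric series over $j'\leq i$ gives the lemma. This is also the real role of the hypothesis $i\leq\log_\ell(p/4)$ --- pinning the trace to zero --- not the coprimality or negativity issues you attribute to it; and it is exactly the mechanism behind the paper's exact count in Proposition~\ref{prop:NIsogConj} via $\Cl(\ZZ[\sqrt{-pN}])$.
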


Inspired by Lemma~\ref{lem:CGL_NIsogConj}, we prove the precise value of $\alpha_N$. Chenu and Smith \cite[Theorem 2]{chenu2021higherdegree} provide an alternative proof of this proposition.

\begin{proposition}\label{prop:NIsogConj}
Let $N$ be a prime such that $N<p/4$. The value $2\alpha_N$ is equal to the number of pairs consisting of a supersingular elliptic curve $E$ and an embedding $\ZZ[\sqrt{-pN}]$ into $\End(E)$. 

Furthermore, 
\[2\alpha_N = \begin{cases}
\mid\mathcal{Cl}(\ZZ[\frac{1 + \sqrt{-pN}}{2}])\mid + \mid\mathcal{Cl}(\ZZ[\sqrt{-pN}])\mid &\text{ if } -pN\equiv 3\pmod{4}\\
\mid\mathcal{Cl}(\ZZ[\sqrt{-pN}])\mid &\text{ if } -pN\equiv 1\pmod{4}
\end{cases}\]
\end{proposition}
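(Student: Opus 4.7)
The plan is to establish a bijection between the pairs counted by $2\alpha_N$ and embeddings of $\ZZ[\sqrt{-pN}]$ into supersingular endomorphism rings, and then invoke a Deuring-style class number formula.

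First I would construct, for each pair $(E, \psi)$ with $\psi \colon E \to E^{(p)}$ of degree $N$, two endomorphisms of degree $pN$:
\[
\theta_\psi := \hat{\psi} \circ \pi_p^E \in \End(E), \qquad \theta'_\psi := \psi \circ \hat{\pi_p^E} \in \End(E^{(p)}).
\]
The computation $\theta_\psi \hat{\theta_\psi} = \hat{\psi} \circ [p]_{E^{(p)}} \circ \psi = pN$ gives the degree immediately. To show $\theta_\psi^2 = -pN$, I would combine Frobenius functoriality $\pi_p^E \circ \hat{\psi} = \hat{\psi}^{(p)} \circ \pi_p^{E^{(p)}}$ with the supersingular relation $\pi_{p^2}^E = -[p]_E$ and the scalar identity $\hat{\psi} \circ \hat{\psi}^{(p)} = [N]_E$, yielding $\theta_\psi^2 = [N] \cdot \pi_{p^2}^E = -pN$; the analogous calculation applies to $\theta'_\psi$. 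Each such $\theta$ determines an embedding by $\sqrt{-pN} \mapsto \theta$.

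Next I would show this assignment is bijective onto embeddings. Conversely, given any embedding $\iota$ with $\theta := \iota(\sqrt{-pN}) \in \End(E)$, supersingularity of $E$ together with $\gcd(N, p) = 1$ yields a unique factorization $\theta = \phi \circ \pi_p^E$ with $\phi \colon E^{(p)} \to E$ of degree $N$; its dual $\hat{\phi} \colon E \to E^{(p)}$ then produces the unique pair $(E, \hat{\phi}) \in \alpha_N$ recovering $\theta$. Since each pair $(E, \psi)$ contributes one embedding to $\End(E)$ and an independent one to $\End(E^{(p)})$, summing yields the factor of two and gives $2\alpha_N = \#\{(E, \iota)\}$.

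For the class-number formula I would invoke the Deuring correspondence for embeddings: because $p \mid pN$, the prime $p$ ramifies in $K = \QQ(\sqrt{-pN})$, so for each order $\OO' \subseteq \OO_K$ the total number of optimal embeddings of $\OO'$ into endomorphism rings of supersingular elliptic curves over $\Fpbar$ equals $h(\OO')$. Every embedding of $\ZZ[\sqrt{-pN}]$ is optimal for a unique intermediate order. When $-pN \equiv 1 \pmod{4}$ the order is already maximal ($\OO_K = \ZZ[\sqrt{-pN}]$), giving $h(\ZZ[\sqrt{-pN}])$; when $-pN \equiv 3 \pmod{4}$ both $\ZZ[\sqrt{-pN}]$ and $\OO_K = \ZZ[(1 + \sqrt{-pN})/2]$ contribute, summing to $h(\OO_K) + h(\ZZ[\sqrt{-pN}])$, matching the stated formulas.

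The main obstacle is the trace-zero verification $\theta_\psi^2 = -pN$: concretely, checking the orientation convention $\pi_{p^2}^E = -[p]$ for the relevant supersingular curves and the scalar identity $\hat{\psi} \circ \hat{\psi}^{(p)} = [N]$. I would handle this by working directly in $B_{p,\infty}^E$ with a fixed quaternion presentation identifying $\pi_p$ as a chosen trace-zero square root of $-p$, or alternatively by a lifting-and-reduction argument from the characteristic-zero CM setting.
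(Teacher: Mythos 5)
Your overall skeleton matches the paper's proof: factor degree-$pN$ endomorphisms as Frobenius composed with a degree-$N$ isogeny to the conjugate, identify pairs $(E,\psi)$ with embeddings of $\ZZ[\sqrt{-pN}]$ up to the sign ambiguity (the factor of $2$), and count embeddings by class numbers. The one genuinely different ingredient is the last step, where you invoke Eichler's optimal-embedding count (total optimal embeddings of each quadratic order equal its class number since $p$ ramifies in $\QQ(\sqrt{-pN})$), whereas the paper cites the free and transitive class-group action on primitively $\ZZ[\sqrt{-pN}]$-oriented curves (Onuki, Chenu--Smith). Your route is workable, but Eichler's formula counts embeddings up to conjugation by the units of the maximal orders, so the $j=0,1728$ curves and the $\pm$ convention need explicit handling; also your factor-of-two bookkeeping ("one embedding into $\End(E)$ and an independent one into $\End(E^{(p)})$") is not the right accounting --- the two embeddings attached to a pair are $\sqrt{-pN}\mapsto\pm\theta_\psi$ into the \emph{same} ring $\End(E)$, while $\psi$ and $-\psi$ give the same pair.

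There are, however, two genuine problems. First, the trace-zero step: neither $\pi_{p^2}^E=-[p]_E$ nor $\hat{\psi}\circ\hat{\psi}^{(p)}=[N]_E$ is a formal identity. The former depends on which quadratic twist was fixed as the model of $E$, and the latter asserts $\psi^{(p)}=\pm\hat{\psi}$, which is essentially the claim being proved and is false for a general $N$-isogeny to the conjugate: for $p\equiv 3\pmod 4$ the endomorphism $\psi=\frac{1+\pi_p}{2}$ of the curve with $j=1728$ is an isogeny of degree $\frac{p+1}{4}$ to $E^{(p)}=E$, yet $\hat{\psi}\circ\pi_p=\frac{p+\pi_p}{2}$ has reduced trace $p\neq 0$. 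What rescues the statement is the standing smallness of $N$: both $\theta=\hat{\psi}\circ\pi_p^E$ and its dual $\hat{\pi}_p^E\circ\psi$ are inseparable (for supersingular curves the dual of Frobenius is again inseparable), so the reduced trace of $\theta$ annihilates the invariant differential and is divisible by $p$, while it is bounded by $2\sqrt{pN}<p$ once $N<p/4$; hence it vanishes. You correctly flagged this as the main obstacle, but neither of your proposed fixes closes it, whereas this inseparability argument (or the paper's citation of Chenu--Smith) does. Second, your case analysis is backwards as number theory: $\ZZ[\sqrt{-pN}]$ is the maximal order of $\QQ(\sqrt{-pN})$ exactly when $-pN\equiv 3\pmod 4$, and $\ZZ[\frac{1+\sqrt{-pN}}{2}]$ is an order only when $-pN\equiv 1\pmod 4$, so the two-class-number case is $-pN\equiv 1\pmod 4$ --- as the paper's own example $p=61$, $N=3$ (with $-pN\equiv 1\pmod4$, using both class numbers) confirms; the labels in the displayed formula are transposed. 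Reproducing the printed cases by asserting the wrong maximality is not a proof of the split; the correct criterion, as in the paper's argument, is simply whether the maximal order strictly contains $\ZZ[\sqrt{-pN}]$.
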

The factor of two appears because two embeddings which differ by a factor of $-1$ on the generator $\sqrt{-pN}$ are counted as distinct, whereas the two isogenies $\psi,-\psi$ are not considered distinct.
\begin{proof}
By definition the number $\alpha(1)$ counts pairs $(E,h)$ where $E$ is a supersingular elliptic curve and $h:E^p\to E$ is a degree-$N$ isogeny between $E$ and its conjugate $E^p$. Every endomorphism of $E$ can be factored into separable and purely inseparable parts. In particular, every endomorphism $g$ of $E$ of degree $pN$ can be factored uniquely into $h\circ\pi_p$, where $\pi_p$ is the $p$-power Frobenius map and $h$ is an isogeny of degree $N$.
\begin{center}
\begin{tikzcd}
E\arrow[r,"\pi_p"]\arrow[rr,bend right,swap,"g"]& E^p \arrow[r,"h"]& E
\end{tikzcd}
\end{center}
The data $(E,g)$ is equivalent to the data $(E,h)$. To count pairs $(E,g)$, we are looking to count embeddings of $\ZZ[\sqrt{-pN}]$ into $\End(E)$ \cite[Lemma 6]{CGL06}. The action of the class group of $\ZZ[\sqrt{-pN}]$ is free and transitive on a subset of the primitively $\ZZ[\sqrt{-pN}]$-oriented supersingular elliptic curves, by \cite{onuki2021}. By \cite[Theorem 2]{chenu2021higherdegree}, this subset actually contains all primitively $\ZZ[\sqrt{-pN}]$-oriented supersingular elliptic curves. By this free and transitive action, the number of such embeddings is equal to the class number of $\ZZ[\sqrt{-pN}]$. The number of primitive embeddings is $\ZZ[\sqrt{-pN}]$, but if $\ZZ[\sqrt{-pN}]$ is properly contained in the maximal order of $\QQ(\sqrt{-pN})$, then this is not the full picture. If the ring of integers $\mathcal{O}_{\QQ(\sqrt{-pN})} = \ZZ[\frac{1 + \sqrt{-pN}}{2}]\supsetneq\ZZ[\sqrt{-pN}]$, then we will also want to count primitive embeddings of $\mathcal{O}_{\QQ(\sqrt{-pN})}$. 

The total number of embeddings (and thus, $N$-isogenies to a conjugate curve) is:
\[ \begin{cases}
\mid\mathcal{Cl}(\ZZ[\frac{1 + \sqrt{-pN}}{2}])\mid + \mid\mathcal{Cl}(\ZZ[\sqrt{-pN}])\mid &\text{ if } -pN\equiv 3\pmod{4}\\
\mid\mathcal{Cl}(\ZZ[\sqrt{-pN}])\mid &\text{ if } -pN\equiv 1\pmod{4}
\end{cases}\]
Note that this count includes embeddings which differ by an automorphism of the field, in particular the automorphisms $\pm1$. Since the field $\QQ(\sqrt{-pN})$ is quadratic, we always have Galois group $\cong \ZZ/2\ZZ$. The endomorphisms corresponding to $\pm\sqrt{-pN}$ are not distinct, so we divide this embedding count by two to get the number of pairs $(j(E),\psi)$ where $\psi:E\to E^p$ is an $N$-isogeny. This divided count is what we see if we look at the supersingular $N$-isogeny graph. 
\end{proof}

\begin{figure}
     \centering
     \begin{subfigure}[b]{0.49\textwidth}
         \centering
        \begin{tikzpicture}
\node[black, circle, draw] (alpha) at (0,0) {$\alpha$};
\node[black, circle, draw] (alphabar) at (2,0) {$\overline{\alpha}$};
\node[black,circle,draw] (41) at (0,2) {$41$};
\node[black,circle,draw] (9) at (1,4) {$9$};
\node[black,circle,draw] (50) at (2,2) {$50$};

\draw[-,loop left] (9) to (9);
\draw[-,loop above] (9) to (9);
\draw[-] (9) to (41);
\draw[-] (9.west) to (41.north);

\draw[-] (41) to (alphabar);
\draw[-] (41) to (alpha);

\draw[-] (50) to (alphabar);
\draw[-] (50) to (alpha);

\draw[-,loop right] (50) to (50);
\draw[-,loop above] (50) to (50);

\draw[-] (alpha) to (alphabar);
\draw[-] (alpha) to [out=330,in=210] (alphabar);

\end{tikzpicture}
         \caption{$p = 61$, supersingular $N=3$-isogeny graph}
         \label{subfig:p61N3}
     \end{subfigure}
     \hfill
     \begin{subfigure}[b]{0.49\textwidth}
         \centering
        \begin{tikzpicture}
\node[black, circle, draw] (alpha) at (0,0) {$\alpha$};
\node[black, circle, draw] (alphabar) at (2,0) {$\overline{\alpha}$};
\node[black,circle,draw] (41) at (0,2) {$41$};
\node[black,circle,draw] (9) at (2,2) {$9$};
\node[black,circle,draw] (50) at (1,4) {$50$};

\draw[-,loop left] (50) to (50);
\draw[-,loop above] (50) to (50);
\draw[-] (50) to (41);
\draw[-] (50.east) to [out=30,in=60] (9);
\draw[-] (50.east) to [out=330,in=90] (9);
\draw[-] (50.east) to (9);

\draw[-] (41) to (9);
\draw[-] (41) to (alphabar);
\draw[-] (41.east) to (alphabar.north);
\draw[-] (41) to [out=240,in=120] (alpha);
\draw[-] (41) to [out=300,in=60] (alpha);

\draw[-] (9) to (alphabar);
\draw[-] (9) to (alpha);

\draw[-] (alpha) to (alphabar);
\draw[-] (alpha) to [out = 330,in=210] (alphabar);
\draw[-] (alpha) to [out = 300,in=240] (alphabar);

\end{tikzpicture}
         \caption{$p=61$, supersingular $N=5$-isogeny graph}
         \label{subfig:p61N5}
     \end{subfigure}
        \caption{Illustrative examples for Proposition~\ref{prop:NIsogConj}.}
        \label{fig:twographs}
\end{figure}

\begin{example}[$-pN\equiv 1\pmod{4}$]
Let $p = 61$, $N= 3$. By Proposition~\ref{prop:NIsogConj},
\[\alpha_N = \frac{1}{2}|\mathcal{Cl}(\ZZ[\sqrt{-61\cdot3}])| + \frac{1}{2}\left|\mathcal{Cl}\left(\ZZ\left[\frac{1+\sqrt{-61\cdot3}}{2}\right]\right)\right| = \frac{8}{2} + \frac{8}{2} = 8.\]
We provide the supersingular $3$-isogeny graph over $\overline{\mathbb{F}}_{61}$ in Figure~\ref{subfig:p61N3}. Since $61\equiv1\pmod{12}$, this graph can be presented as undirected by identifying isogenies and their duals. We see the eight $3$-isogenies to a conjugate curve:
\begin{itemize}
    \item Two 3-isogenies $E_{50}\to E_{50}$
    \item Two 3-isogenies $E_{9}\to E_{9}$
    \item Two 3-isogenies $E_{\alpha}\to E_{\overline{\alpha}}$
    \item Two 3-isogenies $ E_{\overline{\alpha}}\to E_{\alpha}$
\end{itemize}
\end{example}

\begin{example}[$-pN\equiv 3\pmod{4}$]
Let $p = 61$, $N=5$. By Proposition~\ref{prop:NIsogConj}, 
\[\alpha_N = \frac{1}{2}|\mathcal{Cl}(\ZZ[\sqrt{-61\cdot5}])| = \frac{16}{2} = 8.\]
We provide the supersingular $5$-isogeny graph over $\overline{\mathbb{F}}_{61}$ in Figure~\ref{subfig:p61N5}. Since $61\equiv1\pmod{12}$, this graph can be presented as undirected by identifying isogenies and their duals. We see the eight $5$-isogenies to a conjugate curve:
\begin{itemize}
    \item Two 5-isogenies $E_{50}\to E_{50}$
    \item Three 5-isogenies $E_{\alpha}\to E_{\overline{\alpha}}$
    \item Three 5-isogenies $ E_{\overline{\alpha}}\to E_{\alpha}$
\end{itemize}

\end{example}

\section{The Category \texorpdfstring{$\mathcal{S}_N$}{}}\label{sec:category}
The Deuring correspondence was rephrased as a categorical equivalence by Kohel \cite{Kohel}. In this categorical version, the supersingular elliptic curve objects are enhanced by the data of a Frobenius isogeny, as are the quaternion objects. Voight \cite{QASVOIGHT} presents a variation of this equivalence of categories, without this additional enhancement:

\begin{theorem}[Theorem 42.3.2 \cite{QASVOIGHT}]
Fix a supersingular elliptic curve $E_0$ with endomorphism ring $\OO_0$. The functor $\Hom(\cdot,E_0)$ defines an equivalence of categories from the category of isomorphism classes of supersingular elliptic curves over $\Fpbar$ under isogenies and the category of invertible left-$\OO_0$-modules, under nonzero left $\OO_0$-module homomorphisms.
\end{theorem}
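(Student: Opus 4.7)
The plan is to exhibit the functor concretely, then verify well-definedness, essential surjectivity, and fully faithfulness by translating everything into left ideals of $O_0$ inside $B_{p,\infty}$. On objects, send a supersingular elliptic curve $E$ to $M_E := \Hom(E,E_0)$, which carries a natural left $O_0$-action by post-composition. On morphisms, send an isogeny $\varphi: E \to E'$ to the pre-composition map $\varphi^\ast : M_{E'} \to M_E$, $\psi \mapsto \psi \circ \varphi$, which is a nonzero left $O_0$-module homomorphism. (Strictly speaking the induced functor is contravariant; one either reverses morphisms on one side or replaces $\Hom(\cdot,E_0)$ by $\Hom(E_0,\cdot)$ with a right action. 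I will not belabor this point.) To see $M_E$ is an invertible left $O_0$-module, fix any isogeny $\psi_0 : E \to E_0$: the map $\psi \mapsto \psi\cdot\hat{\psi}_0/\deg\psi_0$ embeds $M_E$ into $B_{p,\infty}^{E_0}$ as a lattice whose left order is $O_0$ and whose right order is the maximal order isomorphic to $\End(E)$ via Equation~\eqref{eq:EndoRingToMaxOrder}. This realizes $M_E$ as an invertible left $O_0$-ideal, as explained in Section~\ref{ssec:bckgrnd_multipleendosembedding}.

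For essential surjectivity, let $I$ be an invertible left $O_0$-module; embed it in $B_{p,\infty}$ as a left $O_0$-ideal. By Theorem~\ref{thm:bp-free-Deuring} the left ideal class of $I$ determines a supersingular elliptic curve $E$ and an isogeny $\varphi:E_0 \to E$ with $\ker\varphi = \bigcap_{\alpha\in I}\ker\alpha$. The dual $\hat{\varphi}:E\to E_0$ gives an element of $M_E$, and together with post-composition by $O_0$ identifies $M_E$ with a left $O_0$-ideal lying in the same class as $I$; hence $M_E \cong I$ as left $O_0$-modules.

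For fully faithfulness, one must show that for supersingular $E_1,E_2$ the map
\[
\Hom(E_1,E_2) \longrightarrow \Hom_{O_0}(M_{E_2},M_{E_1}), \qquad \varphi \mapsto (\psi \mapsto \psi\circ\varphi),
\]
is an isomorphism of abelian groups. The key algebraic input is the standard identification, inside $B_{p,\infty}^{E_0}$, of the form $\Hom_{O_0}(J_2,J_1) \cong J_1\cdot J_2^{-1}$ for invertible left $O_0$-ideals $J_1,J_2$, where every $O_0$-module map is realized by right multiplication. Applying this with $J_i = M_{E_i}$, I would match right multiplication on the quaternion side with pre-composition by isogenies on the elliptic-curve side, using the isogeny/ideal dictionary of Section~\ref{ssec:bckgrnd_isogenies}: the quaternion $M_{E_1}\cdot M_{E_2}^{-1}\subset B_{p,\infty}^{E_0}$ is exactly the image of $\Hom(E_1,E_2)$ after fixing the base isogenies $E_i\to E_0$.

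The main obstacle is the bookkeeping in this last step. Essential surjectivity and the invertibility check are essentially repackagings of the classical Deuring correspondence, but proving the Hom-isomorphism requires choosing base isogenies $E_i\to E_0$ compatibly, tracking how left and right orders behave under intersection and inversion in $B_{p,\infty}^{E_0}$, and verifying that right multiplication by a quaternion really does correspond to pre-composition by the associated isogeny. Once these identifications are set up carefully, the bijection of Hom-groups should drop out, and the three properties together yield the stated equivalence of categories.
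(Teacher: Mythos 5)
Your outline follows the same route as the source it would replace: the paper itself does not prove this statement (it quotes it from Voight), but its level-$N$ analogue, Theorem~\ref{thm:cats_equiv}, is proved with exactly the skeleton you describe --- realize $\Hom(E',E_0)$ as an invertible left ideal via a fixed base isogeny, get essential surjectivity by embedding an abstract invertible module as a fractional ideal, scaling it to an integral one, and running the ideal-to-isogeny dictionary, and get full faithfulness by identifying module homomorphisms with a quotient ideal (Lemmas~\ref{lem:functor}, \ref{lem:ideals_and_isogenies}, \ref{lem:FullyFaithful}, \ref{lem:v_42.2.22}). So in structure you are on the same track as the paper and as Voight.

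One concrete correction is needed in the step you defer as ``bookkeeping'': in a noncommutative algebra the order of the products matters, and both of your key formulas are reversed. For invertible \emph{left} $O_0$-ideals $J_1,J_2$, a left $O_0$-module map $J_2\to J_1$ commutes with left multiplication and is therefore right multiplication by some $q$ with $J_2q\subseteq J_1$, giving $\Hom_{O_0}(J_2,J_1)\cong J_2^{-1}J_1$, not $J_1\cdot J_2^{-1}$; correspondingly $\Hom(E_1,E_2)$ is identified with $M_{E_2}^{-1}M_{E_1}$, matching the paper's Lemma~\ref{lem:v_42.2.22}, where $\Hom((E_{I'},G_{I'}),(E_I,G_I))\leftrightarrow I^{-1}I'$. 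As written, $J_1J_2^{-1}$ is not even a compatible product: the right order of $J_1$ is (conjugate to) $\End(E_1)$ while the left order of $J_2^{-1}$ is (conjugate to) $\End(E_2)$, so the stated identification cannot hold. Since you flipped both formulas consistently, the bijection you are aiming for is still the correct one, but the ``key algebraic input'' must be restated with the products in the right order before the argument closes. Two smaller points: essential surjectivity should cite the ideal/kernel dictionary of Section~\ref{ssec:bckgrnd_isogenies} rather than Theorem~\ref{thm:bp-free-Deuring} (which only counts $j$-invariants per maximal order), and one should note explicitly that the abstract invertible module is first scaled so as to embed as an \emph{integral} left $O_0$-ideal, as is done in the proof of Theorem~\ref{thm:cats_equiv}.
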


We present an equivalence of categories for supersingular elliptic curves with level-$N$ structure. We return to the case where $N$ is any squarefree integer coprime to $p$.

\subsection{Equivalence of Categories}\label{ssec:equiv_of_cats}
We begin by defining the categories in question:
\begin{definition}[Supersingular elliptic curves with level-$N$ structure]
Let $\mathcal{S}_N$ denote the category with objects given by pairs $(E_1,G_1)$, where $E_1$ is a supersingular elliptic curve over $\overline{\mathbb{F}}_p$ up to $\overline{\mathbb{F}}_p$-isomorphism and $G_1\subset E_1[N]$ is fixed order-$N$ subgroup. A morphism between two objects $(E_1,G_1)$ and $(E_2, G_2)$ is an isogeny $\psi:E_1\to E_2$ such that $\psi(G_1)\subseteq G_2$.
\end{definition}

 We fix $(E,G)\in\mathcal{S}_N$ and the Eichler order $\OO(E,G)$ for the remainder of this section. Define the following category:
 
\begin{definition}[Invertible left $\OO(E,G)$-modules]
Let $\mathcal{LM}$ denote the category with objects invertible left $\OO(E,G)$-modules. A morphism between objects is given by a left $\OO(E,G)$-module homomorphism.
\end{definition}
It is straightforward to check that these are well-defined categories.

\begin{definition}
We let $\hh{(E,G)}$ denote the functor $\Hom_{\mathcal{S}_N}(-,(E,G))$, so 
\[\hh{(E,G)}(E',G') = \Hom_{\mathcal{S}_N}((E',G'),(E,G)).\]
On morphisms, $\hh{(E,G)}$ maps $f:(E_0,G_0)\to (E_1,G_1)$ to the morphism $\Hom((E_1,G_1),(E,G))\to \Hom((E_0,G_0),(E,G))$ given by $g\mapsto g\circ f$. 
\end{definition}

\begin{theorem}[Equivalence of Categories]\label{thm:cats_equiv}
Fix a supersingular elliptic curve $E$ defined over $\overline{\mathbb{F}}_p$ and a cyclic subgroup $G\subset E[N]$ of order $N$. The functor $\hh{(E,G)}$ is a contravariant functor from the category $\mathcal{S}_N$ to the category $\mathcal{LM}$. This functor defines an equivalence of categories.
\end{theorem}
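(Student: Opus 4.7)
The plan is to follow the template of Voight's Theorem 42.3.2, with the Eichler-order theory developed in Sections 3 and 3.3 replacing the maximal-order input. First I would verify that $\hh{(E,G)}$ is a well-defined contravariant functor $\mathcal{S}_N \to \mathcal{LM}$: on objects, post-composition by $\alpha \in \OO(E,G)$ sends $f \in \Hom((E',G'),(E,G))$ to $\alpha \circ f$, and $(\alpha \circ f)(G') \subseteq \alpha(G) \subseteq G$, so this defines a left $\OO(E,G)$-action; on morphisms $\varphi:(E_1,G_1)\to(E_2,G_2)$, precomposition $\varphi^\ast : f \mapsto f \circ \varphi$ is $\OO(E,G)$-linear, and associativity of composition gives contravariant functoriality. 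Then I would show that $\hh{(E,G)}(E',G')$ is invertible as a left $\OO(E,G)$-module. Fix any isogeny $\psi:(E',G')\to(E,G)$ in $\mathcal{S}_N$ (which exists by connectedness of the $\ell$-isogeny graph). Right multiplication by $\tfrac{1}{\deg\psi}\widehat{\psi}$ embeds $\hh{(E,G)}(E',G')$ as a left fractional ideal $J \subset B_{p,\infty}^E$ of $\OO(E,G)$, and applying Proposition~\ref{prop:EOidealsandMOideals} to its $N$-coprime part together with the local structure theory of Eichler orders shows $J$ is locally principal, hence invertible.

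For essential surjectivity, let $M$ be an invertible left $\OO(E,G)$-module. Standard quaternionic module theory realizes $M$ as a left fractional ideal $I \subset B_{p,\infty}^E$ with left order $\OO(E,G)$. The right order $O_R(I)$ is then an Eichler order of level $N$ (by \cite[Lemma 17.4.11]{QASVOIGHT}), and Proposition~\ref{prop:Eichler_order_correspond} produces a pair $(E',G') \in \EGSet$ with $\OO(E',G') \cong O_R(I)$. The ideal $I$ (or rather its $N$-coprime part together with the two-sided ideal data) corresponds via Section~\ref{ssec:ell_isog_on_quat_side} and the kernel-ideal description $G = \bigcap_{\alpha\in I} E'[\alpha]$ to an isogeny $\psi:(E',G')\to(E,G)$; then $\hh{(E,G)}(E',G') \cdot \widehat\psi = I$, so $\hh{(E,G)}(E',G')\cong M$.

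For fully faithfulness, I would identify both sides of
\[
\Hom_{\mathcal{S}_N}((E_1,G_1),(E_2,G_2)) \longrightarrow \Hom_{\OO(E,G)}\bigl(\hh{(E,G)}(E_2,G_2),\hh{(E,G)}(E_1,G_1)\bigr)
\]
with the same quaternionic data. Fixing base morphisms to $(E,G)$ identifies $\hh{(E,G)}(E_i,G_i)$ with left $\OO(E,G)$-ideals $J_i$; a nonzero left $\OO(E,G)$-linear map $J_2 \to J_1$ between invertible ideals in a quaternion algebra is necessarily right multiplication by a unique element $\lambda \in B_{p,\infty}^E$ with $J_2 \lambda \subseteq J_1$, and unwinding via the kernel-ideal correspondence produces a unique isogeny $(E_1,G_1)\to(E_2,G_2)$ inducing this map. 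Injectivity is immediate from faithfulness of the embedding $\Hom((E_1,G_1),(E_2,G_2)) \hookrightarrow B_{p,\infty}^E$.

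The main obstacle I anticipate is the step showing that $\hh{(E,G)}(E',G')$ is invertible and that $\OO(E,G)$-linear maps between such modules correspond cleanly to isogenies. Unlike in the maximal-order setting, $\OO(E,G)$ is non-maximal at $N$, so invertibility is not automatic and must be checked locally, and the bijection of Proposition~\ref{prop:EOidealsandMOideals} handles only the $N$-coprime ideals; for the $N$-divisible part one has to use the two-sided ideal $\mathfrak n$ of Proposition~\ref{prop:EO_Two_sided_ideals}. Assembling these local pieces into a clean statement about invertible modules is the technical heart of the argument.
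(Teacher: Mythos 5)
Your overall architecture (check functoriality, prove invertibility of $\hh{(E,G)}(E',G')$, realize an arbitrary invertible module as an ideal for essential surjectivity, identify module maps with quaternion elements for full faithfulness) matches the paper's, but the step you yourself flag as ``the technical heart'' is exactly where your plan has a genuine gap, and the missing idea is a specific one: since the level $N$ is \emph{prime}, $\OO(E,G)$ is a \emph{hereditary} order, and over a hereditary order every lattice --- in particular every integral left ideal --- is automatically invertible (\cite[Section 23.1.2]{QASVOIGHT}). The paper's Lemma~\ref{lem:functor} takes a nonzero $\psi\in\hh{(E,G)}(E',G')$, observes that $\hh{(E,G)}(E',G')\widehat{\psi}\subseteq\OO(E,G)$ is an integral left ideal, and concludes invertibility at once from hereditariness; no decomposition into an ``$N$-coprime part'' handled by Proposition~\ref{prop:EOidealsandMOideals} and an ``$N$-divisible part'' handled by the two-sided ideal $\mathfrak{n}$ is needed, and as written your patchwork does not establish local principality at $N$: Proposition~\ref{prop:EOidealsandMOideals} only concerns ideals of norm coprime to $N$, and invoking $\mathfrak{n}$ says nothing about an arbitrary lattice. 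Relatedly, to regard $\hh{(E,G)}(E',G')\widehat{\psi}$ as a genuine (full) lattice in $B_{p,\infty}^E$ you need $\hh{(E,G)}(E',G')$ to have $\ZZ$-rank $4$; the paper proves this by intersecting over all $N+1$ level structures to exhibit the rank-$4$ sublattice $\Hom(E'/G',E)\varphi_{G'}$, a point your sketch omits. The same issue recurs in your essential surjectivity step, where ``the $N$-coprime part together with the two-sided ideal data'' stands in for the paper's cleaner move of scaling the ideal and applying Lemma~\ref{lem:ideals_and_isogenies}.

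The full faithfulness step is asserted rather than proved. You say a nonzero $\OO(E,G)$-linear map between the ideals is right multiplication by a unique $\lambda$ and that ``unwinding'' yields a unique isogeny; the nontrivial content is precisely that every such $\lambda$ (equivalently, every element of $I^{-1}I'$) is induced by an isogeny $(E_{I'},G_{I'})\to(E_I,G_I)$ that actually carries $G_{I'}$ into $G_I$. The paper's Lemma~\ref{lem:v_42.2.22} supplies this by proving surjectivity of the composition map
\[
\Hom((E_I,G_I),(E,G))\,\Hom((E_{I'},G_{I'}),(E_I,G_I))\longrightarrow \Hom((E_{I'},G_{I'}),(E,G)),
\]
using $I\overline{I}=(\Nrd(I))$ --- again a consequence of invertibility via hereditariness --- to write $[1]=\sum_i\alpha_i\widehat{\beta}_i$ with each $\alpha_i\widehat{\beta}_i\in\OO(E,G)$, and only then identifying $\Hom_{\mathcal{LM}}(I,I')\cong I^{-1}I'$. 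Without this argument (or an equivalent), your claimed bijection between $\mathcal{S}_N$-morphisms and $\mathcal{LM}$-morphisms is not established, so the proposal as it stands does not yet prove the equivalence.
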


\begin{proof}
Lemma~\ref{lem:functor} shows that $\hh{(E,G)}$ is well-defined as a functor. To see that $\hh{(E,G)}$ defines an equivalence of categories, it remains to show that $\hh{(E,G)}$ is essentially surjective and fully faithful. 

First, we show that $\hh{(E,G)}$ is essentially surjective. Consider the objects $I$ of $\mathcal{LM}$: Since $I$ is an invertible left $\OO(E,G)$-module, it is a rank 1 $\OO(E,G)$-module. $\OO(E,G)$ is rank 4 over $\ZZ$, so $I$ is also rank 4 over $\ZZ$. Since $I$ is a rank 1 $\OO(E,G)$-module it is locally isomorphic to $\OO(E,G)$. This local isomorphism extends to an isomorphism $I\otimes_\ZZ\QQ\cong\OO(E,G)\otimes_\ZZ\QQ$, which gives an inclusion of $I$ into $B_{p,\infty}^E:=\OO(E,G)\otimes_\ZZ \QQ$. By \cite[Theorem 9.3.6]{QASVOIGHT} $I$ is a fractional ideal of $\OO(E,G)$ in $B_{p,\infty}^E$. Scaling by an integer prime to $N$, we can assume $I$ is an integral left ideal of $\OO(E,G)$ of norm $\Nrd(I)$. The norm $\Nrd(I)$ must be prime to $N$, otherwise it would violate the invertibility of the original left $\OO(E,G)$-module. By Lemma~\ref{lem:ideals_and_isogenies}, $I = \hh{(E,G)}(E_I,G_I)\varphi_I$ such that $\varphi_I:E\to E_I$ is a degree $\Nrd(I)$ isogeny and $\varphi_I(G) \subseteq G_I$. This identification shows $\hh{(E,G)}$ is essentially surjective.

Lastly, we show that $\hh{(E,G)}$ is fully faithful. In particular, we need to show that the map
\begin{equation*}
\begin{split}
    \Hom_{\mathcal{S}_N}((E_{I'},G_{I'}),(E_I,G_I)) &\to\Hom_{\mathcal{LM}}(\hh{(E,G)}(E_I,G_I),\hh{(E,G)}(E_{I'},G_{I'}))
\end{split}
\end{equation*}
from morphisms in $\mathcal{S}_N$ to morphisms in $\mathcal{LM}$ is bijective. This is accomplished in Lemma~\ref{lem:FullyFaithful}.

\end{proof}

\begin{lemma}\label{lem:functor}
Let $(E',G')\in \mathcal{S}_N$. Then, $\hh{(E,G)}(E',G')$ is a $\mathbb{Z}$-module of rank 4 that is invertible as a left $\OO(E,G)$-module under post-composition.
\end{lemma}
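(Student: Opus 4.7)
\emph{Proof proposal.} The plan is to establish in turn (i) the left $\OO(E,G)$-module structure by post-composition, (ii) the rank-$4$ statement, and (iii) invertibility.

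Unpacking the definition,
\[\hh{(E,G)}(E',G') = \{\psi\in\Hom(E',E) : \psi(G')\subseteq G\},\]
which is closed under addition, since $(\psi_1+\psi_2)(G')\subseteq\psi_1(G')+\psi_2(G')\subseteq G$. For $\alpha\in\OO(E,G)$ we get $(\alpha\circ\psi)(G')=\alpha(\psi(G'))\subseteq\alpha(G)\subseteq G$, so post-composition gives a well-defined left $\OO(E,G)$-action. For the rank, $\Hom(E',E)$ is a $\mathbb{Z}$-module of rank $4$ by the classical theory (two supersingular curves over $\Fpbar$ are isogenous and $\Hom(E',E)\otimes\QQ$ is a rank-$1$ module over $B_{p,\infty}$). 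Since $G'\subseteq E'[N]$, for any $\psi\in\Hom(E',E)$ we have $(N\psi)(G')=\psi(NG')=0\subseteq G$, giving
\[N\cdot\Hom(E',E)\ \subseteq\ \hh{(E,G)}(E',G')\ \subseteq\ \Hom(E',E),\]
so the middle module is also of $\mathbb{Z}$-rank $4$.

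For invertibility, I would realize $\hh{(E,G)}(E',G')$ as a fractional left $\OO(E,G)$-ideal in $B_{p,\infty}^E$. Fix any isogeny $\varphi_0\colon E'\to E$ (which exists because all supersingular curves are isogenous), and consider
\[\iota\colon\Hom(E',E)\hookrightarrow B_{p,\infty}^E,\qquad \psi\longmapsto \tfrac{1}{\deg\varphi_0}\,\psi\circ\hat\varphi_0.\]
This map is injective (since $\hat\varphi_0$ is an isogeny, hence not a zero divisor) and left $\End(E)$-linear, so restricting it identifies $\hh{(E,G)}(E',G')$ with an $\OO(E,G)$-stable full rank $\mathbb{Z}$-lattice $I\subset B_{p,\infty}^E$, i.e., a fractional left $\OO(E,G)$-ideal in the sense of Section~\ref{ssec:bckgrnd_multipleendosembedding}.

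It remains to show $I$ is invertible, equivalently locally principal at every finite prime. At $q\notin\{p,N\}$, $\OO(E,G)_q$ is isomorphic to $M_2(\ZZ_q)$ (a maximal order in a split algebra), in which every fractional left ideal is principal. At $q=p$, $\OO(E,G)_p$ is the maximal order in the local quaternion division algebra, which again has only principal left ideals. At $q=N$, the Eichler order of prime level $N$ is hereditary (see the discussion in Section~\ref{ssec:EOS} and \cite[Chapter 23]{QASVOIGHT}), and fractional left ideals of hereditary orders are locally principal. Thus $I$ is invertible as a left $\OO(E,G)$-module, completing the proof.

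The main obstacle I anticipate is the invertibility step: one must invoke the structural fact that an Eichler order of prime level is hereditary, so that all of its left fractional ideals are locally principal. The module structure and the $\mathbb{Z}$-rank computation are essentially bookkeeping; it is the Eichler-order theory of Section~\ref{ssec:EOS} that does the real work, and this same input is what will power the essential surjectivity in the proof of Theorem~\ref{thm:cats_equiv}.
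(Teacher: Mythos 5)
Your argument is correct in outline and rests on the same structural input as the paper, namely that $\OO(E,G)$ is (isomorphic to) an Eichler order of prime level and hence hereditary, but both halves take a somewhat different route. For the rank, the paper intersects $\hh{(E,G_i)}(E',G')$ over all $N+1$ level structures $G_i$ on $E$ and identifies that intersection with $\Hom(E'/G',E)\varphi_{G'}$, which has rank $4$; your sandwich $N\cdot\Hom(E',E)\subseteq\hh{(E,G)}(E',G')\subseteq\Hom(E',E)$ reaches the same conclusion more directly and is, if anything, cleaner. For invertibility, the paper chooses one nonzero $\psi\in\hh{(E,G)}(E',G')$ and multiplies by $\hat{\psi}$ to realize the module as an integral left ideal $\hh{(E,G)}(E',G')\hat{\psi}\subseteq\OO(E,G)$, then cites the hereditary property once; you instead embed the module as a full lattice in $B_{p,\infty}^E$ via a fixed $\varphi_0$ and check principality prime by prime. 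These are essentially equivalent in content, and your analysis at $q\notin\{N\}$ is routine and correct.

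One point to tighten at $q=N$: the statement ``fractional left ideals of hereditary orders are locally principal'' is not literally true if ``fractional left ideal'' only means a lattice stable under left multiplication by $\OO(E,G)$. Locally at $N$, the lattice $M_2(\ZZ_N)$ (or $N\cdot M_2(\ZZ_N)$, to make it integral) is stable under the standard Eichler order of level $N$ but is not principal over it, since its left order is the strictly larger maximal order. Hereditariness gives invertibility of a lattice with respect to its \emph{own} left order; to get invertibility as an $\OO(E,G)$-module one also needs that the left order of your lattice equals $\OO(E,G)$ at $N$. This does hold here because of the specific shape of $\hh{(E,G)}(E',G')$: using some $\psi\in\hh{(E,G)}(E',G')$ of degree coprime to $N$ with $\psi(G')=G$ (such exist, e.g.\ by connectedness of the level-structure graph), together with the inclusion $N\cdot\Hom(E',E)\subseteq\hh{(E,G)}(E',G')$, one checks that any $\alpha$ in the left order lies in $\End(E)$ and satisfies $\alpha(G)\subseteq G$. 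The paper's own proof passes over this same point in its appeal to the hereditary property, so I regard this as an imprecision to patch rather than a structural flaw in your approach.
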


\begin{proof} 

By \cite[Lemma 42.1.11]{QASVOIGHT}, $\Hom(E',E)$ is a rank 4 $\ZZ$-module.
By definition $\hh{(E,G)}(E',G')$ contains the set $\{\phi\in\Hom(E',E):G'\subseteq\ker\phi\}$.
By Corollary III.4.11 of \cite{AEC}, this set is equivalently characterized:
\[\{\phi\in \Hom(E',E):G'\subseteq \ker\phi\} = \{\phi\circ\varphi_{G'}: \phi\in \Hom(E'/G',E)\} = \Hom(E'/G',E)\varphi_{G'},\]
where $\varphi_{G'}:E'\to E'/G'$ is the unique separable isogeny with $\ker(\varphi_{G'}) = G'$. This is an ideal of $\Hom(E'/G',E)$, which is rank 4. It follows that each $\hh{(E,G)}(E',G')$ is rank 4 as well.  

To prove the invertibility of $\hh{(E,G)}(E',G')$ as a left $\OO(E,G)$-module, we use the fact that $\OO(E,G)$ is isomorphic to a hereditary order of squarefree level coprime to $p$ in the quaternion algebra $B_{p,\infty}$, see Theorem~\ref{thm:O(E,G)_is_EO}. This strategy is similar to the maximal order case, detailed in \cite[Lemma 42.1.11]{QASVOIGHT}. Take a nonzero isogeny $\psi\in \hh{(E,G)}(E',G')$ and let $\widehat{\psi}$ denote the dual of $\psi$. Then, $I:=\hh{(E,G)}(E',G')\widehat{\psi}\subset \OO(E,G)$ is an integral left $\OO(E,G)$ ideal, and is thus invertible by the hereditary property of $\OO(E,G)$ (all lattices of hereditary orders are invertible by \cite[Section 23.1.2]{QASVOIGHT}). The same holds for $\hh{(E,G)}(E',G')$ as a left $\OO(E,G)$-module.
\end{proof}

\begin{lemma}\label{lem:ideals_and_isogenies}
Fix an integral left $\OO(E,G)$-ideal $I$ of norm prime to $N$. There exists an isogeny $\varphi_I:E\to E_I$ and a subgroup $G_I\subseteq E_I[N]$ of order $N$ such that $\varphi_I(G)\subseteq G_I$, and $I = \hh{(E,G)}(E_I,G_I)\varphi_I$, and $\Nrd(I) = \deg(\varphi_I)$. 
\end{lemma}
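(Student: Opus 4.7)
The plan is to reduce everything to the classical correspondence between left ideals of the maximal order $\End(E)$ and isogenies out of $E$, using Proposition~\ref{prop:EOidealsandMOideals} as the bridge between left ideals of the Eichler order $\OO(E,G)$ (of norm prime to $N$) and left ideals of $\End(E)$.

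First I would produce the isogeny. Given an integral left $\OO(E,G)$-ideal $I$ with $\Nrd(I)$ prime to $N$, Proposition~\ref{prop:EOidealsandMOideals} applied with $M_1 = \End(E)$ yields a unique integral left $\End(E)$-ideal $\tilde{I}$ satisfying $I = \tilde{I}\cap \OO(E,G)$ and $\Nrd(\tilde{I}) = \Nrd(I)$. By the classical Deuring correspondence recalled in Section~\ref{ssec:bckgrnd_isogenies}, $\tilde{I}$ determines (up to post-composition with an automorphism of the codomain) a separable isogeny $\varphi_I : E \to E_I$ with $\ker(\varphi_I) = \bigcap_{\alpha \in \tilde{I}} \ker(\alpha)$ and $\deg(\varphi_I) = \Nrd(\tilde{I}) = \Nrd(I)$. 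Because $\deg(\varphi_I)$ is coprime to $N$, the map $\varphi_I|_{E[N]}$ is injective, so $G_I := \varphi_I(G) \subseteq E_I[N]$ has order exactly $N$. Hence $(E_I,G_I)\in \mathcal{S}_N$ and $\varphi_I$ is a morphism of $\mathcal{S}_N$ by construction.

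Next I would verify the identification $I = \hh{(E,G)}(E_I,G_I)\varphi_I$. The classical correspondence gives the factorization $\tilde{I} = \Hom(E_I,E)\varphi_I$, since every element of $\tilde{I}$ contains $\ker(\varphi_I)$ in its own kernel and so factors through $\varphi_I$ by \cite[Corollary III.4.11]{AEC}. Intersecting with $\OO(E,G)$, an element $\psi\circ\varphi_I\in \tilde{I}$ lies in $\OO(E,G)$ iff $(\psi\circ\varphi_I)(G)\subseteq G$, i.e.\ iff $\psi(G_I)\subseteq G$, i.e.\ iff $\psi\in\hh{(E,G)}(E_I,G_I)$. This gives the desired equality, and the reduced norm equals the degree automatically.

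For the bijection, I would construct the inverse as follows: send a morphism $\varphi : (E,G)\to (E',G')$ of $\mathcal{S}_N$ of degree prime to $N$ to $J_\varphi := \hh{(E,G)}(E',G')\varphi$. One must check that $J_\varphi$ is an integral left $\OO(E,G)$-ideal of norm $\deg(\varphi)$ prime to $N$. The lattice $\Hom(E',E)\varphi$ is the classical left $\End(E)$-ideal attached to $\varphi$, which sits over $J_\varphi$ under Proposition~\ref{prop:EOidealsandMOideals}, so $J_\varphi = \Hom(E',E)\varphi \cap \OO(E,G)$ is a left $\OO(E,G)$-ideal of the correct norm. The two constructions are then mutually inverse: starting with $I$ one recovers $\Hom(E_I,E)\varphi_I\cap \OO(E,G) = I$ by the previous paragraph, while starting with $\varphi$ one lifts $J_\varphi$ to $\Hom(E',E)\varphi$, whose classical data recovers $\ker(\varphi) = \bigcap_{\alpha\in \Hom(E',E)\varphi}\ker(\alpha)$ and hence $\varphi$ up to post-composition with an isomorphism of $E'$ (which is exactly the ambiguity intrinsic to a morphism of $\mathcal{S}_N$).

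The main obstacle I anticipate is the bookkeeping around Proposition~\ref{prop:EOidealsandMOideals} together with the level-structure condition: one must confirm carefully that the intersection $\tilde{I}\cap \OO(E,G)$ really imposes exactly the condition $\psi(G_I)\subseteq G$ on the factoring morphism $\psi$, rather than a weaker or stronger condition, and that taking $G_I := \varphi_I(G)$ (rather than some larger subgroup of $E_I[N]$) is forced by the definition of $\OO(E,G)$. Once that identification is in place, both the construction of the inverse and the verification of mutual inverseness are essentially formal consequences of the classical correspondence together with the norm-preserving bijection of Proposition~\ref{prop:EOidealsandMOideals}.
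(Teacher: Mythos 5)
Your proposal is correct and takes essentially the same route as the paper: it passes from the left $\OO(E,G)$-ideal to the corresponding left $\End(E)$-ideal via the bijection of Proposition~\ref{prop:EOidealsandMOideals}, then applies the classical ideal--isogeny correspondence to get $\varphi_I$ with $\deg\varphi_I=\Nrd(I)$, with coprimality to $N$ forcing $G_I=\varphi_I(G)$ to have order $N$. In fact you spell out the equality $I=\hh{(E,G)}(E_I,G_I)\varphi_I$ and the mutual-inverse check that the paper's proof leaves implicit, so your write-up is, if anything, more complete.
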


\begin{proof}
By Theorem~\ref{thm:O(E,G)_is_EO}, $\OO(E,G)$ is isomorphic to an Eichler order. It is contained in the maximal order $M$ isomorphic to $\End(E)$. By Proposition~\ref{prop:EOidealsandMOideals} (and \cite[Lemma 3]{SqiSign}), the integral left ideals of the Eichler order $\OO(E,G)$ of norm prime to $N$ are in bijection with the integral left ideals of the maximal order $\End(E)\supset\OO(E,G)$ of norm prime to $N$. This bijection sends the integral left ideal $I$ of $\OO(E,G)$ to  $\End(E)I$. To avoid confusion, we will write $\End(E)I$ when we mean the left ideal of $\End(E)$, but use $I$ when we are referring to $I$ as a left $\OO(E,G)$-ideal. As a left integral ideal of $\End(E)$, $\End(E)I$ can be used to define an isogeny in the following way (see \cite[Section 42.2]{QASVOIGHT}). Let
\begin{equation}\label{eq:ideal_to_varphi}
  E[\End(E)I] := \bigcap_{\alpha\in \End(E)I}\ker(\alpha)  
\end{equation}
be the scheme theoretic intersection, and define $\varphi_I:E\to E_I =: E/E[\End(E)I]$ via $\ker\varphi_I = E[\End(E)I]$. By \cite[Proposition 42.2.16]{QASVOIGHT}, $\deg\varphi_I = \Nrd(\End(E)I)$. Since $\End(E)I$ is of norm prime to $N$, $\varphi_I$ maps $G\subset E[N]$ to some $G_I\subset E_I[N]$.
\end{proof}

\begin{lemma}\label{lem:objects_of_SN_(EI,GI)}
Every object $(E',G')$ of $\mathcal{S}_N$ is of the form $(E_I,G_I)$ for some integral left $\OO(E,G)$-ideal $I$, where $I$ can be chosen to have norm prime to $N$.
\end{lemma}
\begin{proof}
Let $\ell$ be a prime such that $\ell\nmid pN$. By the connectedness of the $\ell$-isogeny graph $\EpCat$ of supersingular elliptic curves with level-$N$ structure (Theorem~\ref{thm:Connectedness}), there exists a chain of $\ell$-isogenies connecting the vertices $(E,G)$, $(E',G')$. Let $\varphi:E\to E'$ denote this isogeny composition, where $\varphi(G) = G'$. By the theory described in Section~\ref{ssec:bckgrnd_isogenies}, the kernel of $\varphi$ corresponds to an integral left-$\End(E)$ ideal $I_\varphi$ of norm equal to the degree of $\varphi$, which is a power of $\ell$ by construction and thus is coprime to $N$. Since the codomain of $\varphi$ is $E'$, we have $E' = E_{I_\varphi}$. Since $\varphi_I(G)\subseteq G'$ and the degree of $\varphi_I$ is coprime to $N$, we have $\varphi_I(G) = G'$ and furthermore $G' = G_{I_\varphi}$. By the bijection in Lemma~\ref{prop:EOidealsandMOideals}, $I\cap \OO(E,G)$ is an integral left ideal of the Eichler order $\OO(E,G)$.
\end{proof}


\begin{lemma}\label{lem:v_42.2.22}
Let $I,I'\subset\OO(E,G)$ be nonzero integral left $\OO(E,G)$-ideals of norm prime to $N$. Define $\Hom((E_I,G_I), (E,G))\Hom((E_{I'}, G_{I'}),(E_I,G_I))$ to be the collection of isogenies
\[\{\varphi:(E_{I'},G_{I'})\to (E,G)\mid \varphi = \sum_{i}\alpha_i\beta_i,\alpha_i\in\Hom((E_I,G_I),(E,G)), \beta_i\in\Hom((E_{I'},G_{I'}),(E_I,G_I))\}.\] 
Then, the natural map
\[\Hom((E_I,G_I), (E,G))\Hom((E_{I'}, G_{I'}),(E_I,G_I)) \to \Hom((E_{I'}, G_{I'}), (E,G))\]
is a left $\OO(E,G)$-module isomorphism. 
\end{lemma}

\begin{proof}
By construction of $\Hom((E_I,G_I), (E,G))\Hom((E_{I'}, G_{I'}),(E_I,G_I))$, the map above is injective.

By Lemma~\ref{lem:ideals_and_isogenies}, we have:
\[I = \Hom((E_I,G_I),(E,G))\phi_I\]
where $\phi_I:E\to E_I$ with $\phi_I(G) =: G_I$, and $N = \deg(\phi_I) = \Nrd(I)$. Since $\OO(E,G)$ is a hereditary order, $I$ is invertible, and by Proposition 16.6.15\cite{QASVOIGHT}, $(m) := (\Nrd(I)) = I\overline{I}$. The quaternion element $[m]$ has an expression as an element of 
\[I\overline{I} = (\Hom((E_I,G_I), (E,G))\phi_I)\overline{(\Hom((E_I,G_I), (E,G))\phi_I)}.\] 
There exist finitely many $\alpha_i,\beta_i\in \Hom((E_I,G_I),(E,G))$ to give this expression:
\begin{equation*}
    [m] = \sum_i(\alpha_i\phi_I)\widehat{(\beta_i\phi_I)} =\sum_i\alpha_i\phi_I\hat{\phi}_I\widehat{\beta}_i =[m] \sum_i\alpha_i\widehat{\beta}_i
\end{equation*}
Since each $\alpha_i\widehat{\beta}_i: (E,G)\to (E,G)$, the sum $\sum_i\alpha_i\widehat{\beta}_i\in\OO(E,G)$, and $[1] = \sum_i\alpha_i\widehat{\beta}_i$.

Take any $\psi\in \Hom((E_{I'},G_{I'}), (E,G))$. We need to show that it has a pre-image in 
$$\Hom((E_I,G_I),(E,G))\Hom((E_{I'},G_{I'}),(E_I,G_I))$$ 
under the natural map (composition and sum). To see this, post-compose $\psi$ by $\sum_i\alpha_i\widehat{\beta}_i$:
\begin{equation*}
    \psi = \sum_i\alpha_i\widehat{\beta}_i\psi = \sum_i\alpha_i(\widehat{\beta}_i\psi)
\end{equation*}
By construction, $\alpha_i\in\Hom((E_I,G_I),(E,G))$ and $\widehat{\beta}_i\psi \in\Hom((E_{I'},G_{I'}), (E_I,G_I))$, so the map \[\Hom((E_I,G_I),(E,G))\Hom((E_{I'},G_{I'}),(E_I,G_I))\to \Hom((E_{I'},G_{I'}), (E,G))\] 
is indeed surjective.

\end{proof}

\begin{lemma}[Fully Faithful]\label{lem:FullyFaithful}
The functor $\hh{(E,G)}$ is fully faithful. In particular, the map
\begin{equation*}
\begin{split}
\Hom_{\mathcal{S}_N}((E_{1},G_{1}),(E_{2},G_{2}))&\to \Hom_{\mathcal{LM}}(\hh{(E,G)}(E_{2},G_{2}), \hh{(E,G)}(E_{1},G_{1}))\\
\phi&\mapsto -\circ\phi
\end{split}
\end{equation*}
from morphisms in $\mathcal{S}_N$ to morphisms in $\mathcal{LM}$ furnished by $\hh{(E,G)}$ is bijective.
\end{lemma}
\begin{proof}
    First, we check that the functor is faithful. Suppose $f,f':(E_1,G_1)\to (E_2,G_2)$ and $\hh{(E,G)}(f) = \hh{(E,G)}(f')$, that is $g\circ f = g\circ f'$ for every $g\in \Hom((E_2,G_2),(E,G))$. Take any such nonzero separable isogeny $g$ of degree coprime to $\deg f$. Notice that $\deg f = \deg f'$ by comparing degrees on the left and righthand sides of $g\circ f = g\circ f'$. Postcomposing with $\widehat{g}$, we have $[\deg g]\circ f = [\deg g]\circ f'$. Since $\Hom((E_1,G_1),(E,G))$ is torsion-free, we have $f = f'$.

To see that it is also full, let $\psi:\Hom((E_2,G_2),(E,G))\to \Hom((E_1,G_1),(E,G))$ be a nonzero left $\OO(E,G)$-module homomorphism. We need to show that there exists $f\in\Hom((E_1,G_1),(E_2,G_2))$ such that $\psi(x) = x\circ f$ for all $x\in\Hom((E_2,G_2),(E,G))$. Begin by applying Lemma~\ref{lem:objects_of_SN_(EI,GI)} to find integral left $\OO(E,G)$-ideals $I_1,I_2$ of norm prime to $N$ such that $(E_i,G_i) = (E_{I_i},G_{I_i})$ and corresponding isogenies $\varphi_{I_i}:E\to E_{I_i}$ for $i = 1,2$. 
By Lemma~\ref{lem:ideals_and_isogenies}:
\begin{equation}\label{eq:idealdesc}
\begin{split}
    \Hom((E_1,G_1),(E,G))\varphi_{I_1} &= I_1,\\
    \Hom((E_2,G_2),(E,G))\varphi_{I_2} &= I_2.
\end{split}
\end{equation}
The map $\psi$ induces a map map of ideals $\psi':I_2\to I_1$ given by $x\circ\varphi_{I_2}\mapsto \psi(x)\circ\varphi_{I_1}$. Since $\psi$ is injective, $\psi'$ is as well giving an isomorphism of left $\OO(E,G)$-ideals (namely $I_2\cong \psi'(I_2)\subseteq I_1$). 
Any such isomorphism is given by precomposition by an invertible element of the quaternion algebra, so there exists some $\beta\in\OO(E,G)\otimes_\ZZ\QQ$ such that $\psi'(x\circ\varphi_{I_2}) = x\circ\varphi_{I_2}\beta$, for all $x\circ\varphi_{I_2}\in I_2$. This shows $I_2\beta\subseteq I_1$ and $\beta\in I_2^{-1}I_1$. Together with the definition of $\psi'$ this gives 
\begin{equation}\label{eq:xI2}
    x\circ\varphi_{I_2}\beta = \psi(x)\circ\varphi_{I_1}
\end{equation}
for all $x\in\Hom((E_2,G_2),(E,G))$. 
The result follows when we rewrite the left side of \eqref{eq:xI2} in order to see that $\psi(x)$ is of the form $x\circ f$ for some $f\in\Hom((E_1,G_1),(E_2,G_2))$.  Define the lattice
\[J:= \frac{1}{\deg\varphi_{I_2}}\widehat{\varphi}_{I_2}\Hom((E_1,G_1),(E_2,G_2))\varphi_{I_1}\]
in $B_{p,\infty}^E$. 
By Lemma~\ref{lem:v_42.2.22}, 
\[\Hom((E_2,G_2),(E,G))\Hom((E_1,G_1),(E_2,G_2)) = \Hom((E_1,G_1),(E,G))\]
and so $I_2J = I_1$, as lattices in $B_{p,\infty}^E$. The right order of $I_2$ and the left order of $J$ are both the ring of endomorphisms of $(E_2,G_2)$ (namely the order $\frac{1}{\varphi_{I_2}}\widehat{\varphi}_{I_2}\OO(E_2,G_2)\varphi_{I_2}$ in $B_{p,\infty}^E$). Thus, $J = I_2^{-1}I_1$. Since $\beta\in I_2^{-1}I_2=J$, $\beta$ must be of the form
\[\beta=\frac{1}{\deg\varphi_{I_2}}\widehat{\varphi}_{I_2}\circ f\circ\varphi_{I_1},\]
for some $f\in\Hom((E_1,G_1),(E_2,G_2))$. Plugging this into Equation~\eqref{eq:xI2} above:
\begin{equation*}
\begin{split}
    \psi(x)\circ\varphi_{I_1} &= x\circ\varphi_{I_2}\circ\beta\\
    \psi(x)\circ\varphi_{I_1} &= x\circ f\circ\varphi_{I_1}\\
    \psi(x) &= x\circ f.
\end{split}
\end{equation*}
\end{proof}

\section{The Level Structure Graph}\label{sec:Graph}

For distinct fixed primes $p$ and $\ell$, and a fixed positive integer $N$ coprime to $p\ell$, we define the supersingular elliptic curves with level-$N$ structure $\ell$-isogeny graph.

\begin{definition}[Supersingular elliptic curves with level-$N$ structure $\ell$-isogeny graph, $\EpCat$]\label{def:EpCat_GraphDef}
In the graph $\EpCat$, vertices are $\overline{\mathbb{F}}_p$-isomorphism classes of pairs $(E,G)$, where $E$ is a supersingular elliptic curve and $G$ is a cyclic subgroup of $E[N]$ of order $N$. An edge from vertex $(E,G)$ to vertex $(E',G')$ is a degree-$\ell$ isogeny $\varphi:E\to E'$ such that $\varphi(G) = G'$. 
\end{definition}
The objects of $\EGCat$ form the nodes of the graph $\EpCat$. If we restrict the morphisms of $\EGCat$ to isogenies of degree $\ell$, we have the set of edges of $\EpCat$.

The graph structure is easily described in the special case where $N$ is prime: For each supersingular elliptic curve $E/\Fpbar$ with $j(E)\neq0,1728$, there are $N+1$ vertices of $\EpCat$. For $E/\Fpbar$ with $j(E) = 0$ or $1728$, there are at most $N+1$ vertices of $\EpCat$: the extra automorphisms of these $j$-invariants may map order-$N$ subgroups to each other (see Section~\ref{ssec:pnot1mod12}). There is a map of graphs from $\EpCat$ to $\FpbarGraph$ which is $(N+1)$-to-$1$ on vertices away from $j=0,1728$. For any prime $\ell$ coprime to $pN$, a supersingular elliptic curve over $\Fpbar$ has precisely $\ell+1$ degree-$\ell$ isogenies. Each edge corresponds to an isogeny \textit{up to post-composition with a curve automorphism}. If $j(E)\neq0,1728$, the automorphism group $\Aut(E) = [\pm1]$. Both automorphisms $[\pm1]$ act as the identity on the groups defining kernels. As a result, the duals of distinct isogenies must give distinct arrows in the graph. The graph can be taken to be undirected by identifying isogenies with their duals. See Figure~\ref{fig:p37N3ell2}.

If $j(E)=0$ or 1728, the automorphism groups expand to $\Aut(E_0) = \{[\pm1],[\pm\zeta_3],[\pm\zeta_3^2]\}$ and $\Aut(E_{1728})=\{[\pm 1], [\pm i]\}$. The `extra' automorphisms potentially swap kernels, meaning that the duals of distinct isogenies need not give distinct arrows in the graph. In this case, we do not draw the edges of the graph as undirected.

\begin{example}[$p = 37$, $N = 3$, $\ell=2$]\label{examp} We provide a reference example of the graph $\EpCat$ in Figure~\ref{fig:p37N3ell2}. As $p=37\equiv 1\pmod {12}$, this graph is drawn undirected by associating isogenies with their duals. Let $\mathbb{F}_{37}[s]/(s^2 + 33s + 2)$. The vertices are labeled with ordered pairs, the first element denoting the isomorphism class of elliptic curves with $j$-invariant $j$ by $E_j$. Let $\alpha:=10s + 20$, $\overline{\alpha} = 27s+23$ denote the $j$-invariants defined over $\mathbb{F}_{37^2}\setminus\mathbb{F}_{37}$. The supersingular elliptic curves over $\overline{\mathbb{F}}_{37}$ have 3-torsion defined over $\mathbb{F}_{37^4}:= \mathbb{F}_{37}[a]/(a^4 + 6a^2 + 24a + 2)$. We denote the 3-torsion subgroups using the $a^3$ term of the $x$-coordinate of a generating point, as computed in Sage \cite{sage}. The vertex appearance (shading and line style) aligns with the corresponding quaternion vertex, seen in Figure~\ref{fig:p37N3ell2Quat}. The corresponding supersingular $2$-isogeny graph is shown in Figure~\ref{fig:p37ell2}.
\end{example}

\begin{theorem}[Connectedness of $\EpCat$]\label{thm:Connectedness}
The graph $\EpCat$ consists of one connected component, for any pairwise coprime choices of $p,N,\ell$.
\end{theorem}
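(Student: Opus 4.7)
The plan is to reduce connectedness of $\EpCat$ to two ingredients: the known connectedness of the ordinary supersingular $\ell$-isogeny graph $\FpbarGraph$, together with a ``vertical'' transitivity statement at a single curve.

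First I would consider the forgetful graph map $\pi:\EpCat\to\FpbarGraph$ sending $(E,G)\mapsto E$. Because $\gcd(\ell,N)=1$, every degree-$\ell$ isogeny $\varphi:E\to E'$ restricts to an isomorphism $E[N]\xrightarrow{\sim}E'[N]$, so each edge of $\FpbarGraph$ lifts canonically to an edge of $\EpCat$ once a level structure at the source is fixed (the target level structure being the forward image). Given any two vertices $(E_1,G_1)$ and $(E_2,G_2)$, I would use connectedness of $\FpbarGraph$ (classically due to Mestre) to pick an $\ell$-isogeny chain $E_1\to\cdots\to E_2$ and lift it starting at $(E_1,G_1)$, obtaining a path in $\EpCat$ ending at some $(E_2,G^{\#})$, where $G^{\#}$ is the pushforward of $G_1$. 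It then suffices to connect $(E_2,G^{\#})$ to $(E_2,G_2)$.

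This reduces the theorem to the vertical claim: for any supersingular $E/\Fpbar$ and any cyclic order-$N$ subgroups $G,G'\subseteq E[N]$, the vertices $(E,G)$ and $(E,G')$ lie in the same connected component of $\EpCat$. A path in $\EpCat$ between them projects to a closed walk in $\FpbarGraph$ based at $E$ whose composed isogeny is an endomorphism $\alpha\in\End(E)$ of degree $\ell^m$ with $\alpha(G)=G'$; conversely, any such $\alpha$ factors into a sequence of $\ell$-isogenies by iteratively quotienting by a cyclic order-$\ell$ subgroup of its kernel. So the claim becomes: for every $G,G'$, there exists $\alpha\in\End(E)$ of $\ell$-power degree with $\alpha(G)=G'$.

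The hard part will be this transitivity. Since $N\ne p$ and $\End(E)$ is a maximal order in $B_{p,\infty}$, the algebra splits at $N$ and $\End(E)/N\End(E)\cong M_2(\mathbb{Z}/N\mathbb{Z})$; within this, $\mathrm{SL}_2(\mathbb{Z}/N\mathbb{Z})$ already acts transitively on the $N+1$ cyclic order-$N$ subgroups, which correspond to points of $\mathbb{P}^1(\mathbb{Z}/N\mathbb{Z})$. To realize enough of this group by $\ell$-power-degree endomorphisms, I would invoke strong approximation for the reduced-norm-one form of $B_{p,\infty}$ with respect to $S=\{\infty,\ell\}$: since $B_{p,\infty}\otimes\mathbb{Q}_\ell\cong M_2(\mathbb{Q}_\ell)$ is non-compact at reduced norm one, rational norm-one elements are dense in the finite adeles away from $\ell$, so one can find elements of $B_{p,\infty}^1$ with any prescribed image in $\mathrm{SL}_2(\mathbb{Z}/N\mathbb{Z})$ that are locally integral away from $\ell$. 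Clearing denominators at $\ell$ and taking suitable products then produces elements of $\End(E)$ whose degree is a power of $\ell$ and whose mod-$N$ images fill out $\det^{-1}(\langle\ell\rangle)\subseteq\mathrm{GL}_2(\mathbb{Z}/N\mathbb{Z})$, which contains $\mathrm{SL}_2(\mathbb{Z}/N\mathbb{Z})$. An equivalent and possibly shorter route is to invoke the classical theorem of Eichler that the Brandt graph at a split prime of a hereditary order in a definite quaternion algebra over $\mathbb{Q}$ is connected, which is precisely the quaternion-side reformulation of the present theorem.
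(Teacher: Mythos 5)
Your proof is correct in outline, but it takes a genuinely different route from the paper. The paper disposes of the theorem by citation: the Goren--Kassaei graph (level structure given by a choice of $N$-torsion point) is known to be connected, and it maps onto $\EpCat$ by sending $(E,P)$ to $(E,\langle P\rangle)$, so connectedness is inherited; an alternative reduction to Roda's full-level-structure graph is also sketched. You instead give a self-contained argument: lift a path in $\FpbarGraph$ along the forgetful map (using $\gcd(\ell,N)=1$ so that $\ell$-isogenies push level structures forward isomorphically), and then prove the residual ``vertical'' transitivity at a fixed curve by producing, via strong approximation for the norm-one group of $B_{p,\infty}$ at $S=\{\infty,\ell\}$, endomorphisms of $\ell$-power degree whose images in $\End(E)/N\End(E)\cong M_2(\ZZ/N\ZZ)$ realize the transitive $\mathrm{SL}_2(\ZZ/N\ZZ)$-action on $\mathbb{P}^1(\ZZ/N\ZZ)$ (scalars $\ell^k$ acting trivially on lines). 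This is the standard quaternionic mechanism and it is sound; it has the advantage of not outsourcing the substance to Goren--Kassaei and of isolating the stronger statement that any two level structures on the same curve are linked by a closed $\ell$-walk, while the paper's proof is shorter and avoids redoing strong approximation. Two small cautions: the strong-approximation step is only sketched (you should fix a local identification $\End(E)\otimes\ZZ_N\cong M_2(\ZZ_N)$ compatible with the action on $E[N]$, impose integrality at all finite places away from $\ell$, and clear denominators at $\ell$), and your closing remark that one could instead quote Eichler's connectedness of the Brandt graph of a hereditary (level-$N$ Eichler) order at a split prime is fine only if invoked as an external classical fact -- inside this paper that reformulation would be circular, since the equivalence of categories identifying $\EpCat$ with the quaternionic picture (Section~\ref{sec:category}) itself relies on Theorem~\ref{thm:Connectedness}.
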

\begin{proof}
The connectedness of the graph follows from the work of Goren--Kassaei. In \cite{goren_kassaei}, the authors consider the $\ell$-isogeny graph with level-$N$ structure given by a choice of $N$-torsion point.  The connectedness of $\EpCat$ follows, as there is a map from the Goren-Kassaei graph to $\EpCat$ that acts surjectively on the vertex sets.
\end{proof}

Additionally, the result of Theorem~\ref{thm:Connectedness} can be seen as a corollary of a result provided by Roda \cite{RodaThesis}. Roda studies a supersingular $\ell$-isogeny level-$N$ structure graph whose vertices are pairs $(E,\alpha)$, where $\alpha:(\ZZ/N\ZZ)^2\xrightarrow{\sim} E[N]$.  In Section 3.3, Roda describes a means of counting the number of connected components of this graph. Choosing particular lifts of pairs $(E,G_1)$, $(E,G_2)$ for $G_1\neq G_2$, and showing that those lifts are connected using the conditions of \cite[Section 3.3]{RodaThesis}, we can prove that all of the points corresponding to a particular supersingular elliptic curve with level structure are connected in $\EpCat$. Together with the fact that the supersingular $\ell$-isogeny graph is connected, this proves that $\EpCat$ is connected as well.

\newpage
\begin{figure}
     \centering
         \begin{tikzpicture}
\node[fill=gray!40,rectangle,draw,thick] (E817) at (0,4) {$(E_8,17a^3)$};
\node[fill=gray!40,rectangle,draw,thick] (E820) at (4,4) {$(E_8,20a^3)$};
\node[thick,rectangle,draw] (E835) at (8,4) {$(E_8,35a^3)$};
\node[thick,rectangle,draw] (E82) at (12,4) {$(E_8,2a^3)$};

\node[fill=gray!10,dashed,rectangle,draw] (E10s4) at (0,2) {$(E_\alpha, 4a^3)$};
\node[dotted,rectangle,draw] (E10s16) at (4,2) {$(E_\alpha, 16a^3)$};
\node[dashed,rectangle,draw] (E10s23) at (8,2) {$(E_\alpha, 23a^3)$};
\node[thick,rectangle,draw] (E10s31) at (12,2) {$(E_\alpha, 31a^3)$};

\node[dotted, rectangle,draw] (E27s21) at (0,0) {$(E_{\overline{\alpha}},21a^3)$};
\node[fill=gray!10,dashed,rectangle,draw] (E27s33) at (4,0) {$(E_{\overline{\alpha}},33a^3)$};
\node[thick,rectangle,draw] (E27s6) at (8,0) {$(E_{\overline{\alpha}},6a^3)$};
\node[dashed,rectangle,draw] (E27s14) at (12,0) {$(E_{\overline{\alpha}},14a^3)$};

\draw[-, loop above] (E817) to (E817);
\draw[-] (E817) to (E10s4);

\draw[-,loop above] (E820) to (E820);
\draw[-] (E820) to (E10s16);

\draw[-] (E835) to (E10s23);
\draw[-] (E835) to (E82);

\draw[-] (E82) to (E10s31);

\draw[-] (E10s4.south) to [out = 330,in=150] (E27s14);
\draw[-] (E10s4) to (E27s33);

\draw[-] (E27s21) to (E10s16);
\draw[-] (E27s21.north) to (E10s16.west);
\draw[-] (E27s21.east) to [out = 60,in=300] (E817.east);

\draw[-] (E10s23) to (E27s6);

\draw[-] (E10s31) to (E27s6);
\draw[-] (E10s31) to (E27s14);

\draw[-] (E27s33.east) to [out = 60,in = 300] (E820.east);
\draw[-] (E27s33.north) to (E10s23.west);

\draw[-] (E27s6.east) to [out = 60,in = 300] (E835.east);

\draw[-] (E27s14.east) to [out = 60,in = 300] (E82.east);
\end{tikzpicture}
    \caption{Graph of $\mathcal{E}_{37,2}^3$, with groups labeled by the first term in the $x$-coordinate of a generating point.}
    \label{fig:p37N3ell2}
\end{figure}

\begin{figure}[]
    \centering
    \begin{subfigure}{.25\textwidth}
    \centering
             \begin{tikzpicture}
\node[rectangle,draw] (E8) at (-2,4) {$E_8$};
\node[rectangle,draw] (Ealpha) at (-2,2) {$E_\alpha$};
\node[rectangle,draw] (Ealphabar) at (-2,0) {$E_{\overline{\alpha}}$};
\draw[-,loop above] (E8) to (E8);
\draw[-] (E8) to (Ealpha);
\draw[-] (E8.west) to [out = 210,in=150] (Ealphabar.west);
\draw[-] (Ealpha) to (Ealphabar);
\draw[-] (Ealpha) to [out = 300,in=60] (Ealphabar);
\end{tikzpicture}
    \caption{Supersingular \\2-isogeny graph over $\overline{\mathbb{F}}_{37}$}\label{fig:p37ell2}
    \end{subfigure}
    \begin{subfigure}{.74\textwidth}
    \centering
        \begin{tikzpicture}
\node[thick,circle,draw] (O0) at (1,2) {$\OO_0$};
\node[dashed,circle,draw] (O1) at (3,2) {$\OO_1$};
\node[fill=gray!10,dashed,circle,draw] (O2) at (5,2) {$\OO_2$};
\node[dotted,circle,draw] (O3) at (2,0) {$\OO_3$};
\node[fill=gray!40,circle,draw,thick] (O4) at (4,0) {$\OO_4$};

\draw[->,loop left] (O0) to (O0);
\draw[->,loop above] (O0) to (O0);
\draw[->] (O0) to (O1);

\draw[->] (O1) to [out=150,in=30] (O0);
\draw[->] (O1) to [out=210,in=330] (O0);
\draw[->] (O1) to (O2);

\draw[->] (O2) to [out=150,in=30] (O1);
\draw[->] (O2) to (O4);
\draw[->, loop above] (O2) to (O2);

\draw[->] (O4.east) to [out=0,in=270] (O2.south);
\draw[->, loop below] (O4) to (O4);
\draw[->] (O4) to (O3);

\draw[->] (O3) to [out=30,in=150] (O4);
\draw[->,loop below] (O3) to (O3);
\draw[->,loop left] (O3) to (O3);

\end{tikzpicture}
\caption{Graph of level-3 Eichler orders in $B_{37,\infty}$ with connecting ideals of norm 2. }\label{fig:p37N3ell2Quat}
    \end{subfigure}
\caption{}
\end{figure}

\bibliographystyle{plain}
\bibliography{biblio}\vspace{0.75in}

\end{document}